\documentclass[11pt]{article}
\usepackage{amsmath,amssymb,enumerate,bbm}%,showkeys}%showkeys,wasysym,pslatex,

%%%%%%%%%% Start TeXmacs macros
\newcommand{\op}[1]{#1}
\newcommand{\tmem}[1]{{\em #1\/}}
\newcommand{\tmmathbf}[1]{\ensuremath{\boldsymbol{#1}}}
\newcommand{\tmname}[1]{\textsc{#1}}
\newcommand{\tmop}[1]{#1} %{\ensuremath{\operatorname{#1}}}
\newcommand{\tmstrong}[1]{\textbf{#1}}
\newcommand{\tmtextbf}[1]{{\bfseries{#1}}}
\newcommand{\tmtextit}[1]{{\itshape{#1}}}
\newcommand{\tmtextrm}[1]{#1}%{{\rmfamily{#1}}}
\newcommand{\tmtextsc}[1]{{\scshape{#1}}}
\newcommand{\tmtexttt}[1]{{\ttfamily{#1}}}
\newcommand{\tmtextup}[1]{{\upshape{#1}}}
\newenvironment{enumeratealpha}{\begin{enumerate}[a{\textup{)}}] }{\end{enumerate}}
\newenvironment{enumeratenumeric}{\begin{enumerate}[1.] }{\end{enumerate}}
\newenvironment{proof}{\noindent\textbf{Proof\ }}{\hspace*{\fill}$\Box$\medskip}
%\definecolor{grey}{rgb}{0.75,0.75,0.75}
%\definecolor{orange}{rgb}{1.0,0.5,0.5}
%\definecolor{brown}{rgb}{0.5,0.25,0.0}
%\definecolor{pink}{rgb}{1.0,0.5,0.5}
\newtheorem{theorem}{Theorem}[section]
\newtheorem{corollary}[theorem]{Corollary}
\newtheorem{definition}[theorem]{Definition}
\newtheorem{lemma}[theorem]{Lemma}
%{\theorembodyfont{\rmfamily}\newtheorem{remark}{Remark}}
\newtheorem{varremark}[theorem]{Remark}

%%%%%%%%%% End TeXmacs macros
\def\ed{\end{document}}
\def\LBox{\large{\Box}}
\setlength{\textwidth}{13.5cm} \setlength{\oddsidemargin}{1.5cm}
\setlength{\evensidemargin}{1.5cm}
\def\nod{\noindent}
\def\bm{\boldmath}
\def\nb{\bar{\nu}}
\begin{document}

\title{Global Square and Mutual Stationarity at the $\aleph_n$}\author{Peter Koepke \\
Dept. of Mathematics,\\
University of Bonn,\\
Beringstrasse 1,\\
D-53115 Bonn,\\
Germany\and Philip Welch\thanks{The second author would like to
express his gratitude to the Deutsche Forschungsgemeinschaft \ for
the support of a Mercator Gastprofessur and to the Mathematics
Department of the University of Bonn where it was held.}\\ School of
Mathematics,\\ University of Bristol,\\Bristol, BS8 1TW\\England}
\maketitle

\begin{abstract}
  We give a proof of a theorem of \tmtextsc{Jensen and Zeman} on the existence
  of Global {\large$\Box$} in the Core Model below a measurable cardinal $\kappa$
  of Mitchell order
  (``$o_M (\kappa)$'') equal to $ \kappa^{+ +}$, and use it to prove the following theorem on
  mutual stationarity at the $\text{$\aleph_n$}$.

  Let $\omega_1$ denote the first uncountable cardinal of $V$  and
  set $\mathrm{Cof}(\omega_1)$ to be the class of ordinals of cofinality
  $\omega_1$.

  \tmtextbf{Theorem: }{\tmem{If every sequence $(S_n)_{n < \omega}$ of
  stationary sets $S_n \subseteq \mathrm{Cof}(\omega_1) \cap \aleph_{n + 2}$,
  is mutually stationary, then there is an inner model with infinitely many
  inaccessibles $(\kappa_n)_{n < \omega}$ so that for every $m$ the class of
  measurables $\lambda$ with $o_M(\lambda) \geq\kappa_m$ is stationary in $\kappa_n$ for all
  \ $n > m.$ In particular, there is such a model in which for all
  sufficiently large $m < \omega$ the class of measurables $\lambda$
  with
  $o_M(\lambda) \geq \omega_{m}$ is, in $V$, stationary below $\aleph_{m + 2}$.}}

 % This strengthens earlier work of the authors that showed that the existence
 % of a sequence $(\kappa_n)_{n < \omega}$ \ for which the above assumption of
 % mutual stationarity held implied simply the existence of an inner model with
 % just a single measurable cardinal.
\end{abstract}

%\end{document}
\section{Introduction}

This paper extends previous investigations into the nature of
\tmtextit{mutual stationarity}, a concept introduced by {\tmname{M.
Foreman}} and {\tmname{M. Magidor}} {\cite{Foreman-Magidor2002}} in
order to transfer some combinatorial aspects of stationary subsets
of regular cardinals to singular cardinals. They made particular use
of this in investigating the non-saturation of the non-stationary
ideals of the form $\mathcal{P}_{\kappa} (\lambda)$.

Our purpose here is to establish that the mutual stationarity
property at $\aleph_{\omega}$ (or more precisely at the sequence of
the first $\omega$-many uncountable cardinals, $\langle \aleph_n
\mid 0 < n < \omega \rangle$), is a {\tmem{large cardinal}}
property, that is, it entails the consistency of {\tmem{strong
axioms of infinity}} which concern measurable cardinals. The
definition of mutual stationarity is more general than this however:

\begin{definition}
  Let $(\kappa_n)_{n < \omega}$ be a strictly increasing sequence of regular
  cardinals $\geqslant \aleph_2$ with $\kappa_{\omega} =$sup$_{n < \omega}
  \kappa_n$. A sequence $(S_n)_{n < \omega}$ is called \tmtextup{mutually
  stationary in} $(\kappa_n)_{n < \omega}$ if every first-order structure
  $\mathfrak{A}$ of countable type with $\kappa_{\omega} \subseteq
  \mathfrak{A}$ has an elementary substructure $\mathfrak{B} \prec
  \mathfrak{A}$ such that  $$\forall n < \omega \sup |\mathfrak{B}| \cap
  \kappa_n \in S_n .$$
\end{definition}

{\tmname{M. Foreman}} and {\tmname{M. Magidor,}} together with
{\tmname{J. Cummings}} further investigated the status of such
sequences in {\cite{CFM06}}. Note that if $(S_n)_{n < \omega}$ is
mutually stationary in $(\kappa_n)_{n < \omega}$ then each $S_n \cap
\kappa_n$ is stationary in $\kappa_n$. In the following we shall
denote the class $\{\xi \in \tmop{Ord} \mid \tmop{cf} (\xi) =
\lambda\}$ by Cof$(\lambda)$.

\begin{definition}
  Let $(\kappa_n)_{n < \omega}$ be a strictly increasing sequence of regular
  cardinals and $\lambda < \kappa_0$, $\lambda$ regular. The
  {\tmem{{\tmem{\tmtextup{mutual stationarity property}}}}}
  $\tmtextrm{\tmop{MS}} ((\kappa_n)_{n < \omega}, \lambda)$ is the statement:
  if $(S_n)_{n < \omega}$ is a sequence of stationary sets $S_n \subseteq
  \tmtextrm{\tmop{Cof}}(\lambda) \cap \kappa_n$, \ then $(S_n)_{n < \omega}$
  is{\tmem{ {\tmem{mutually stationary}}}} in $(\kappa_n)_{n <
  \omega}$.{\tmname{}}
\end{definition}

{\tmname{M. Foreman}} and {\tmname{M. Magidor}}
{\cite{Foreman-Magidor2002}} proved the following two theorems:\\

{\noindent}{\tmstrong{Theorem.}} {\tmem{For $(\kappa_n)_{n <
\omega}$ be any strictly increasing sequence of uncountable regular
cardinals:}\\ (i) $\tmtextrm{\tmop{MS}} ((\kappa_n)_{n < \omega},
\omega)$
\tmem{holds.}} \\
\nod (ii) $\tmtextrm{\tmop{MS}} ((\kappa_n)_{n < \omega},
\omega_1)${\tmem{ implies}} $V \neq L$.\\

This did not yet say that $\tmop{MS}$ was a large cardinal property.
That it was is the left to right direction of the following
equivalence, proven in \cite{KW05}:

\begin{theorem}
  \label{Equiconsistency}The theories $\tmtextrm{\tmop{ZFC}} + \exists
  (\kappa_n)_{n < \omega}  \tmtextrm{\tmop{MS}} ((\kappa_n)_{n < \omega},
  \omega_1)$ and $\tmtextrm{\tmop{ZFC}} + \exists \kappa( \kappa$
  measurable$)$
  are equiconsistent.
\end{theorem}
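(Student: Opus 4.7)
The plan is to prove the two directions of the equiconsistency separately.

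\emph{From a measurable cardinal to MS.} Starting in a model with a measurable $\kappa$, I would force with a Prikry- or Magidor-style iteration to add a cofinal $\omega$-sequence $\langle \kappa_n : n < \omega\rangle$ of $V$-regular cardinals in $\kappa$. In such an extension, the normal measure on $\kappa$, together with iterated ultrapowers, can be used to realize simultaneously any given sequence $(S_n)_n$ of stationary sets of $\omega_1$-cofinal points in $\kappa_n$ inside a single elementary submodel. This follows the template of the Foreman--Magidor construction in \cite{Foreman-Magidor2002}.

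\emph{From MS to a measurable.} I would argue by contraposition. Assume that no inner model contains a measurable cardinal. Then the Dodd--Jensen core model $K$ exists, Jensen's covering lemma holds between $V$ and $K$, and by the Global $\Box$ theorem proved in the preceding sections, $K$ satisfies Global $\Box$. Fix a global $\Box$-sequence $\langle C_\alpha \rangle$ in $K$. Using this, I would construct, for any candidate increasing $\omega$-sequence of $V$-regular cardinals $(\kappa_n)_n$ above $\aleph_2$, a sequence of stationary sets $S_n \subseteq \mathrm{Cof}(\omega_1) \cap \kappa_n$ that fails to be mutually stationary.

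The construction uses $\Box$ as a coding device. For each $n$, define a function $f_n : \mathrm{Cof}(\omega_1) \cap \kappa_n \to \kappa_n$ from invariants of $C_\alpha$ --- for instance, the order type of $C_\alpha$ modulo a fixed cardinal, or the $\omega_1$-th element of $C_\alpha$. Jensen covering together with the absoluteness of $V$-cofinality $\omega_1$ on the relevant domain ensures that the level sets $f_n^{-1}\{\xi\}$ of interest remain stationary in $V$. I would choose $S_n := f_n^{-1}\{\xi_n\}$ for values $\xi_n$ selected so that the resulting system is globally inconsistent.

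The main obstacle is the final step. Suppose $\mathfrak{B}$ is an elementary submodel with $\alpha_n := \sup(\mathfrak{B} \cap \kappa_n) \in S_n$ for all $n$. Since the $\alpha_n$ live at different cardinals, they are not \emph{a priori} linked by the coherence of $\Box$, so coherence must be extracted from the elementarity of $\mathfrak{B}$. This will use both the global nature of $\Box$ (which yields compatibility of the $C$-sequence across \emph{all} singular ordinals of $K$) and the fact that $|\mathfrak{B} \cap \kappa_n| \geq \omega_1$, so $\mathfrak{B}$ sees a cofinal $\omega_1$-sequence in each $\alpha_n$ from which local information about $C_{\alpha_n}$ is legible. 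The hypothesis that cofinality is $\omega_1$, rather than $\omega$, is also what prevents the argument from contradicting the Foreman--Magidor result at $\omega$-cofinality.
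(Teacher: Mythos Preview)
Your high-level outline is correct: the right-to-left direction is indeed obtained by Prikry forcing (this is due to Cummings, Foreman, and Magidor, not Foreman--Magidor), and the left-to-right direction uses the Dodd--Jensen core model $K$, the Covering Lemma, and Global $\Box$ in $K$. But your sketch of the forward direction has a genuine gap precisely where you flag it, and your proposed mechanism for closing it is not the one that works.

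The difficulty you identify is real: the ordinals $\alpha_n = \sup(\mathfrak{B}\cap\kappa_n)$ live at different cardinals and are \emph{not} linked by the coherence of $\Box$, and no amount of elementarity of $\mathfrak{B}$ will make them so. The actual argument (both in \cite{KW05} and in the stronger form carried out in this paper) does not try to compare the $C_{\alpha_n}$ to one another. Instead one takes $S_n$ to be the set of $\beta\in\mathrm{Cof}(\omega_1)\cap\kappa_n$ with $\mathrm{ot}(C_\beta)$ \emph{large} (at least some threshold growing with $n$); stationarity of these sets follows from Covering plus a short argument with the $\Box$-coherence (Theorem~\ref{3.1}). One then transitively collapses $X=\mathfrak{B}$ to get $\overline{K}$, coiterates $K$ with $\overline{K}$, and shows that the $\overline{K}$-side does not move. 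The iterates $M_{i_n}$ on the $K$-side are then singularising structures for the collapsed $\beta_n$, and the explicit order-type bound of Lemma~\ref{ordertype}, together with the fact that a single small hull is cofinal in the relevant projectum for \emph{all} $n$, gives a uniform bound on $\mathrm{ot}(C_{s_n})$. Lifting up via pseudo-ultrapowers (Theorem~\ref{kpseudo}) transfers this bound to $\mathrm{ot}(C_{\alpha_n})$ in $K$, contradicting membership in $S_n$ for large $n$. Your ``level set'' idea with specific values $\xi_n$ and an appeal to coherence does not supply this mechanism; what is needed is the coiteration analysis and the explicit control over order types coming from the construction of $\Box$, not from its coherence clause.
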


The implication from right to left was first proven by
{\tmname{Cummings}}, {\tmname{Foreman}}, and {\tmname{Magidor}}
{\cite{CFM01}} {\tmem{via}} Prikry forcing. They proved more than
this: they showed that a tail of the Prikry generic sequence
satisfies $\tmtextrm{\tmop{MS}} ((\kappa_n)_{n < \omega}, \lambda)$
for any $\lambda < \kappa_0$ (or indeed the mutual stationarity of
{\tmem{any}} sequence of stationary sets $S_n \subseteq \kappa_n$
irrespective of the cofinalities of the ordinals in the $S_n$). This
is essentially obtained by utilising the fact that a tail of the
Prikry generic sequence remains {\tmem{coherently Ramsey}} in the
generic extension. The forward direction was proven in \cite{KW05}
using the core model $K$ of {\tmname{A. J. Dodd}} and {\tmname{R. B.
Jensen}} (see {\cite{DoJe81}}). The deduction of the existence of
$0^{\sharp}$ from $\tmtextrm{\tmop{MS}} ((\kappa_n)_{n < \omega},
\omega_1)$ was done in detail, and the extension to proving the
existence of the inner model with a measurable was sketched, using
the hyperfine structure of {\tmname{S.Friedman}} and the first
author ({\cite{FK97}}). The proof involved the global square
principle $\square$ in $L$ and techniques from the Jensen Covering
theorem for $L$ (see {\cite{Devlin-Jensen}}). The purpose of this
paper is to give a full account of the interaction of the proof of
global $\LBox$ with the MS property, (insofar as we are able) thus
filling in the details of the above argument, but significantly
strengthening the result to obtain models with many measures of high
Mitchell order, in the case $(\kappa_n)_{n < \omega}$ consists of
consecutive sequences of cardinals mentioned in the abstract:

\begin{theorem}
  \label{Main}If $\tmtextrm{\tmop{MS}} ((\aleph_{n })_{1<n <
  \omega}, \omega_1)$ holds then there is an inner model, $K$, and there is $2< k
  < \omega$ so that for any $n$ with $k < n < \omega$ each $\aleph_{n}$ is a
  Mahlo limit (in $V$) of ordinals $\kappa$ which are, in $K$, measurable of
  Mitchell order $o_M (\kappa) = \omega_{n-2 }$. In fact, for such $\aleph_{n}$
  the ordinals $\alpha \in {\mathrm{Cof}}(\omega_{n -2})$ which are singular in
  $K$ are, in $V$, non-stationary below $\aleph_{n}$.
\end{theorem}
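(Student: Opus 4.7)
The plan is to argue by contradiction, combining the Jensen--Zeman Global Square Theorem (proved in the first part of the paper) with the hypothesized MS property. Suppose $\mathrm{MS}((\aleph_n)_{1<n<\omega},\omega_1)$ holds but the conclusion fails, so that for every $k$ there is some $n>k$ at which $\aleph_n$ is \emph{not} a Mahlo limit in $V$ of ordinals $\kappa$ that are measurable in the appropriate core model $K$ with $o_M(\kappa)=\omega_{n-2}$. I would set up $K$ as the core model for sequences of measures below ``$o_M(\kappa)=\kappa^{++}$'', so that the hypotheses of Jensen--Zeman are met and global $\square$ holds in $K$. The failure just described means that on a stationary piece of $\aleph_n$ (for arbitrarily large $n$), the points are either singular in $K$ or carry fewer measures in $K$ than advertised; on these points the square sequence is available without obstruction.

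Using the global $\square$-sequence of $K$, I would then build, for sufficiently large $n$, a stationary set $S_n\subseteq \mathrm{Cof}(\omega_1)\cap\aleph_n$ whose elements $\alpha$ are singular in $K$ and come equipped with a canonical cofinal club $C_\alpha\subseteq\alpha$ of $K$-ordertype strictly less than $\aleph_n$. Suppose $\mathfrak{B}$ is an elementary substructure witnessing mutual stationarity of $(S_n)_{1<n<\omega}$, and set $\gamma_n:=\sup(\mathfrak{B}\cap\aleph_n)$. Since the global $\square$-sequence lives in $K$ and can be coded into the structure $\mathfrak{A}$, coherence of the $\square$-sequence forces $C_{\gamma_n}\cap\mathfrak{B}$ to be unbounded in $\gamma_n$, hence $\mathrm{cf}^V(\gamma_n)$ to equal the $V$-cofinality of the ordertype of $C_{\gamma_n}$. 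Tracking how this ordertype depends on $n$ and on the level of Mitchell order assumed absent at stage $n$ produces mutually incompatible cofinality demands on $\mathfrak{B}$ across consecutive $n$, contradicting $\gamma_n\in S_n$ for all $n$. This is the adaptation, at $\aleph_\omega$, of the covering-style blocking argument from \cite{KW05}, upgraded to run simultaneously against global $\square$ at many levels.

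The ``in fact'' clause follows from the same machinery run in one block: if, at some $\aleph_n$ exhibiting the desired measurables, the set of $\alpha\in\mathrm{Cof}(\omega_{n-2})$ that are singular in $K$ were stationary below $\aleph_n$, I would take this set as $S_n$, complete the sequence with arbitrary stationary $S_m\subseteq\mathrm{Cof}(\omega_1)\cap\aleph_m$ for $m\neq n$, and apply MS; global $\square$ on the relevant tail then produces the same blocking contradiction. The principal obstacle I anticipate is the middle step: synchronizing the $\square$-blocking sets $S_n$ across all (sufficiently large) $n$ so that a \emph{single} countable-type substructure $\mathfrak{B}$ is actually forced into incompatibility. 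One has to monitor precisely which $\square$-sequences in $K$ remain usable after restricting to points of the required Mitchell-order failure, and how much of the vertical stack of such sequences can be absorbed into the Skolem hull of a size-$\omega_1$ submodel; this bookkeeping is exactly what determines the threshold $k$ and the Mitchell-order parameter $\omega_{n-2}$ that appear in the statement.
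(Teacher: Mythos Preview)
Your outline has the right skeleton (assume failure, invoke Weak Covering to get $K$-singulars of the right cofinality, use global $\square$ in $K$, apply MS, derive a contradiction), but the mechanism you propose for the contradiction is not the one that works, and the technical core of the paper's argument is missing from your plan.

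First, the stationary sets. You say you will take $S_n$ to consist of $\alpha\in\mathrm{Cof}(\omega_1)$ that are $K$-singular with $C_\alpha$ of ordertype ``strictly less than $\aleph_n$''. That last condition is automatic for a square sequence and carries no information. What the paper does (Theorem~\ref{3.1}) is the opposite: from a stationary set of $K$-singulars in $\mathrm{Cof}(\omega_{n-2})$ one produces $T_n\subseteq\mathrm{Cof}(\omega_1)\cap\omega_n$ on which $\mathrm{ot}(C_\beta)\geq\omega_{n-3}$, i.e.\ the order types are \emph{large}. The eventual contradiction is that these order types are in fact uniformly bounded by some fixed $\omega_{k_0+1}$.

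Second, your proposed link between $\mathfrak{B}$ and $C_{\gamma_n}$ is the real gap. You write that ``coherence of the $\square$-sequence forces $C_{\gamma_n}\cap\mathfrak{B}$ to be unbounded in $\gamma_n$''. Coherence says nothing of the sort: $\gamma_n=\sup(\mathfrak{B}\cap\aleph_n)$ is not in $\mathfrak{B}$, and the $K$-level computing $C_{\gamma_n}$ is not a priori related to the hull. The paper closes this gap by an entirely different route: collapse $\mathfrak{B}$ to $\bar H$ with its internal core model $\bar K$, then \emph{coiterate} $K$ with $\bar K$. A substantial argument (steps~(1)--(5) of the proof of Theorem~\ref{0-sharp}) shows the $\bar K$-side is trivial, that the $K$-side truncates and then iterates, and that the iterate $M_{i_n}$ is an $S^+$-singularising structure for $\beta_n=\pi^{-1}(\omega_n)$. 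Step~(3) in particular uses the full $\mathrm{MS}$ hypothesis (all $\aleph_n$, not a subsequence) to rule out a cofinal tower of critical points below $\beta_n$. One then forms the pseudo-ultrapower lift-up $\pi_n^\ast:M_{i_n}\to M_n^\ast$ and proves $M_n^\ast$ is exactly the $K$-level assigned to $\beta_n^\ast=\gamma_n$ in the global $\square$ construction. Only after this identification can one read off a bound on $\mathrm{ot}(C_{\gamma_n})$: the order-type lemma (Lemma~\ref{ordertype}) together with preservation of the $d$-parameters under iteration (Lemma~\ref{dpreserve}) gives $\mathrm{ot}(C_{\gamma_n})<\omega_{k_0+1}$ for a fixed $k_0$, contradicting $\gamma_n\in T_n$ for large relevant $n$. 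None of this coiteration/lift-up machinery appears in your plan, and without it there is no way to compute or bound $C_{\gamma_n}$ from the data in $\mathfrak{B}$.

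Finally, your ``in fact'' paragraph proposes to handle a single bad $n$ in isolation by padding the other coordinates arbitrarily. The paper's argument does not work that way: the contradiction requires infinitely many relevant $n$ so that the fixed bound $\omega_{k_0+1}$ is eventually exceeded by $\omega_{n-3}$. The ``in fact'' clause is really a restatement of the main conclusion via Weak Covering, not a separate argument.
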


One might wonder whether increasing the cofinality of the independently chosen
stationary sets might yield increased Mitchell order. Well, perhaps, but
seemingly not by our methods. The following is a corollary to the proof of the
above theorem.

\begin{corollary}
  Let $m$ be fixed, $1 \leq m < \omega$. Then if $\tmtextrm{\tmop{MS}}
  ((\aleph_{n + m })_{0<n < \omega}, \omega_m)$ holds, exactly the same
  conclusion as that of Theorem \ref{Main} may be drawn.
\end{corollary}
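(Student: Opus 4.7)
The plan is to adapt the proof of Theorem \ref{Main} with shifted parameters: wherever the original argument invokes $\mathrm{MS}((\aleph_n)_{1<n<\omega},\omega_1)$ to produce an elementary substructure $\mathfrak{B}\prec\mathfrak{A}$ (where $\mathfrak{A}$ is a suitable structure of countable type containing $\aleph_\omega$) with prescribed $\omega_1$-cofinal traces on each $\aleph_n$, we instead invoke the hypothesis $\mathrm{MS}((\aleph_{n+m})_{0<n<\omega},\omega_m)$ to obtain a substructure whose trace on each $\aleph_{n+m}$ has cofinality $\omega_m$ inside a chosen stationary set. Since $\omega_m\geq\omega_1$, every such trace contains a cofinal $\omega_1$-subsequence, which again lies in $\mathfrak{B}$ by taking a suitable Skolem hull; this is precisely the raw material consumed by the global $\square$-diagonal in $K$ and the subsequent Mitchell-order extraction.

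The one substantive adjustment is bookkeeping. Stationary sets in the original proof are chosen inside each $\aleph_n$, whereas now they sit inside the cofinal subsequence $\{\aleph_{n+m}:n<\omega\}$ of the $\aleph$-hierarchy. Consequently the threshold index $k$ of Theorem \ref{Main}, above which the Mahlo-limit conclusion holds, must be redefined to absorb the shift by taking, say, $k':=\max(k,m+2)$. But the final statement --- each $\aleph_n$ for $n>k'$ is a Mahlo limit in $V$ of ordinals $\kappa$ which are, in $K$, measurable of Mitchell order $\omega_{n-2}$ --- retains exactly the form stated in Theorem \ref{Main}, since the remaining $\aleph_n$'s between the old and new thresholds are finite in number and can be absorbed into the (larger) exceptional initial segment.

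The main obstacle is conceptual rather than technical: one must verify that the $\square$-diagonal and the Mitchell-order counting in $K$ draw no additional strength from the higher cofinality $\omega_m$ in the input stationary sets. What the argument genuinely uses is the presence, inside $\mathfrak{B}$, of an $\omega_1$-cofinal thread through each trace together with the ambient elementarity; neither ingredient is enhanced by raising the cofinality from $\omega_1$ to $\omega_m$. This is precisely the limitation noted in the remark preceding the corollary: our methods convert a cofinal $\omega_1$-sequence into Mitchell order $\omega_{n-2}$ but do not leverage higher-cofinality threads to produce higher Mitchell order. Hence the conclusion matches Theorem \ref{Main} verbatim.
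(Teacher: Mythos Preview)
Your high-level plan --- rerun the proof of Theorem \ref{0-sharp} with shifted parameters --- is correct and is exactly what the paper does. But the specific mechanism you propose, namely ``extract a cofinal $\omega_1$-subsequence from the $\omega_m$-cofinal trace and feed that into the original argument'', does not work, and your diagnosis that ``what the argument genuinely uses is the presence of an $\omega_1$-cofinal thread'' misidentifies the load-bearing step.

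The crux is step (3) in the proof of Theorem \ref{0-sharp}. There one assumes a single critical point $\kappa_{i_0}$ is iterated cofinally into $\beta_n$, extracts a closed subsequence $\bar D$ of critical points \emph{unbounded in $\beta_n$}, and argues that $\pi$ is continuous on $\bar D$, so that $D=\pi``\bar D$ is a club of $K$-inaccessibles below $\beta_n^\ast$ which must meet the $\square$-sequence $C_{\beta_n^\ast}$ of $K$-singulars. Two things matter: $\bar D$ must be cofinal in $\beta_n$, and $\pi$ must be continuous at its limit points. When $\mathrm{cf}(\beta_n)=\omega_m>\omega_1$, an $\omega_1$-sequence is \emph{bounded} in $\beta_n$, so its image cannot be a club below $\beta_n^\ast$ and no contradiction follows. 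One is forced to take $\bar D$ of order type $\omega_m$. Continuity of $\pi$ at its limit points (which now have cofinality ${<}\,\omega_m$, not merely $\omega$) then requires that $\bar H$ be correct about cofinalities below $\omega_m$; this is arranged by taking $\omega_m\subseteq X$ rather than $\omega_2\subseteq X$.

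So the actual adaptation is: redefine the test sets $T_n$ inside $\mathrm{Cof}(\omega_m)$ (Theorem \ref{3.1} goes through unchanged), take $\omega_m\subseteq X$, so each $\beta_n$ has cofinality $\omega_m$ and $\bar H$ is correct about all cofinalities ${<}\,\omega_m$; in step (3) extract a closed $\omega_m$-subsequence and use this broader continuity of $\pi$. Everything from step (4) onward is unchanged. Your observation that the higher cofinality buys no extra Mitchell order is correct, but the reason is that the order-type bound in (6) and (10) traces back to $(\ast)$ in (4), which is insensitive to $\mathrm{cf}(\beta_n)$ --- not that the argument secretly runs on an $\omega_1$-thread.
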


The methods here seem just short of allowing us to conclude that there is an
inner model with a measurable $\kappa$ with Mitchell order of $\kappa$equal to
$\kappa :$ (``$o_M (\kappa) = \kappa$'').

It is important in the above statement that we use all the alephs
below $\aleph_{\omega}$ (from some point on) since the first author
has shown that omitting a cardinal above each one for which we wish
to consider arbitrary stationary sets, has a much weaker consistency
strength, (see {\cite{Ko07}}).

\begin{theorem}
  The theories $\tmtextrm{\tmop{ZFC}} + \tmtextrm{\tmop{MS}} ((\aleph_{2 n +
  1})_{n < \omega}, \omega_1)$ and $\tmtextrm{\tmop{ZFC}} + \exists \kappa
  (\kappa$ a measurable cardinal$)$ are equiconsistent.
\end{theorem}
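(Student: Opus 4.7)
The plan is to prove both directions of the equiconsistency. For the consistency direction (measurable implies $\mathrm{MS}((\aleph_{2n+1})_{n<\omega}, \omega_1)$ is consistent), I would start from a ground model $V$ containing a measurable cardinal $\kappa$ and force with standard Prikry forcing $P_\kappa$ to produce a cofinal $\omega$-sequence $(\kappa_n)_{n<\omega}$. By the result of \tmname{Cummings}, \tmname{Foreman} and \tmname{Magidor} mentioned in the introduction, a tail of this sequence is coherently Ramsey in $V[G]$ and hence already satisfies the full mutual stationarity property. I would then follow the Prikry step with an iteration of Levy collapses $\mathrm{Coll}(\kappa_{n-1}^+, {<}\kappa_n)$ for $n \geq 1$, arranged so that each $\kappa_n$ becomes $\aleph_{2n+1}$ and each $\kappa_n^+$ becomes $\aleph_{2n+2}$ in the final extension. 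The critical structural feature is that the ``gap'' cardinal $\aleph_{2n+2}$ is available for absorbing the collapse and lies strictly between $\aleph_{2n+1}$ and $\aleph_{2n+3}$, so the collapses do not disturb the coherent-Ramsey structure at the odd alephs; a master-condition style argument then transfers MS at $(\kappa_n)_{n<\omega}$ for cofinality $\omega_1$ to $\mathrm{MS}((\aleph_{2n+1})_{n<\omega}, \omega_1)$ in the final extension.

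For the consistency-strength direction, I would argue by contradiction: assume no inner model has a measurable, so the Dodd-Jensen core model $K = K^{DJ}$ is defined, satisfies global $\square$, and the Dodd-Jensen covering lemma controls $V$-cofinalities in terms of $K$-cofinalities. Following the template of \cite{KW05}, I would define, for each $n$, a stationary set $S_n \subseteq \mathrm{Cof}(\omega_1) \cap \aleph_{2n+1}$ by reading off from the global square sequence $\langle C_\alpha \mid \alpha \in \mathrm{Sing}^K\rangle$ a parameter (for example, the residue modulo a fixed partition of the order type of $C_\alpha$, or a canonical code of the cofinal type of the collapse of $C_\alpha$). The coherence of $\square$ forces any elementary substructure $\mathfrak B$ to make compatible choices across different $n$; by choosing the $S_n$ to encode mutually incompatible parameters, one produces a counter-example to $\mathrm{MS}((\aleph_{2n+1})_{n<\omega}, \omega_1)$.

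The main obstacle in the forward direction is precisely the weakening of the hypothesis: since we are only given MS at every other aleph, the counter-example must be robust against the full freedom of $\mathfrak B$ to take arbitrary intersections with $\aleph_{2n+2}$, so the incompatible parameter must be readable purely from the odd alephs. This is not automatic from the square sequence but requires combining the global coherence of $\square$ with the fact that covering forces $V$-ordinals of cofinality $\omega_1$ in $\aleph_{2n+1}$ to have $K$-cofinality $\omega_1$ on a club, so that the square-clubs $C_\alpha$ used in the definition of $S_n$ have $V$-cofinal order type $\omega_1$. In the reverse direction the main technical point is verifying preservation of mutual stationarity under the iterated collapse; this reduces to the closure and size properties of the factors $\mathrm{Coll}(\kappa_{n-1}^+, {<}\kappa_n)$, which are engineered precisely to interact cleanly with the Prikry indiscernibility structure and to make the even alephs absorb all collapse-added material, leaving the odd alephs free.
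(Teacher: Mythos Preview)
The paper does not prove this theorem; it is quoted from \cite{Ko07}. So there is no in-paper argument to compare against in detail, but the surrounding discussion makes clear where the content lies, and on that point your proposal has the emphasis backwards.

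The lower-bound direction is \emph{not} where the work is. If $\mathrm{MS}((\aleph_{2n+1})_{n<\omega},\omega_1)$ holds, then in particular $\exists(\kappa_n)_{n<\omega}\,\mathrm{MS}((\kappa_n)_{n<\omega},\omega_1)$ holds, and Theorem~\ref{Equiconsistency} (from \cite{KW05}) already gives an inner model with a measurable. That argument is insensitive to whether the $\kappa_n$ are all the alephs or only the odd ones; your worry that ``the counter-example must be robust against the full freedom of $\mathfrak{B}$ to take arbitrary intersections with $\aleph_{2n+2}$'' is misplaced, since the square-based construction of a bad sequence in \cite{KW05} needs only \emph{some} increasing $\omega$-sequence of regular cardinals. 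The whole point of stating Theorem~1.6 separately in the paper is the contrast with Theorem~\ref{Main}: when you use \emph{all} the $\aleph_n$, the argument of Section~4 pushes the strength up to many measurables of high Mitchell order, whereas with gaps at the even alephs one cannot do better than a single measurable. So the gaps do not make the lower bound harder; they make a \emph{stronger} lower bound impossible.

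Consequently the entire content of the theorem is the forcing direction, and your outline there is along the right lines and presumably close to \cite{Ko07}: Prikry-force at a measurable, then interleave L\'evy collapses so that the Prikry points land at the odd alephs and the even alephs absorb the collapses. What you should foreground as the genuine issue is not a ``master-condition style argument'' transferring MS wholesale, but rather the verification that after the collapses the relevant combinatorial property (coherent Ramseyness, or an equivalent reflection property) survives at the odd alephs precisely because each collapse factor is small relative to the next Prikry point and is separated from it by the buffer cardinal $\aleph_{2n+2}$. That is where the hypothesis ``odd alephs only'' is actually used.
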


It is unknown whether $\tmtextrm{\tmop{MS}} ((\aleph_{n })_{1< n <
\omega}, \omega_k)$ (for any $k\geq 1$), when taking all the
cardinals from some point on, is consistent relative to any large
cardinals. The model $K$ in Theorem \ref{Main} can be taken to be
the core model built using measures (partial or full) only on its
constructing extender sequence.

We shall need the following formulation of the Weak Covering Lemma
due to \tmtextsc{W.Mitchell} (\tmtextit{cf.} {\cite{Mihbk}})

\begin{theorem}\label{WCL}{\bf  (Weak Covering Lemma)}
  {\tmem{{\tmem{Assume there is no inner model with a measurable cardinal
  $\kappa$ with $o_M (\kappa) = \kappa^{+ +} .$ Let $\alpha$ be regular in $K$
  with $\omega_1 \leq \gamma = \tmop{cf} (\alpha) < \tmop{card} (\alpha)$.
  Then in $K$ we have $o_M (\alpha) \geq \gamma$.}}}}
\end{theorem}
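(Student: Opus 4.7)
I would argue by contradiction: suppose $o_M^K(\alpha) = \eta$ for some $\eta < \gamma$. Since $\gamma = \mathrm{cf}^V(\alpha) < \mathrm{card}(\alpha)$, fix a cofinal sequence $\langle \alpha_\xi : \xi < \gamma \rangle$ in $\alpha$ and a sufficiently large regular $\theta$. The first step is to construct a continuous increasing elementary chain $\langle N_\xi : \xi < \gamma \rangle$ of submodels of a rich expansion of $K_\theta$ (carrying predicates for the extender sequence and the fine-structural well-ordering of $K$) with $\gamma \cup \{\alpha, \alpha_\xi\} \subseteq N_\xi$, $|N_\xi| < \alpha$, and $\beta_\xi := \sup(N_\xi \cap \alpha) < \alpha$. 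Setting $H := \bigcup_{\xi < \gamma} N_\xi$ and letting $\pi : \bar H \to H \prec K_\theta$ be the inverse of the transitive collapse, one has $\bar\alpha := \pi^{-1}(\alpha) = \sup_\xi \pi^{-1}(\beta_\xi)$, so $\mathrm{cf}^V(\bar\alpha) = \gamma$, while $\bar\alpha$ is regular in $\bar H$ because $\alpha$ is regular in $K$.

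The next step is to coiterate $\bar H$ against $K$. Under the hypothesis excluding a measurable $\kappa$ with $o_M(\kappa) = \kappa^{++}$, the comparison apparatus of Mitchell applies: $\bar H$ is iterable, the coiteration terminates, and, crucially, on the $K$-side no extender is applied at a critical point $\leq \alpha$ (for otherwise $\alpha$ would be singularised in $K$). The $V$-cofinality $\gamma$ of $\bar\alpha$ must therefore be absorbed on the $\bar H$-side: cofinally in $\bar\alpha$, extenders with critical points tending to $\bar\alpha$ are applied along the iteration. Pulling these back through $\pi$ (together with the copy map of the iteration) produces $\gamma$-many $K$-extenders on $\alpha$ of strictly increasing Mitchell index, witnessing $o_M^K(\alpha) \geq \gamma$ and contradicting $\eta < \gamma$.

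The principal obstacle is the second step: one must show that the extenders pulled back by $\pi$ genuinely sit on the $K$-sequence \emph{at $\alpha$} (not at some smaller ordinal), and that their indices ascend coherently to length $\gamma$ rather than collapsing. This is the standard difficulty of converting cofinality data into Mitchell order, and it is exactly where the assumption $\lnot\exists\kappa(o_M(\kappa) = \kappa^{++})$ enters: it precludes the large-cardinal configurations in which an overlapping measure would let the comparison drift away from $\alpha$, and it is what makes $\bar H$ fully iterable so that the coiteration may be run at all. For the fine-structural details (iterability of $\bar H$, coherence of the indexing, and the precise lift-up of $\pi$ through the comparison tree) I would follow Mitchell's exposition in \cite{Mihbk}; the combinatorial core is the reflection above, which converts the chain $\langle N_\xi \rangle$ of length $\gamma$ into a Mitchell-increasing chain of length $\gamma$ at $\alpha$ in $K$.
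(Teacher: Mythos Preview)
The paper does not prove this theorem at all: it is stated as a quotation of Mitchell's Weak Covering Lemma and attributed to \cite{Mihbk}, with no argument given. So there is no proof in the paper to compare your proposal against.

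As to your sketch itself: the overall shape --- build an elementary substructure of size ${<}\,\alpha$ meeting $\alpha$ cofinally, collapse, coiterate against $K$, and read off extenders at $\alpha$ --- is indeed the standard architecture of such covering arguments. But two of your steps are stated too quickly to be correct as written. First, the claim that ``on the $K$-side no extender is applied at a critical point $\leq\alpha$, for otherwise $\alpha$ would be singularised in $K$'' is not right: applying an extender on the $K$-side with critical point below $\alpha$ does not by itself singularise $\alpha$ in $K$; the actual reason the $K$-side does not move (or moves only trivially) comes from universality of $K$ and the specific comparison lemma, not from that one-line observation. Second, the passage from ``extenders applied cofinally in $\bar\alpha$ on the $\bar H$-side'' to ``$\gamma$-many $K$-extenders \emph{on $\alpha$} of strictly increasing Mitchell index'' is the entire content of the theorem, and your sketch does not supply the mechanism: one must show that the relevant measures lift to total measures on $\alpha$ in $K$ (not merely partial ones, and not at some image ordinal), and that they are pairwise Mitchell-comparable in the right direction. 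You correctly flag this as the principal obstacle, but the resolution is not just bookkeeping --- it is where Mitchell's analysis of the comparison under the anti-large-cardinal hypothesis does its real work. Since the paper itself simply cites Mitchell, referring the details to \cite{Mihbk} is in fact exactly what the authors do.
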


We shall assume a development of the fine structure of such a core
model $K$, as can be found in {\tmname{M. Zeman}} {\cite{Z02}}. \
$K$ is thus a model of the form $L [E]$ with $E$ a sequence of
partial or full extenders in the manner of Zeman's book. However no
such extender requires any generator beyond that of its critical
point. We shall need to consider the proof of the existence of
global square $\LBox$ in such a model. This is known to hold,{\tmem{
cf}} {\cite{JeZe00}}. \ The fine structural notation we shall adopt
is that of the book (which is also that of the paper cited). The
indexing of extenders will be the Friedman-Jensen indexing whereby
an extender is placed on the $E$ sequence of a hierarchy at
precisely the successor cardinal of the image of the critical point
by that extender. Again this is following {\cite{JeZe00}}.

Jensen and Zeman's method of proof for global $\LBox$ is to define a
``smooth category'' of structures and maps from which it is known
that a global $\LBox$ sequence can be derived. This latter
{\tmem{derivation}} is purely combinatorial and so requires no
inspection of the fine structure of the original model. The burden
of their proof is the {\tmem{construction }}of the smooth category
itself. However that construction does not yield an explicit
computation for the order types of the various $C_{\nu}$ sequences.
(It is the latter derivation that does that). For our proof we need
to have a construction of global $\LBox$ where we can see (i) what
those order types will be and how they are arrived at; and (ii) that
order types for certain $C_{\nu}$-like sequences will (on a tail)
not be prolonged by iterations of the mouse from which they are
defined. We give a proof of Global $\LBox$ {\tmem{ab initio}}
directly without going through the smooth category. This is done in
Section 3. In section 2 we give some  fine structural lemmas that
form the hard work of Jensen and Zeman's account in {\cite{JeZe00}}
which establish the right forms of parameter preservation and
appropriate condensation lemmata.  We merely quote these as
Condensation Lemmas (I) and (II).  However in order to prove that
the order types of $C_{\nu}$ sequences are not prolonged by
iterations of the structure over which they are defined we need to
prove the preservation of the $d$-parameters of {\cite{JeZe00}}.
This is at Lemma \ref{dpreserve}.  The analysis of the Condensation
Lemmata apart,  we try to keep the rest of the proof as
self-contained as possible. The proofs of Lemmas \ref{4.3} and
\ref{initseg} in particular repeat the proofs of \cite{JeZe00} 4.3
and 4.5. These are key lemmata on the relationships between
singularising structures and the maps between them, and are, in the
$\Sigma^\ast$ terminology, the successors to \cite{BJW} Lemmas 6.15
and 6.18. From Definition \ref{Bplus} onwards this is an account
very much following that of \cite{BJW} (and which will be in the
forthcoming \cite{Wehbk}), but modestly dressed in the appropriate
$J_s$ mouse notation. In Section 4 we see how to use features of
this proof to get the main Theorem \ref{Main}; the reader who is
completely familiar with the $\square$ proof and wants to discover
the ideas in the application to mutual stationarity may wish to go
straight there.

%\section{A proof of Global $\square$ in $K$}\label{Ordertypes}

\section{Fine structural prerequisites}

For an acceptable $J$-structure $M$ we assume familiarity with the
notions of the uniformly defined $\Sigma_1$-Skolem function for $M$,
$h_M$ ,and of the class of parameter sequences $\Gamma^M$, and the
parameter sets $P^n_M, P_M, P_M^{\ast,}, R^n_M, R_M$, and
$R^{\ast}_M$. We shall write $\rho_M$ as usual for the
$\Sigma_1$-projectum of $M$. Similarly we shall write for the
\tmtextit{$n$+1'st projectum } $\rho^{n + 1}_M =_{\tmop{df}} \min
\{\rho_{M^{n, p}} \hspace{0.25em} | \hspace{0.25em} p \in \Gamma^n_M
\}$. We may assume that parameters are finite sets of ordinals. This
applies as well to the $n$\tmtextit{'th-standard parameter} and the
\tmtextit{standard parameter} denoted here $p^n_M, p_M$ respectively
for a structure $M$ as above. We wellorder $[\tmop{On}]^{< \omega} $
by $u <^{\ast} v \leftrightarrow \max (u \Delta v) \in v$. For $X
\subseteq \tmop{Ord}$ a set, we write $\tmop{ot} (X)$ for its order
type, and by $X^{\ast}$ we mean the set of limit points of $X$. Our
discussion of fine structure is entirely in the language of
$\Sigma_k^{(n)}$ relations due to Jensen (for which see \cite{Z02}
or \cite{Wehbk}). Boldface relations such as
$\tmmathbf{\Sigma}^{(n)}_1 (M)$ denote those definable using
parameters (in this case from $M$.)

\begin{definition}
  {\tmstrong{$(\Sigma_1^{(n)}$-{\em Skolem Functions)}}} Let $M$ be an acceptable
  $J$-structure, and let $p \in \Gamma^n_M$.

  (i) $h^{n, p}_M = h_{M^{n, p}}$;

  (ii) $\tilde{h}^n_M (w^n, x^0) = g_0 (g_1 \cdots g_{n - 1} ((w^n)_0,
  \langle (w^n)_1, x^0 (n - 1) \rangle) \cdots x^0 (0) \rangle)$ where, for $i
  \leq n$
  \[ g_i (\langle j, y^{i + 1} \rangle, p) = h_{M^{i, p \upharpoonright i}}
     (j, \langle y^{i + 1}, p (i) \rangle) . \]
\end{definition}

Then $g_i$ is uniformly lightface $\Sigma^{(i)}_1 (M)$ in the
variables shown. Thus $\tilde{h}^n_M$ is $\Sigma_1^{(n - 1)}$
uniformly over all $M$. The $\Sigma_1$ hull of a set $X \subseteq
M^{n, p}$ we shall denote by $h^{n, p}_M (X)$ (and is thus the set
$\{h^{n, p}_M (i, x)) \hspace{0.25em} | \hspace{0.25em} i \in
\omega, x \in X\}$). Note that $\tilde{h}^1_M (\langle j, y^0
\rangle, p (0)) = g_0 (j, \langle y, p (0) \rangle) = h_M (j,
\langle y, p (0) \rangle)$. If $p \in R^n_M$ then every $x \in M$ is
of the form $\tilde{h}^n_M (z, p)$ for some $z \in H^n_M$. We may
similarly form hulls using $\tilde{h}^n_M$: again if $X \subseteq
M^{n, p}$ say, and $q \in M$ then the $\Sigma_1^{(n - 1)}$ hull of
$X \cup \{q\}$ is the set $\{ \tilde{h}^n_M (x, q)) \hspace{0.25em}
| \hspace{0.25em} x \in X\}$ (we again may write $\tilde{h}^n_M (X
\cup \{q\})$ for this hull here). The following states some of these
facts and are easy to establish (see {\cite{Z02}} p.29):

\begin{lemma}
  \label{facts}Let $M$ be acceptable, and $p \in R^n_M$; {\tmem{{\tmem{\\
  (i{\tmem{) {\tmem{If $\omega \rho^n_M \in M$ and $p \in R^n_M$ then
  $\tilde{h}^n_M$ is a good, uniformly defined, $\Sigma_1^{(n - 1)} (M)$
  function mapping $\omega \rho^n_M$ onto}} $M.$}}\\
  (ii) (a) {\tmem{{\tmem{every $A \subseteq H^n_M$ which is}}
  $\tmmathbf{\Sigma}^{(n)}_1 (M)$ {\tmem{is}}}}}} }}{\tmem{}}
  $\tmmathbf{\Sigma_1} (M^{n, p})$;

  (b) $\rho^{n + 1}_M = \rho_{M^{n, p}}$
\end{lemma}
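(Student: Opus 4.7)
The plan is to prove (i), (ii)(a), and (ii)(b) together by induction on $n$, reading $\tilde{h}^n_M$ as one application of $g_{n-1}$ composed with the $\tilde{h}^{n-1}_{M^{1,p(0)}}$-style assembly of the inner calls. The case $n=0$ reduces to the standard properties of $h_M$. For the inductive step, each $g_i$ is uniformly $\Sigma_1(M^{i,p\upharpoonright i})$ by definition, hence uniformly lightface $\Sigma_1^{(i)}(M)$; closure of the $\Sigma_1^{(n-1)}$-definability class under substitution of such functions (for $i<n-1$) yields that $\tilde{h}^n_M$ is $\Sigma_1^{(n-1)}(M)$, and its ``goodness'' (correct domain predicate, total up to the effective domain) transfers from the goodness of each $g_i$. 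For surjectivity with domain $\omega\rho^n_M$, I unwind: $p\in R^n_M$ gives that every $x\in M$ equals $h_M(j,\langle y,p(0)\rangle)$ for some $j\in\omega$ and $y\in M^{1,p(0)}$; applying the inductive hypothesis inside $M^{1,p(0)}$ writes $y$ as $\tilde{h}^{n-1}_{M^{1,p(0)}}(z,p\upharpoonright[1,n))$ with $z\in H^n_M\subseteq \omega\rho^n_M$, and composing gives the desired preimage.

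For (ii)(a), the reduct $M^{n,p}$ is by design the structure whose basic predicates code the $\Sigma_1^{(n-1)}$-theory of $M$ relative to $p$, so a $\Sigma_1^{(n)}(M)$ definition of a subset of $H^n_M$ collapses level by level to an ordinary $\Sigma_1(M^{n,p})$ definition; boldface parameters from $M$ pass through unchanged, giving $\mathbf{\Sigma}_1^{(n)}(M)\cap \mathcal{P}(H^n_M) \subseteq \mathbf{\Sigma}_1(M^{n,p})$. Part (ii)(b) then falls out of the definitions given in the excerpt: $\rho^{n+1}_M = \min\{\rho_{M^{n,p'}} : p'\in \Gamma^n_M\}$, and the defining property of $R^n_M$ is precisely that its members realise this minimum, whence $\rho^{n+1}_M = \rho_{M^{n,p}}$.

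The only serious bookkeeping item I expect is verifying that the nested inductive hypothesis actually applies, i.e.\ that $p\in R^n_M$ forces $p\upharpoonright k\in R^k_M$ (equivalently, $p\upharpoonright [k,n)\in R^{n-k}_{M^{k,p\upharpoonright k}}$) so that the inner $\tilde{h}^{n-1}_{M^{1,p(0)}}$-representation is legitimate. This is a standard nesting property of the good-parameter classes treated in Zeman \cite{Z02}; once granted, the rest of the argument is an assembly of pieces already in place in the preceding definitions, with no genuinely new step to carry out.
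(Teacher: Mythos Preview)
The paper does not actually prove this lemma: the sentence introducing it reads ``The following states some of these facts and are easy to establish (see \cite{Z02} p.29)'', and no argument is given. Your proposal therefore supplies strictly more than the paper does.

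Your outline is sound. The inductive unwinding of $\tilde{h}^n_M$ via the $g_i$'s, together with the nesting property $p\in R^n_M \Rightarrow p\upharpoonright[1,n)\in R^{n-1}_{M^{1,p(0)}}$, is exactly how the surjectivity in (i) is established in Zeman's treatment, and the definability claim is immediate from the paper's own remark that each $g_i$ is uniformly lightface $\Sigma_1^{(i)}(M)$. For (ii)(b) you have identified the correct reason: with the paper's definition $\rho^{n+1}_M = \min\{\rho_{M^{n,p'}}\mid p'\in\Gamma^n_M\}$, membership in $R^n_M$ (indeed already in $P^n_M$) is precisely what guarantees that the minimum is attained at $p$. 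One small correction of emphasis: in (ii)(a) the boldface parameters from $M$ do not ``pass through unchanged'' but must themselves be rewritten as $\tilde{h}^n_M$-values of elements of $H^n_M$ together with $p$; this is where the hypothesis $p\in R^n_M$ (rather than merely $p\in P^n_M$) is used, and it is worth making that explicit.
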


\begin{lemma}
  \label{Levy=*}Let $M$ be an acceptable $J$-structure. Then (i)
  $\tmmathbf{\Sigma^{\ast}} (M) \subseteq \text{$\tmmathbf{\Sigma}$}_{\omega}
  (M) .$ (ii) Let $p \in R^{\ast}_M$. Then
  ${\tmmathbf{\Sigma^{\ast}}}
  (M) = {\tmmathbf{\Sigma}}_{\omega} (M) .$
\end{lemma}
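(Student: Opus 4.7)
The plan is to handle the two inclusions separately, relying on Lemma \ref{facts} throughout. For (i), I would proceed by induction on $n$ to show that every boldface $\boldsymbol{\Sigma}_1^{(n)}(M)$ relation lies in $\Sigma_{n+1}(M)$. The base case $n=0$ is immediate since $\Sigma_1^{(0)} = \Sigma_1$. For the inductive step, a $\boldsymbol{\Sigma}_1^{(n+1)}(M)$ relation is by definition $\boldsymbol{\Sigma}_1$ over the reduct $M^{n,p}$ for some $p \in \Gamma^n_M$; by Lemma \ref{facts}(ii)(a) this is the same as being $\boldsymbol{\Sigma}_1^{(n+1)}$ in the native sense, and expanding the definition gives an existential quantifier over $M$ with a matrix built by Boolean combinations and bounded quantifiers from $\boldsymbol{\Sigma}_1^{(n)}$ relations. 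The induction hypothesis places the matrix in $\Sigma_{n+1} \cup \Pi_{n+1}$, so the whole relation lands in $\Sigma_{n+2}(M)$. Taking Boolean combinations and unions over $n$ then yields $\boldsymbol{\Sigma}^{*}(M) \subseteq \boldsymbol{\Sigma}_\omega(M)$.

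For (ii), the substantive direction $\boldsymbol{\Sigma}_\omega(M) \subseteq \boldsymbol{\Sigma}^{*}(M)$ uses critically that $p \in R^{*}_M$, i.e.\ that each restriction $p \upharpoonright n$ lies in $R^n_M$. The plan is again induction on $n$: I would show that every $\boldsymbol{\Sigma}_{n+1}(M)$ relation $A$ is $\boldsymbol{\Sigma}_1^{(n)}(M)$ using the parameter sequence $p$. Writing $A(x) \iff \exists y\, \varphi(x,y)$ with $\varphi \in \Pi_n(M)$, the induction hypothesis (applied to $\neg \varphi$) makes the matrix $\boldsymbol{\Sigma}_1^{(n-1)}(M)$, hence by Lemma \ref{facts}(ii)(a) it is $\boldsymbol{\Sigma}_1$ over $M^{n-1,p \upharpoonright (n-1)}$. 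The decisive use of $p \in R^{*}_M$ is now that by Lemma \ref{facts}(i) the Skolem function $\tilde{h}^n_M(\cdot, p)$ maps $\omega\rho^n_M = H^n_M$ \emph{onto} $M$, so the quantifier $\exists y \in M$ can be replaced by $\exists z \in \omega\rho^n_M$ with $y = \tilde{h}^n_M(z,p)$. Since $\tilde{h}^n_M$ is uniformly $\boldsymbol{\Sigma}_1^{(n-1)}(M)$, this rewriting converts $A$ into a $\boldsymbol{\Sigma}_1^{(n)}(M)$ relation, completing the inductive step.

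The main obstacle is the bookkeeping in (ii): at each inductive step I must verify that the parameter that appears remains in $R^{*}_M$ (so that the surjectivity of $\tilde{h}^n_M$ onto $M$ continues to be available) and that the resulting matrix gets correctly reinterpreted as a $\boldsymbol{\Sigma}_1$ predicate over the next reduct. There are no conceptual surprises here — this is pure Jensen-style quantifier translation — but the interplay between the three indices (the projectum level $n$, the parameter component $p \upharpoonright n$, and the reduct $M^{n,p}$) must be tracked carefully to avoid off-by-one errors, and the absence of goodness for $p$ is precisely what prevents (ii) from holding in general.
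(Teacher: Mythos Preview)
The paper does not actually prove this lemma: it is stated in Section~2 among the ``fine structural prerequisites'' without any argument, alongside Lemma~\ref{facts} (referred to \cite{Z02}) and Lemma~\ref{sigma*} (cited as \cite{Z02}~1.11.2). So there is no proof in the paper to compare against.

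Your outline is the standard Jensen argument and is essentially correct. A couple of small points worth tightening. In (i), Lemma~\ref{facts}(ii)(a) as stated only gives one direction (from $\boldsymbol{\Sigma}_1^{(n)}(M)$ to $\boldsymbol{\Sigma}_1(M^{n,p})$) and only for relations on $H^n_M$; the converse, and the unwinding of the master code $A^{n,p}$ as a $\Sigma_1^{(n-1)}$ predicate, are what you actually need for the inductive step, so you should invoke the definition of the reduct directly rather than that lemma. In (ii), your replacement of $\exists y\in M$ by $\exists z\in H^n_M$ via $y=\tilde h^n_M(z,p)$ is the right move, but note that Lemma~\ref{facts}(i) carries the hypothesis $\omega\rho^n_M\in M$; when $\omega\rho^n_M=\mathrm{On}\cap M$ you need the trivial observation $H^n_M=M$ instead. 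Also be careful with variable types: after substituting $\tilde h^n_M(z,p)$ you still have a lingering quantifier $\exists y^0$ of type~$0$ in the matrix; this is absorbed because $\Sigma_1^{(n)}$ is closed under existential quantification over lower-type variables, but it is worth saying explicitly. None of these are gaps, just places where the bookkeeping you flag as the ``main obstacle'' deserves one more line.
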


\begin{lemma}
  \label{sigma*}Let $\overline{M}, M$ be acceptable structures, and suppose
  $\pi : \overline{M} \longrightarrow M$ is $\Sigma_1^{(n)}$-preserving, and
  is such that $\pi \upharpoonright \omega \rho^{n + 1}_M = \tmop{id}$ and
  $\tmop{ran} (\pi) \cap P^{\ast}_M \neq \varnothing$. Then $\pi$ is $
  \Sigma^{\ast}$-preserving.
\end{lemma}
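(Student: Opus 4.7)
The plan is to establish, by induction on $k \geq n$, that $\pi$ is $\Sigma_1^{(k)}$-preserving for every $k < \omega$; the union of these preservation properties gives $\Sigma^{\ast}$-preservation. The base case $k = n$ is the hypothesis. Fix once and for all a witness $q \in P^{\ast}_M \cap \mathrm{ran}(\pi)$ and set $\bar{q} := \pi^{-1}(q) \in \overline{M}$. Since the projecta are non-increasing in the level, $\omega\rho^{k+1}_M \leq \omega\rho^{n+1}_M$ for every $k \geq n$, so $\pi$ is the identity on $\omega\rho^{k+1}_M$ at every stage.

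For the inductive step at level $k$, assume $\pi$ is $\Sigma_1^{(k)}$-preserving. First, one shows $\bar{q} \upharpoonright k \in R^k_{\overline{M}}$ by reflecting the $\Sigma_1^{(k)}$-expressible ``goodness'' schema downward through $\pi$; this uses precisely the assumption $q \in P^{\ast}_M$, which guarantees $q \upharpoonright k \in R^k_M$ at every level. Then $\pi$ induces a canonical map
\[
\pi^k \colon \overline{M}^{k, \bar{q} \upharpoonright k} \longrightarrow M^{k, q \upharpoonright k}
\]
between the reducts, which by $\Sigma_1^{(k)}$-preservation of $\pi$ is $\Sigma_1$-preserving. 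By Lemma \ref{facts}(ii)(b), $\omega\rho_{M^{k, q \upharpoonright k}} = \omega\rho^{k+1}_M$, and $\pi$ fixes every ordinal below this bound, so $\pi^k$ is the identity on its own projectum. By Lemma \ref{facts}(ii)(a), the $\tmmathbf{\Sigma}^{(k+1)}_1(M)$ subsets of $H^{k+1}_M$ are exactly those $\tmmathbf{\Sigma}_1$-definable over $M^{k, q \upharpoonright k}$, so $\Sigma_1$-preservation of $\pi^k$ delivers $\Sigma_1^{(k+1)}$-preservation of $\pi$, closing the induction.

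The main technical obstacle is the downward transfer of parameter goodness: from $q \upharpoonright k \in R^k_M$ one must deduce $\bar{q} \upharpoonright k \in R^k_{\overline{M}}$. This is precisely why the hypothesis $\mathrm{ran}(\pi) \cap P^{\ast}_M \neq \varnothing$ is needed, rather than merely $\mathrm{ran}(\pi) \cap P_M \neq \varnothing$: $P^{\ast}$ supplies a single witness whose restrictions are uniformly good at every level, so the induction is fed with the required good parameter at level $k+1$ for free from the components of $\bar{q}$. The identity condition $\pi \upharpoonright \omega\rho^{n+1}_M = \mathrm{id}$ is used at every stage to make $\pi^k$ restrict to the identity on its projectum, which is what allows $\Sigma_1$-preservation of the reduct map to be converted back into $\Sigma_1^{(k+1)}$-preservation of $\pi$ itself. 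Once $\Sigma_1^{(k+1)}$-preservation is in hand, preservation of standard parameters at level $k+1$ follows in the usual way by reflecting $<^{\ast}$-minimality.
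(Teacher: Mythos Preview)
The paper does not give its own proof of this lemma; it simply cites \cite{Z02} 1.11.2 and moves on. Your outline is essentially the standard argument one finds there: induct on $k \geq n$, use the very good parameter $q \in P^{\ast}_M \cap \mathrm{ran}(\pi)$ to build compatible reducts on both sides, and observe that the induced map $\pi^k$ between reducts is $\Sigma_1$-preserving, which unwinds to $\Sigma_1^{(k+1)}$-preservation of $\pi$. So there is nothing to compare against in the paper itself, and your approach matches the cited reference.

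One small remark on your write-up: the role of the identity condition $\pi \upharpoonright \omega\rho^{n+1}_M = \mathrm{id}$ is not merely to make $\pi^k$ the identity on its projectum, but more fundamentally to guarantee that $\omega\rho^{k+1}_{\overline{M}} = \omega\rho^{k+1}_M$ at each stage, so that the reducts on the two sides are formed at the same height and the induced map $\pi^k$ is genuinely a map between the correct structures. Also, the downward reflection of ``$\bar{q} \upharpoonright k$ is good'' is not a single $\Sigma_1^{(k)}$ formula but a schema, and making this precise is exactly the content of the Downward Extension of Embeddings machinery; your parenthetical acknowledges this, but in a fully written proof you would want to invoke that lemma explicitly rather than leave it as ``reflecting goodness''.
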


{\noindent}{\bf{Proof: }}This is {\cite{Z02}} 1.11.2. \mbox{}\hfill
Q.E.D. \\

Recall that a premouse $M$ is {\tmem{sound above $\nu$}} if $\omega
\rho^{n + 1}_M \leq \nu$ means that $\text{$\tilde{h}^{n +
1}_M$}$$(\nu \cup \{p_M \})= |M|$. We also say that it is
$k${\tmem{-sound}} if it is sound above $\omega \rho^k_M$.

In the next lemma there are various concepts that we shall quickly
gloss: $o^N (\kappa)$ is the extender order of $\kappa$ in the
hierarchies under consideration (and roughly corresponds to Mitchell
order of measures); the hat over a premouse, as in $\hat{N},$
indicates the {\tmem{expansion}} of the premouse structure $N$, to
which extenders are usually applied (as, for example, when
coiterations of premice are formed). The premice then act as
{\tmem{bookkeeping}} premice for the indices that are being used,
whilst the actual extenders are applied to these hatted expansions.
We simply follow the conventions of $\text{{\cite{JeZe00}}}$ and we
ignore the differences between these structures. The reader worried
about these details may consult $\text{{\cite{JeZe00}}}$ Sect. 2 or
Ch. 8 of {\cite{Z02}}.

\begin{theorem}
  {\tmem{\label{CLI}{\tmstrong{(Condensation Lemma I)}}}} ({\tmem{cf}}{\tmem{
  {\cite{JeZe00}} 2.1}}). Suppose there is no inner model for $o_M (\kappa) =
  \kappa^{+ +} .$ Let $N$ be a premouse, $M$ a mouse and $\sigma : \hat{N}
  \longrightarrow_{\Sigma_0^{(n)}} \hat{M}$ with $\sigma \upharpoonright
  \omega \rho^{n + 1}_N = \tmop{id} ;$ Then $N$ is a mouse; moreover if $N$ is
  sound above $\nu = \tmop{crit} (\sigma)$ then one of the following holds:

  (i) $N$ is the core of $M$ above $\nu$ and $\sigma$ is the iteration map,
  which is the corresponding core map;

  (ii) $N$ is a proper initial segment of $M$;

  (iii) For some $\kappa < \nu$ $\beta =_{\tmop{df}} o^N (\kappa) \geq \nu$,
  and if $\zeta < \kappa^{+ M}$ is maximal so that $E^M_{\beta}$ measures all
  subsets of $\kappa = \tmop{crit} (E^M_{\beta})$ which lie in $M\| \zeta$,
  then $N$ is an initial segment of $M^+$ where $\pi : \widehat{M\| \zeta}
  \longrightarrow^{\ast}_{E^M_{\beta}} M^+$.
\end{theorem}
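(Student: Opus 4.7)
The plan is to first establish iterability of $N$, then carry out a coiteration of $\hat{N}$ with $\hat{M}$, and finally analyse which of the three possible endings the coiteration admits. Iterability of $N$ is immediate from the hypothesis: any putative iteration tree $\mathcal{T}$ on $\hat{N}$ copies, via $\sigma$, to a tree $\sigma\mathcal{T}$ on the mouse $\hat{M}$; wellfounded branches on $\sigma\mathcal{T}$ pull back to wellfounded branches on $\mathcal{T}$, because $\sigma$ is $\Sigma_0^{(n)}$-preserving and (by Lemma \ref{sigma*}, once one checks $\mathrm{ran}(\sigma)\cap P^{\ast}_M\neq\varnothing$) in fact $\Sigma^{\ast}$-preserving on the relevant initial segments. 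This also yields that $N$ is itself a mouse.

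Next, assuming $N$ is sound above $\nu=\mathrm{crit}(\sigma)$, I would coiterate $\hat{N}$ and $\hat{M}$ to produce iterates $\hat{N}^{\ast}$, $\hat{M}^{\ast}$ together with iteration maps $i:\hat{N}\to\hat{N}^{\ast}$, $j:\hat{M}\to\hat{M}^{\ast}$. Since $\sigma\upharpoonright\omega\rho^{n+1}_{N}=\mathrm{id}$ and $N$ is sound above $\nu$, a Dodd--Jensen style minimality argument (using that $\sigma$ realises $N$ into $M$ on a generating set) forces the $N$-side of the coiteration to be trivial, and shows that either $\hat{N}^{\ast}\trianglelefteq\hat{M}^{\ast}$ with $j=\sigma$ (modulo identification with the core) or $N$ appears unaltered as an initial segment of $\hat{M}^{\ast}$. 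From this the first two conclusions fall out: if the $M$-side does not drop and $N$ is reached as the core, we are in case (i); if the $M$-side produces $N$ as a proper initial segment of $M$ itself (so that $j=\mathrm{id}$ there), we are in case (ii).

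The third case arises precisely when the $M$-side coiteration demands an extender $E^M_{\beta}$ with critical point $\kappa<\nu$. By the hypothesis that there is no inner model with $o_M(\kappa)=\kappa^{++}$ and by the soundness of $N$ above $\nu$, the first such extender must satisfy $\beta=o^N(\kappa)\ge\nu$, and the appropriate domain of application is the maximal $\zeta<\kappa^{+M}$ for which $E^M_{\beta}$ measures all subsets of $\kappa$ in $M\|\zeta$; taking the $\ast$-ultrapower of $\widehat{M\|\zeta}$ by $E^M_{\beta}$ gives the structure $M^{+}$ into which $N$ embeds as an initial segment, yielding (iii).

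The main obstacle I expect is the fine-structural bookkeeping in case (iii): verifying that the chosen $\zeta$ really is maximal, that the $\ast$-ultrapower $\pi:\widehat{M\|\zeta}\to^{\ast}_{E^M_{\beta}}M^{+}$ is well defined and preserves the relevant $\Sigma^{(n)}_1$ theory (via Lemma \ref{sigma*}), and that the projectum/soundness data of $N$ identifies it uniquely as an initial segment of $M^{+}$. A secondary subtlety is the transition between the hatted expansions $\hat{M},\hat{N}$ (on which extenders act) and the underlying premice, where parameter and projectum data live; one must check that the coiteration performed on the expansions descends correctly to the premouse level, which is where the work done in \cite{JeZe00} and in Ch.~8 of \cite{Z02} is quoted rather than redone.
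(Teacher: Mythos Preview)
The paper does not prove this theorem at all: it is explicitly quoted as a black box from \cite{JeZe00} (see the introduction, where the authors write ``We merely quote these as Condensation Lemmas (I) and (II)''). So there is no proof in the paper against which to compare your proposal.

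That said, your sketch follows the general shape of the argument in \cite{JeZe00} and \cite{Z02}: copy to get iterability, coiterate, and classify the outcome. A few points in your outline would need tightening if you were to carry it out. First, your appeal to Lemma~\ref{sigma*} to upgrade $\sigma$ to $\Sigma^{\ast}$-preserving is not justified by the hypotheses: that lemma requires $\mathrm{ran}(\sigma)\cap P^{\ast}_M\neq\varnothing$, which is not assumed here, and in fact $\Sigma^{\ast}$-preservation of $\sigma$ is not what drives the argument. Second, the claim that ``a Dodd--Jensen style minimality argument forces the $N$-side of the coiteration to be trivial'' is not quite the mechanism: the $N$-side need not be literally trivial, and the actual argument hinges on comparing the new subset of $\nu$ (coming from soundness of $N$ above $\nu$ and $\omega\rho^{n+1}_N\le\nu$) against what $M$ and its iterates can see, together with the agreement below $\nu$ forced by $\sigma\upharpoonright\nu=\mathrm{id}$. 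Third, your identification of case~(iii) as arising from an extender on the $M$-side with critical point $\kappa<\nu$ is correct in spirit, but the role of the anti-large-cardinal hypothesis is more delicate than you indicate: it is used to rule out certain overlapping-extender configurations that would otherwise produce further alternatives beyond (i)--(iii), not merely to pin down $\beta$ and $\zeta$.
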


In order to have sufficient further condensation Jensen and Zeman
require certain parameters associated with canonical witness
structures to be in the range of their maps. We only remind the
reader of this definition here, and refer  to the paper for a full
discussion of their significance.

\begin{definition}
  Suppose $\gamma \in p^n_M$ and let $\sigma^M_{\gamma} $be the canonical
  witness map corresponding to $W^{\gamma}_M ;$  if
$  \sup (\tmop{ran} (\sigma^{M}_{\gamma})) \cap \omega \rho^n_{M_{}}
<
  \omega \rho^n_{M}$ we set $\delta (\gamma) = \sup (\tmop{ran}
  (\sigma^{M_{}}_{\gamma}) \cap \omega \rho^n_{M})$.

  We set $\widetilde{p}^n_M =_{\tmop{df}} \{\gamma \in p^n_M \mid \delta
  (\gamma)$ is defined$\}$, and appropriately $\widetilde{p}_M
  =_{\tmop{df}} \bigcup_n \tilde{p}^n_M$.

  Further set $d^n_M =_{\tmop{df}} \{\delta (\gamma)\mid \gamma \in
  \tilde{p}^{}_M, k \leq n\}$ etc.
\end{definition}

This finite (possibly empty) set $d^n_M$ then collects together all those
sups of those canonical witness maps $\sigma_{\gamma}$ just for those $\gamma$
for which the map is non-cofinal at the $k$'th levels for $k \leq n$. This
allows for an appropriate form of the Condensation Lemma for hierarchies below
mice $M$ with any $\kappa$ with $(o_M (\kappa) = \kappa^{+ +})^M$. The
following is again taken from {\cite{JeZe00}}.

\begin{theorem}
  \label{CLII}{\tmem{{\tmstrong{(Condensation Lemma II)}}
  ({\tmem{cf}}{\cite{JeZe00}} 3.1)}} Suppose there is no inner model for $o_M
  (\kappa) = \kappa^{+ +} .$ Let $N, M$ be mice and $\sigma : \hat{N}
  \longrightarrow_{\Sigma_1^{(n)}} \hat{M}$. Suppose further that $\sigma (
  \bar{\alpha}) = \alpha, \sigma ( \bar{p}) = p_M \backslash \alpha,$ and

  (i) $\omega \rho^{n + 1}_M \leq \alpha < \omega \rho^n_M$ \ and $M$ is sound
  above $\alpha ;$

  (ii) $d^n_M \subseteq \tmop{ran} (\sigma)$.

  Then $\bar{p} = p_N \backslash \bar{\alpha}$ ; $N$ is sound above
  $\bar{\alpha}$, $\sigma ( \tilde{p}_N \backslash \bar{\alpha}) = p_M
  \backslash \alpha$ and $\sigma (\delta^N (\gamma)) = \delta^M (\sigma
  (\gamma))$ whenever $\gamma \in \tilde{p}_N \backslash \bar{\alpha}$.
\end{theorem}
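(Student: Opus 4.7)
The plan is to follow the template of Condensation Lemma I but now carried up through the $n$ projecta, with hypothesis (ii) providing the rigidity needed to make the witness maps align. The statement has three parts to establish: $\bar{p} = p_N \setminus \bar{\alpha}$, soundness of $N$ above $\bar{\alpha}$, and the compatibility $\sigma(\delta^N(\gamma)) = \delta^M(\sigma(\gamma))$; I would address them in that order.

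First, for $\bar{p} = p_N \setminus \bar{\alpha}$: suppose, for contradiction, that some $q <^{\ast} \bar{p}$ with $q \subseteq N \setminus \bar{\alpha}$ generated the same $\Sigma_1^{(n)}$-hull of $\bar{\alpha}$ in $N$ as $\bar{p}$ does. Applying $\sigma$, the image $\sigma(q)$ would generate the same $\Sigma_1^{(n)}$-hull of $\alpha$ in $M$ as $p_M \setminus \alpha$, with $\sigma(q) <^{\ast} p_M \setminus \alpha$, contradicting the soundness of $M$ above $\alpha$ given in clause (i). Preservation of $<^{\ast}$ across $\sigma$ uses the $\Sigma_1^{(n)}$-preservation of $\sigma$ together with the hypothesis $\sigma(\bar{p}) = p_M \setminus \alpha$. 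Soundness of $N$ above $\bar{\alpha}$ then follows by pulling back the identity $\tilde{h}^n_M(\alpha \cup \{p_M\}) = |M|$ through $\sigma$: every $x \in N$ lies in $\tilde{h}^n_N(\bar{\alpha} \cup \{\bar{p}\})$ by $\Sigma_1^{(n)}$-elementarity applied to the defining formula.

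Second, for the $d$-preservation, fix $\gamma \in \tilde{p}_N \setminus \bar{\alpha}$ and write $\gamma^{\ast} = \sigma(\gamma)$. The canonical witness structures $W^\gamma_N$, $W^{\gamma^{\ast}}_M$ and their witness maps $\sigma^N_\gamma$, $\sigma^M_{\gamma^{\ast}}$ are uniformly definable from $\gamma$ and $\gamma^{\ast}$, so $\Sigma_1^{(n)}$-preservation yields a factor embedding $\tau \colon W^\gamma_N \to W^{\gamma^{\ast}}_M$ with $\sigma \circ \sigma^N_\gamma = \sigma^M_{\gamma^{\ast}} \circ \tau$. Applying Condensation Lemma I to $\tau$ locates $W^\gamma_N$ among its three alternatives, with the soundness of a witness structure above its own critical threshold ruling out the extender-image case. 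Hypothesis (ii), $d^n_M \subseteq \tmop{ran}(\sigma)$, then forces $\delta^M(\gamma^{\ast}) = \sup(\tmop{ran}(\sigma^M_{\gamma^{\ast}})) \cap \omega\rho^n_M$ into $\tmop{ran}(\sigma)$, so it must equal $\sigma(\delta^N(\gamma))$, as required.

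The main obstacle I anticipate is the simultaneous bookkeeping of (a) the witness data $W^\gamma$ for $\gamma \in \tilde{p}^k_N$ at each level $k \leq n$, (b) the $d^k$-information at those levels, and (c) the distinction between the hatted expansions (on which extenders act) and the underlying mouse structures (where $<^{\ast}$-minimality of the standard parameter is formulated). Without hypothesis (ii), some $\delta^M(\gamma^{\ast})$ could fall outside $\sup(\tmop{ran}(\sigma)) \cap \omega\rho^n_M$ and break commutativity; the point of (ii) is precisely to preclude this. Since the authors quote this as \cite{JeZe00} 3.1 rather than prove it, I would expect the detailed verification to follow that paper essentially verbatim, with the real work concentrated in the case analysis for the factor map $\tau$ and the induction on $k \leq n$ needed to handle each $\gamma \in \tilde{p}^k_N$ uniformly.
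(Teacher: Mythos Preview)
The paper does not prove this theorem: as the authors state explicitly in the introduction to Section~2, ``We merely quote these as Condensation Lemmas (I) and (II)'', and the theorem carries the attribution ``(\emph{cf} \cite{JeZe00} 3.1)'' with no proof following. So there is no proof in the paper to compare your proposal against; the authors treat this as a black box imported from Jensen--Zeman, and you yourself observe this at the end of your proposal.

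That said, your outline is broadly the right shape for how the Jensen--Zeman argument goes. A couple of points where the sketch would need tightening if carried out in full: your soundness argument (``pulling back $\tilde{h}^n_M(\alpha \cup \{p_M\}) = |M|$'') works only because $\Sigma_1^{(n)}$-preservation in this setting is two-sided and the relevant hull membership statement is $\Sigma_1^{(n)}$ in parameters that lie in $\tmop{ran}(\sigma)$; this should be made explicit rather than left as ``$\Sigma_1^{(n)}$-elementarity applied to the defining formula''. For the $d$-preservation, the real content of \cite{JeZe00} 3.1 is an induction on $k \leq n$ showing that the solidity witnesses at each level are preserved, and hypothesis~(ii) is used level-by-level to ensure the $\delta$-values land in $\tmop{ran}(\sigma)$; your factor-map-plus-Condensation-I idea is the right mechanism, but the induction structure is essential and you only gesture at it in the final paragraph. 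The paper's own Lemma~\ref{dpreserve} (preservation of $d$ under normal iterations) gives a local template for the kind of $\Pi_1^{(k)}$/$\Sigma_1^{(k)}$ bookkeeping required.
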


We shall need a lemma on preservation of these $d$-parameters under normal
iterations. We prove this here.

\begin{lemma}
  \label{dpreserve}Suppose $\pi : M \longrightarrow N$ is a normal iteration
  of $M$. Then $\pi (d_M) = d_N$.
\end{lemma}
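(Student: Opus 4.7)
The proof is by induction on the length of the iteration. A normal iteration factors as a transfinite composition of single-extender ultrapowers and direct limits, so it suffices to treat a single step $\pi : M \longrightarrow N = \mathrm{Ult}(M, E)$ for some $E$ on the $M$-sequence. Such a $\pi$ is $\Sigma^\ast$-preserving by the standard ultrapower analysis, whence $\pi(p^n_M) = p^n_N$ for every $n$ and $\pi(\omega\rho^n_M) = \omega\rho^n_N$. Since $\pi$ is then a bijection between the standard parameter sets, it suffices to show for each $\gamma \in p^n_M$ that $\gamma \in \tilde{p}^n_M$ iff $\pi(\gamma) \in \tilde{p}^n_N$ and, in that case, that $\pi(\delta^M(\gamma)) = \delta^N(\pi(\gamma))$.

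Fix such $\gamma$. The canonical witness $W^\gamma_M$ together with its witness map $\sigma^M_\gamma$ is uniquely specified by a $\Sigma_1^{(n-1)}$ definition over $M$ involving $\gamma$ and $p^{n-1}_M$. Since $\pi$ preserves this definition and sends $(\gamma, p^{n-1}_M)$ to $(\pi(\gamma), p^{n-1}_N)$, there is a canonical lift $\pi^\gamma : W^\gamma_M \longrightarrow W^{\pi(\gamma)}_N$ satisfying $\sigma^N_{\pi(\gamma)} \circ \pi^\gamma = \pi \circ \sigma^M_\gamma$, and the target pair is the canonical witness for $\pi(\gamma)$ in $N$. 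Setting $A_\gamma := \mathrm{ran}(\sigma^M_\gamma) \cap \omega\rho^n_M$ and $B_\gamma := \mathrm{ran}(\sigma^N_{\pi(\gamma)}) \cap \omega\rho^n_N$, the commutative square gives $\pi[A_\gamma] \subseteq B_\gamma$, and inspection of the canonical-witness construction yields the crucial equality $\sup \pi[A_\gamma] = \sup B_\gamma$.

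If $\delta^M(\gamma)$ is defined, i.e.\ $\sup A_\gamma < \omega\rho^n_M$, then by $\pi$'s order-preservation on bounded sets, $\pi(\sup A_\gamma) = \sup \pi[A_\gamma] = \sup B_\gamma < \pi(\omega\rho^n_M) = \omega\rho^n_N$, so $\pi(\gamma) \in \tilde{p}^n_N$ with $\delta^N(\pi(\gamma)) = \pi(\delta^M(\gamma))$. Conversely, if $A_\gamma$ is cofinal in $\omega\rho^n_M$, then $\pi[A_\gamma]$ is cofinal in $\omega\rho^n_N$ by the standard cofinality of iteration maps at projecta, and hence $\pi(\gamma) \notin \tilde{p}^n_N$. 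Ranging over $n$ and $\gamma \in p_M$ yields $\pi(\tilde{p}_M) = \tilde{p}_N$ and hence $\pi(d_M) = d_N$.

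The principal obstacle is the identity $\sup \pi[A_\gamma] = \sup B_\gamma$: one must argue that ordinals in $N \setminus \mathrm{ran}(\pi)$ cannot appear in $\mathrm{ran}(\sigma^N_{\pi(\gamma)}) \cap \omega\rho^n_N$ cofinally above $\sup \pi[A_\gamma]$. This is where the $<^\ast$-minimality defining the canonical witness is essential --- any such extraneous ordinal would produce a $<^\ast$-earlier witness for $\pi(\gamma)$ in $N$, contradicting minimality --- and the argument rests on the full parameter-preservation analysis underlying Theorem \ref{CLII}, rather than on mere $\Sigma^\ast$-elementarity of $\pi$.
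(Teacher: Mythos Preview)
Your argument has a foundational error and a genuine gap at the critical step.

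First, the claim that a single fine-structural ultrapower map $\pi : M \to N = \mathrm{Ult}^\ast(M,E)$ is $\Sigma^\ast$-preserving is false. If $\mathrm{crit}(E) = \kappa \in [\omega\rho^{n+1}_M,\omega\rho^n_M)$, then $\pi$ is $\Sigma_0^{(n)}$-preserving and cofinal into $\omega\rho^n_N$ (hence $\Sigma_1^{(n)}$), and $\Sigma_2^{(k)}$-preserving for $k<n$, but no better at level $n$. This distinction is exactly what drives the correct proof. In particular your continuity step $\pi(\sup A_\gamma)=\sup\pi[A_\gamma]$ is unjustified for an ultrapower map, and $\pi(\omega\rho^n_M)=\omega\rho^n_N$ need not hold at the top level (one only has $\sup\pi\text{``}\omega\rho^n_M=\omega\rho^n_N$).

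Second, the ``crucial equality'' $\sup\pi[A_\gamma]=\sup B_\gamma$ is the heart of the matter, and your sketch does not establish it. The appeal to the analysis behind Condensation Lemma II is circular: that lemma \emph{assumes} $d^n_M\subseteq\mathrm{ran}(\sigma)$, which is precisely what is to be shown preserved --- the paper explicitly notes this obstruction. The $<^\ast$-minimality idea you outline (``any extraneous ordinal would produce a $<^\ast$-earlier witness'') does not go through as stated: elements of $B_\gamma\setminus\mathrm{ran}(\pi)$ are represented by functions in the ultrapower, and there is no immediate minimality contradiction.

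The paper's route is different. It avoids the witness maps and instead characterises $\delta^M(\nu)$ (for $\nu\in p_M\cap[\omega\rho^{k+1}_M,\omega\rho^k_M)$) by the $\Pi_1^{(k)}$ formula $(\ast)$ bounding the $\tilde{h}^{k+1}_M$-hull below $\nu$. Upward $\Pi_1^{(k)}$-preservation gives $\pi(\delta^M(\nu))\geq\delta^N(\pi(\nu))$. For $k<n$, $\Sigma_2^{(k)}$-preservation rules out strict inequality directly. For $k=n$ --- the case your approach cannot handle --- one represents a putative smaller $\delta^N(\pi(\nu))$ as $\pi(f)(\kappa)$ and invokes {\L}o\'s: on a measure-one set $f(\alpha)<\delta^M(\nu)$ while $f(\alpha)$ already bounds the hull, contradicting the minimality of $\delta^M(\nu)$. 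The converse (if $\delta^N(\pi(\nu))$ is defined then so is $\delta^M(\nu)$) uses cofinality of $\pi$ into $\omega\rho^n_N$ for $k=n$ and $\Sigma_2^{(k)}$-preservation for $k<n$.
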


\nod{\bf{Proof: }}This would be by induction on the length of the
iteration, but we simply do a one step ultrapower by an extender $E$
with critical point $\kappa$ and the reader can form the general and
direct limit argument herself. This does not follow quite
immediately from  Condensation Lemma II as the latter assumes $d_N$
is in the range of the map. We know that $\pi (p_M) = p_N$. We may
express

$\overline{p}_M =\{\nu \in p_M \mid $ If $\nu \in [\omega \rho^{k +
1}_M, \omega \rho^k_M)$ then the canonical witness map is\\
\hspace*{9em} non-cofinal into $\omega \rho^k_M \}$.

\nod And: $d_M =\{\delta^M (\nu) | \nu \in \overline{p_{}}_M \}$.

\nod Then if $\delta (\nu) \in d_M$ with $\nu \in [\omega \rho^{k +
1}_M, \omega \rho^k_M)$ we have as in {\tmem{}}{\cite{JeZe00}}:

$(\ast)$ \ \ $\forall \xi^k \forall \zeta^k (\xi^k < \nu \wedge \zeta^k =
\widetilde{h}^{k + 1}_M (\xi^k, p_M \backslash (\nu + 1)) \longrightarrow
\zeta^k \leq \delta (\nu))$.

\nod This is $\Pi^{(k)}_1$ in $\nu, \delta (\nu),$ and $p_M$. If
$\tmop{crit} (E) = \kappa \in [\omega \rho^{n + 1}_M, \omega
\rho^n_M)$ then $\pi$ is $\Sigma_0^{(n)}$ preserving and cofinal
into $\omega \rho^n_M$, hence $\Sigma_1^{(n)}$-preserving. If $k <
n$ then it is $\Sigma_2^{(k)}$preserving. Consequently wherever
$\nu$ lies we have from these preservation properties:

(1) $\nu \in \bar{p}_M \longrightarrow \pi (\nu) \in \bar{p}_N \wedge \pi
(\delta^M (\nu)) \geq \delta^N (\pi (\nu))$ .

{\noindent}We want equality here. For $k < n$ $\Sigma_2^{(k)}$preservation
suffices to guarantee this: if

$\exists \delta^k < \pi (\delta^M (\nu))$[$\forall \xi^k \forall
\zeta^k (\xi^k < \pi (\nu) \wedge \zeta^k = \widetilde{h}^{k + 1}_N
(\xi^k, \pi (p_M)_{} \backslash (\pi (\nu) + 1))$ \\ \hspace*{5em}
$\longrightarrow \zeta^k \leq \delta^k]$

{\noindent}then this would go down to $M$ and give a contradiction.
For $k = n$ we can reason as follows. Suppose $\bar{\delta} = \pi
(f) (\kappa) = \delta^N (\pi (\nu)) < \pi (\delta^M (\nu))$. As at
$(\ast)$:

$\forall \xi^n \forall \zeta^n (\xi^n < \pi (\nu) \wedge \zeta^n =
\widetilde{h}^{n + 1}_N (\xi^n, p_N \backslash (\pi (\nu) + 1))
\longrightarrow \zeta^n \leq \overline{\delta})$.

{\noindent}By using a {\L}o\v{s} Lemma we should have that:

$\{\alpha < \kappa | \forall \xi^n \forall \zeta^n (\xi^n < \nu \wedge \zeta^n
= \widetilde{h}^{n + 1}_M (\xi^n, p_M \backslash (\nu + 1)) \longrightarrow
\zeta^n \leq f (\alpha))\}$

{\noindent}was of $E$-measure 1. But on a set of measure 1 $f
(\alpha) < c_{\delta^M (\nu)} (\alpha) = \delta^M (\nu)$ so this
contradicts the definition of $\delta^M (\nu)$. Hence

(3) $\nu \in \bar{p}_M \longrightarrow \pi (\delta^M (\nu)) = \delta^N (\pi
(\nu))$.

{\noindent}Now note:

(4) $\pi (\nu) \in \bar{p}_N \longrightarrow \nu \in \overline{p}_M$
and hence again $\pi (\delta^M (\nu)) = \delta^N (\pi (\nu))$.

{\noindent}For $k < n$ this follows from $\Sigma_2^{(k)}$
preservation.  For $k = n$ this follows from the cofinality of $\pi$
into $\omega \rho^n_N$: if $\delta^N (\pi (\nu))$ is defined, then
it is less than some $\pi (\delta)$ and the formula $(\ast)$ written
out for
$N$ and $\pi(\delta)$ then goes down to $M$, so this suffices. \mbox{}\hfill  Q.E.D. \\

We shall also be assuming familiarity with the construction of
fine-structural pseudo-ultrapowers, for which see {\cite{Z02}} or
\cite{Wehbk}. We shall be using various ``lift-up'' lemmas. These
are in the following form.

\begin{definition}
  \label{kmn}Let $M$ be an acceptable $J$-structure, and $\nu \in M$ a regular
  cardinal of $M.$ Then $k (M, \nu)$ is defined to be the least $k$ (if it
  exists) so that there is a good {\tmstrong{$\Sigma_1^{(k)}$-}}definable
  function whose domain is a bounded subset of $\nu$ and whose range is
  unbounded in $\nu$. (Such a function is said to {\tmem{singularize}} $\nu$
  and we say that $\nu$ is
{  \boldmath{$\Sigma_1^{(k)}$}}$(M)$-{\tmem{singularized over
$M$}}.)
\end{definition}
\begin{definition}
Let $\bar{M},\nb, k=k(\bar{M},\nb)$ be as above with $\nb$ regular
in $\bar{M}$. Let $\overline{Q}=_{df} J_{\nb}^{\bar{M}} $. Define
 $\Gamma^k_{\bar{M},\nb} =_{\tmop{df}}$
$$\{ f\mid \tmop{dom} ( f )
 \in \overline{Q}\wedge \tmop{ran} ( f ) \subseteq \overline{M} )\,
\wedge (n<k  \wedge\, f
  \in \tmmathbf{\Sigma}^{( n )}_1 ( \overline{M} ) \wedge \omega \rho^{n +
  1}_{\overline{M}} \geq \overline{\nu} )  \}.$$
\end{definition}
\begin{theorem}
  \label{kpseudo}\tmtextbf{{\tmem{(\tmmathbf{}Pseudo-Ultrapower Theorem)}}}
  Let $\bar{M}$ be an acceptable $J$-structure, $\bar{\nu}$ a regular cardinal
  of $\bar{M}$ but with $k = k ( \bar{M}, \bar{\nu})$ defined. Let $\overline{Q}=_{df} J_{\nb}^{\bar{M}} $.
  Then there is a
  map $\tilde{\sigma} : \bar{M} \longrightarrow_{\Sigma_0} M$ (the ``canonical
  $k$-extension'' of $\sigma : \overline{Q} \longrightarrow_{\Sigma_0} Q$)
  satisfying:

  (i) $\tilde{\sigma}$ is $Q$-preserving, $M$ is an acceptable end extension
  of $Q$, and

  $M =\{ \tilde{\sigma} (f) (u) \hspace{0.25em} | \hspace{0.25em} u \in
  \sigma (\tmop{dom} (f)), f \in \Gamma\}$.

  (ii)a) $\tilde{\sigma}$ is $\Sigma^{(n)}_2$ preserving for $n < k$;\\
  \ \ \ \ \ \ b) $\rho_k = \rho^k_M$, and $\tilde{\sigma}$ is
  $\Sigma^{(k)}_0$ preserving and cofinal (thus
  $\Sigma_1^{(k)}$-preserving);\\
  (iii) $\tilde{\sigma} ( \bar{\nu}) = \nu$ and the latter is regular in $M$
  ;\\
  (iiv)  $k = k (M, \nu)$: $k$ is least so that there is a $\tmmathbf{\Sigma_1^{(k)}} (M)$ map
  cofinalising $\nu$.
\end{theorem}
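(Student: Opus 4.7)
The plan is to build $M$ as a Jensen-style fine-structural pseudo-ultrapower of $\bar{M}$ along the map $\sigma$, using precisely the family $\Gamma = \Gamma^k_{\bar{M},\bar{\nu}}$ as the class of ``functions modulo which we take the ultrapower.'' The elements of $M$ will be equivalence classes $[f,u]$ where $f \in \Gamma$ and $u \in \sigma(\operatorname{dom}(f))$; all of the listed properties then reduce to a {\L}o\'{s}-style preservation theorem plus bookkeeping on which projecta are disturbed.

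First I set up the quotient. For $f,g \in \Gamma$ of definitional complexity $\Sigma_1^{(n)}$ respectively $\Sigma_1^{(m)}$, I show that the sets $A_{=}(f,g) = \{(a,b) \in \operatorname{dom}(f) \times \operatorname{dom}(g) : f(a) = g(b)\}$ and $A_\in(f,g) = \{(a,b) : f(a) \in g(b)\}$, though a priori only $\Sigma_1^{(\max(n,m))}(\bar{M})$, are in fact elements of $\bar{M}$ and, by acceptability plus $\operatorname{dom}(f),\operatorname{dom}(g) \in \bar{Q}$ together with $\omega\rho^{n+1}_{\bar{M}} \geq \bar{\nu}$, actually elements of $\bar{Q}$; so $\sigma$ may be applied to them. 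Declare $(f,u) \sim (g,v)$ iff $(u,v) \in \sigma(A_{=}(f,g))$, and define $[f,u]\,\in_M\,[g,v]$ analogously via $A_\in$. Verification of transitivity, well-definedness of $\in_M$, and extensionality reduces via $\sigma$'s $\Sigma_0$-preservation to the corresponding facts in $\bar{M}$.

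Next I prove the {\L}o\'{s} theorem for $\Sigma_0$ formulas of the language of $\bar{M}$:
\[
M \models \varphi([f_1,u_1],\dots,[f_r,u_r]) \iff (u_1,\dots,u_r) \in \sigma\bigl(\{\vec a : \bar{M}\models \varphi(f_1(a_1),\dots,f_r(a_r))\}\bigr).
\]
This is the fundamental tool: well-foundedness of $\in_M$ comes from it (using acceptability of $\bar{M}$ to bound all $f$'s into a fixed $\bar{M}$-cardinal), so $M$ is transitive up to a Mostowski collapse, which I henceforth perform silently. Set $\tilde{\sigma}(x) = [c_x,0]$ with $c_x$ the constant function at $x$; $\Sigma_0$-preservation of $\tilde{\sigma}$ is an immediate instance of {\L}o\'{s}. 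Identifying each $u \in Q$ with $[\mathrm{id},u]$ shows $Q$ is an initial segment of $M$, that $\tilde{\sigma}\upharpoonright \bar{Q} = \sigma$, that $M$ end-extends $Q$, and the generating condition (i). For (iii), $\tilde{\sigma}(\bar{\nu}) = \nu$ is read off from {\L}o\'{s}; regularity of $\nu$ in $M$ (as an element) follows because any $f \in M$ with $\operatorname{dom}(f) \in M\|\nu$ and $f:\operatorname{dom}(f)\to \nu$ is represented by $\Sigma_1^{(j)}(\bar{M})$ data with $j<k$, hence by choice of $k$ cannot be cofinal.

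The preservation claims (ii) and property (iv) are the main technical point and where I expect to spend the most care. For $n < k$ we have $\omega\rho^{n+1}_{\bar{M}}\geq\bar{\nu}$, so $\Gamma$ contains representatives of every $\Sigma_1^{(n)}$-relation on any set in $\bar{Q}$; a standard argument upgrading {\L}o\'{s} through the $\Sigma_1^{(n)}$-Skolem functions (using Lemma \ref{facts} and the good definability of $\widetilde{h}^{n+1}_M$) yields $\Sigma_2^{(n)}$-preservation of $\tilde{\sigma}$ for such $n$, giving (ii)a). At the critical level $k$, the fixed $\Sigma_1^{(k)}(\bar{M})$ singularization of $\bar{\nu}$ transports under $\tilde\sigma$ to a $\Sigma_1^{(k)}(M)$ map cofinal in $\nu$; this both forces $\rho^k_M \leq \rho_k$ (and the reverse inequality is verified by noting no lower-level singularization can exist in $M$, else it would pull back through $\tilde\sigma$ to one in $\bar{M}$ below level $k$), delivers cofinality of $\tilde{\sigma}$ into $\omega\rho^k_M$, and hence, combined with $\Sigma_0^{(k)}$-preservation from {\L}o\'{s}, yields $\Sigma_1^{(k)}$-preservation, proving (ii)b). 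The same transport/pullback pair simultaneously establishes (iv): $k(M,\nu)\leq k$ by transport, and $k(M,\nu)\geq k$ by pullback through the established $\Sigma_2^{(n)}$-preservation for $n<k$. The delicate step in the whole argument is maintaining, through the Jensen hierarchy, that no lower-level $\Sigma_1^{(j)}(M)$ function for $j<k$ can cofinalize $\nu$; this is where one must use the preservation of the precise projecta (item (ii)b)) rather than merely elementarity into some level.
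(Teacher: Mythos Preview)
The paper does not supply its own proof of this theorem: immediately before stating it the authors write ``We shall also be assuming familiarity with the construction of fine-structural pseudo-ultrapowers, for which see \cite{Z02} or \cite{Wehbk},'' and the Pseudo-Ultrapower Theorem is then merely stated as one of the ``lift-up'' lemmas they will use. So there is no in-paper argument to compare against; your sketch is in fact the standard construction one finds in those references, and is the intended content behind the citation.

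As a sketch of that standard construction your outline is sound. The key technical point you correctly identify is that for $f,g\in\Gamma$ the Boolean sets $A_{=}(f,g)$, $A_{\in}(f,g)$ land in $\bar{Q}$ (via $\omega\rho^{n+1}_{\bar{M}}\geq\bar{\nu}$ and acceptability), so $\sigma$ can be applied; from there the \L o\'s argument, well-foundedness, and the layered preservation claims follow the usual pattern. One small caution: your phrasing ``forces $\rho^k_M\leq\rho_k$'' and the reverse inequality is slightly loose, since the statement (ii)b) as written in the paper is ambiguous about what $\rho_k$ denotes; in the standard treatment one shows $\tilde\sigma$ is cofinal into $\omega\rho^k_M$ directly from the representation of elements of $M^{k,p}$ by $\Gamma$-functions, rather than by a sandwich on projecta. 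But this is a matter of exposition rather than a gap.
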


\begin{lemma}
  \label{interpolation}{\tmem{{\tmstrong{(Interpolation Lemma)}}}} Suppose
  $\overline{M} = \langle J^{\overline{A}}_{\overline{\beta}}, \overline{B}
  \rangle$ is a structure such that $\overline{\nu}$ is regular in
  $\overline{M}$, but with $k = k ( \bar{M}, \bar{\nu})$ defined. Suppose
  further that $f : \overline{M} \longrightarrow_{\Sigma_1^{(k)}} M = \langle
  J^A_{\beta}, B \rangle$. Let $\widetilde{\nu} = \sup f$``$\bar{\nu}$. Then
  there is a structure $\widetilde{M} = \langle
  J^{\widetilde{A}}_{\widetilde{\beta}}, \widetilde{B} \rangle$, a map
  $\widetilde{f} : \overline{M} \longrightarrow \widetilde{M}$ with $
  \widetilde{f} \supseteq f \upharpoonright J^{\overline{A}}_{\overline{\nu}}$
  and $\widetilde{f}, \Sigma_0^{(k)}$-cofinal (and hence
  $\Sigma_1^{(k)}$-preserving), and a unique $f' : \widetilde{M}
  \longrightarrow_{\Sigma_0^{(k)}} M$, with $f = f' \circ \widetilde{f} $ and
  $f' \upharpoonright \tilde{\nu} = \tmop{id} \upharpoonright \tilde{\nu}$.
\end{lemma}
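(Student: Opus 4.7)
The plan is to realise $\widetilde{M}$ as the canonical $k$-extension produced by the Pseudo-Ultrapower Theorem (Theorem~\ref{kpseudo}), taking the base map to be the restriction of $f$ to levels below $\overline{\nu}$, and then to read off the factor map $f'$ from the universal property of that extension. Concretely, I would set $\overline{Q} = J^{\overline{A}}_{\overline{\nu}}$ and $\sigma = f\upharpoonright \overline{Q}$. Since $\overline{\nu}$ is regular in $\overline{M}$ and $f$ is $\Sigma_1^{(k)}$-preserving, $\sigma$ is $\Sigma_0$-preserving with image bounded below $\widetilde{\nu} := \sup f``\overline{\nu}$; using the amenability of $\overline{A}$ and $\overline{B}$, the images $f(\overline{A}), f(\overline{B})$ truncated at $\widetilde{\nu}$ assemble into the natural $J$-initial segment $\widetilde{Q} = \langle J^{\widetilde{A}}_{\widetilde{\nu}}, \widetilde{B}\rangle$ sitting inside $M$, giving $\sigma : \overline{Q} \to_{\Sigma_0} \widetilde{Q}$.

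Feeding $(\overline{M}, \overline{\nu}, \sigma)$ into Theorem~\ref{kpseudo} yields an acceptable end-extension $\widetilde{M}$ of $\widetilde{Q}$ together with $\widetilde{f} : \overline{M} \to \widetilde{M}$ that is $\Sigma_0^{(k)}$-cofinal (and hence $\Sigma_1^{(k)}$-preserving), extends $\sigma = f\upharpoonright\overline{Q}$, and satisfies
$$\widetilde{M} \;=\; \{\, \widetilde{f}(g)(u) : g \in \Gamma^k_{\overline{M},\overline{\nu}},\ u \in \sigma(\tmop{dom}(g))\,\}.$$
This alone delivers every property demanded of $\widetilde{f}$ in the statement. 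I then define $f'$ on generators by $f'(\widetilde{f}(g)(u)) := f(g)(u)$; the right-hand side makes sense because $g \in \overline{M}$ gives $f(g) \in M$ and $u \in \sigma(\tmop{dom}(g)) \subseteq \widetilde{Q} \subseteq M$. Each $\alpha < \widetilde{\nu}$ admits a trivial representation with $g = \tmop{id}$, forcing $f'\upharpoonright\widetilde{\nu} = \tmop{id}$; on $\overline{Q}$-generators $f'\circ\widetilde{f} = f'\circ\sigma = f$; and uniqueness follows from the surjectivity of the generators onto $\widetilde{M}$ together with these two pinning conditions.

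The heart of the verification is a single \L o\'s-style equivalence: for any $\Sigma_0^{(k)}$ formula $\varphi$ and tuples $(g_i, u_i)$ of generators,
$$\widetilde{M} \models \varphi\bigl(\widetilde{f}(g_i)(u_i)\bigr) \;\Longleftrightarrow\; M \models \varphi\bigl(f(g_i)(u_i)\bigr),$$
which simultaneously gives well-definedness and $\Sigma_0^{(k)}$-preservation of $f'$. The left side unwinds, via the canonical-$k$-extension clauses (i)--(ii) of Theorem~\ref{kpseudo}, to a $\Sigma_1^{(k)}$ assertion over $\overline{M}$ about the behaviour of the $g_i$ on $\overline{\nu}$, and the right side then follows by applying $f$'s $\Sigma_1^{(k)}$-preservation to that same assertion. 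The main obstacle is precisely this bookkeeping: the $g_i \in \Gamma^k_{\overline{M},\overline{\nu}}$ may sit at different projecta levels $n < k$, so one must invoke preservation in its graded form (the stronger $\Sigma_2^{(n)}$-preservation for $n < k$ guaranteed by Theorem~\ref{kpseudo}(ii)(a), and the uniform $\Sigma_1^{(n-1)}$ Skolem functions from Lemma~\ref{facts}) and check that the canonical representations of generators at different levels interact coherently with the grading of $\widetilde{f}$ before $f$'s preservation can be applied; the cofinality clause (ii)(b) is what licenses passing from $\Sigma_0^{(k)}$- to $\Sigma_1^{(k)}$-statements at the top level, without which the equivalence would collapse.
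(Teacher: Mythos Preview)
The paper does not supply its own proof of this lemma: it is stated in Section~2 as a fine-structural prerequisite, with the reader referred to \cite{Z02} and \cite{Wehbk} for the pseudo-ultrapower machinery. Your approach---realising $\widetilde{M}$ as the canonical $k$-extension of Theorem~\ref{kpseudo} applied to $\sigma = f\upharpoonright J^{\overline{A}}_{\overline{\nu}}$ with target $J^{A}_{\widetilde{\nu}}$, and then defining the factor map $f'$ on generators via $f'(\widetilde{f}(g)(u)) = f(g)(u)$ and checking well-definedness and $\Sigma_0^{(k)}$-preservation through a \L o\'s-type equivalence---is exactly the standard construction found in those references, and is correct. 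One small point of care: the Pseudo-Ultrapower Theorem as stated requires $\sigma$ to be $\Sigma_0$-cofinal into its target $Q$, which holds here by the very definition $\widetilde{\nu} = \sup f``\overline{\nu}$; you use this implicitly but might state it.
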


\section{Global $\LBox$ in $K$.}

\begin{definition}
  \label{globalsquare}Let $\tmtextrm{\tmop{Sing}} =\{\beta \in
  \mathrm{\tmop{Ord}} \mid \lim (\beta) \wedge \mathrm{\tmop{cf}} (\beta) <
  \beta\}$ be the class of singular limit ordinals. {\em Global} $\square$
   is the assertion: there is a system $(C_{\beta})_{\beta \in
  \mathrm{\tmop{Sing}}}$ satisfying:\\
  {\hspace{2em}}(a) $C_{\beta}$ is a closed cofinal subset of $\beta$;\\
  {\hspace{2em}}(b) $\mathrm{\tmop{ot}} (C_{\beta}) < \beta ;$\\
  {\hspace{2em}}(c)\label{Coherence} if $\overline{\beta}$ is a limit point of
  $C_{\beta}$ then $\overline{\beta} \in \mathrm{\tmop{Sing}}$ and
  $C_{\overline{\beta}} = C_{\beta} \cap \overline{\beta}$.
\end{definition}

Jensen {\cite{Je72}} introduced the principle  and proved it held in
$L$. The format of the proof we shall follow will be that of
{\cite{BJW}}, which  was a proof in the setting of generalised $L
[A]$ hierarchies suitable for use Jensen's Coding Theorem. {The
second author {\cite{Welch}} proved  in the Dodd-Jensen core model
$K$.} The first proof of $\LBox$ which used the Baldwin-Mitchell
arrangement of the $L [E]$ hierarchy, was for Jensen's model for $K$
with measures of order zero, and was by \tmtextsc{Wylie}
{\cite{Wy90}}. From the order types of the square sequences
$C_{\xi}$ we shall define stationary sets $S_n$ to which we shall
apply the $\tmop{MS}$-principle.

We consider how a global $\LBox$ sequence can be derived in $K$. For
clarity we shall assume there is no inner model with a measure of
Mitchell order $o_M (\kappa) = \kappa^{+ +}$ (see {\cite{JeZe00}})
and that $K$ is built under this assumption. We assume for the rest
of this section $V = K.$ Jensen and Zeman prove (more than) the
following.

\begin{theorem}
  \label{ksquare} Let $_{} S$ be the class of all singular limit ordinals that
  are limits of admissibles. There is a uniformly definable class $\langle
  C_{\nu} | \nu \in S \rangle$ so that:

  (i) $C_{\nu}$ is a set of ordinals closed below $\nu$ and, if $\tmop{cf}
  (\nu) > \omega$, then it is also unbounded;

  (ii) $\tmop{ot} (C_{\nu}) < \nu$;

  (iii) $\overline{\nu} \in C_{\nu^{}}^{} \longrightarrow \overline{\nu} \in S
  \wedge C_{\bar{\nu}} = \overline{\nu} \cap C_{\nu} ;$
\end{theorem}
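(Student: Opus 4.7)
The plan is to follow the classical template of defining $C_\nu$ from the hull-chain of the minimal fine-structural singularizing mouse over $\nu$, adapted to the $J_s$-style hierarchy with partial extenders. For each $\nu \in S$, first locate the singularizing mouse $M(\nu)$: the least initial segment of $K$ for which $k(M(\nu),\nu)$ (Definition \ref{kmn}) is defined. Because $\nu$ is a limit of admissibles, $\nu$ is a cardinal inside every proper initial segment of $M(\nu)$, and $M(\nu)$ is sound above $\nu$; set $k_\nu = k(M(\nu),\nu)$, $p_\nu = p_{M(\nu)}$, and $q_\nu = p_\nu \cup d^{k_\nu}_{M(\nu)}$. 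The parameter $q_\nu$ is canonical and, crucially, contains the $d$-parameter whose presence in hulls is the hypothesis of Condensation Lemma II (Theorem \ref{CLII}).

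Next, for $\alpha < \nu$ I form the continuous chain of $\Sigma^{(k_\nu)}_1$-hulls
$$H^\nu_\alpha = \tilde h^{k_\nu}_{M(\nu)}\bigl(\alpha \cup \{q_\nu\}\bigr),\qquad \eta^\nu_\alpha = \sup(H^\nu_\alpha \cap \nu),$$
and define $C_\nu$ to be the closure below $\nu$ of $\{\eta^\nu_\alpha \mid \alpha < \nu\}$ (omitting trivial values and taking $C_\nu \cap \nu$). Clauses (i) and (ii) then follow by standard considerations: closure is immediate from the continuity of $\alpha \mapsto \eta^\nu_\alpha$ at limits; unboundedness when $\tmop{cf}(\nu) > \omega$ follows since any cofinal $\Sigma^{(k_\nu)}_1(M(\nu))$-function has its range absorbed into sufficiently large hulls; and $\tmop{ot}(C_\nu) < \nu$ because the hulls are indexed by $\alpha < \omega\rho^{k_\nu}_{M(\nu)} \leq \nu$.

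The core of the argument is coherence. Given $\bar\nu \in C_\nu^\ast$, set $H = \bigcup_{\alpha < \bar\nu} H^\nu_\alpha$ and let $\sigma : \bar M \to M(\nu)$ be the inverse of its transitive collapse. Standard arguments show $\sigma$ is $\Sigma^{(k_\nu)}_0$-preserving, cofinal into $\omega\rho^{k_\nu}_{M(\nu)}$, and hence $\Sigma^{(k_\nu)}_1$-preserving, with $\sigma \upharpoonright \bar\nu = \tmop{id}$ and $\bar\nu = \tmop{crit}(\sigma)$; and $d^{k_\nu}_{M(\nu)} \subseteq \tmop{ran}(\sigma)$ by construction since $d^{k_\nu}_{M(\nu)} \subseteq \{q_\nu\}$. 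Condensation Lemma II then delivers that $\bar M$ is a mouse, sound above $\bar\nu$, with $\sigma(p_{\bar M} \cup d_{\bar M}^{k_\nu}) = p_\nu \cup d^{k_\nu}_{M(\nu)}$. Pulling back the $\Sigma_1^{(k_\nu)}$-singularizing function of $\nu$ through $\sigma$ shows $\bar\nu$ is singularized over $\bar M$ at the same fine-structural level, so minimality of $M(\bar\nu)$ forces $\bar M \trianglelefteq M(\bar\nu)$, while the existence in $\bar M$ of a cofinal map into $\bar\nu$ together with soundness forces $\bar M = M(\bar\nu)$ and $k_{\bar\nu} = k_\nu$, $q_{\bar\nu} = \sigma^{-1}(q_\nu)$. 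Therefore the hull chain $(H^{\bar\nu}_\alpha)_{\alpha < \bar\nu}$ is exactly the $\sigma$-preimage of $(H^\nu_\alpha)_{\alpha < \bar\nu}$, and passing to $\eta$-values gives $C_{\bar\nu} = \bar\nu \cap C_\nu$.

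The main obstacle is the identification $\bar M = M(\bar\nu)$ at the coherence step. Condensation Lemma I leaves two nontrivial alternatives to rule out: that $\bar M$ is a proper initial segment of an ultrapower $M^+$ by some extender $E^M_\beta$ on the $E$-sequence above $\bar\nu$, and that $\bar M$ is a proper initial segment of $M(\nu)$ too small to singularize $\bar\nu$. The first is excluded because $\bar\nu \in S$ is a limit of admissibles and cannot coincide with the image of any critical point under such an ultrapower. The second is excluded because $\sigma$ transports the canonical singularizing datum of $M(\nu)$ down to $\bar M$ (here the preservation of the $d$-parameter under iteration, Lemma \ref{dpreserve}, guarantees that the $(k_\nu)$-level data match), so $\bar M$ already singularizes $\bar\nu$ at level $k_\nu$; minimality of $M(\bar\nu)$ then pins down $\bar M$ exactly. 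This careful identification, together with the match of canonical parameters, is the technical core of the proof and is where the sharpness of Condensation Lemma II and of Lemma \ref{dpreserve} is essential.
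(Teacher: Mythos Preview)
Your outline follows the classical template, but there is a genuine gap at the coherence step, and it is exactly the point where the paper's machinery diverges from the naive approach.

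The central problem is your claim that ``pulling back the $\Sigma_1^{(k_\nu)}$-singularizing function of $\nu$ through $\sigma$ shows $\bar\nu$ is singularized over $\bar M$.'' This does not follow. The singularizing function $g$ has range cofinal in $\nu$, not in $\bar\nu$; its $\Sigma_1^{(k_\nu)}$-definition interpreted in $\bar M$ (with the collapsed parameter) yields only a partial function, and $\Sigma_1^{(k_\nu)}$-elementarity of $\sigma$ goes the wrong way to guarantee its range is cofinal in $\bar\nu$. The paper confronts exactly this issue in Lemma~\ref{4.3}, and its solution is to include the ordinal $\alpha_s = \max\{\alpha < \nu_s \mid \nu_s \cap \tilde h_s(\alpha \cup \{p_s\}) = \alpha\}$ in the hull parameter $p(s)$. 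With $\alpha_s \in \operatorname{ran}(\sigma)$ one knows $\sup(\operatorname{ran}(\sigma) \cap \nu) > \alpha_s$, so the hull at that supremum must overshoot, and this overshoot is what manufactures a $\Sigma_1^{(n)}$ cofinal map into $\bar\nu$ over $\bar M$ (see the argument around (3)--(4) in the proof of Lemma~\ref{4.3}). Your parameter $q_\nu = p_\nu \cup d^{k_\nu}_{M(\nu)}$ omits $\alpha_\nu$, so this argument is unavailable and you cannot conclude $\bar M$ singularizes $\bar\nu$; without that, $\bar M \neq M(\bar\nu)$ in general and coherence fails.

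There are two secondary issues. First, your hull $H = \bigcup_{\alpha < \bar\nu} H^\nu_\alpha$ need not have $\sup(H \cap \nu) = \bar\nu$: if $\eta^\nu_\alpha \geq \bar\nu$ for some $\alpha < \bar\nu$ (nothing prevents this), then $\operatorname{crit}(\sigma) \neq \bar\nu$ and the whole setup collapses. Second, your dismissal of case (iii) of Condensation Lemma~I is not justified; the assertion that $\bar\nu$ being a limit of admissibles excludes it is not explained and is not obviously true. The paper sidesteps both issues by constructing the intermediate structure $M_{s|\lambda}$ via the Interpolation Lemma (a pseudo-ultrapower, Lemma~\ref{initseg}) rather than by direct hull collapse, and then invoking the uniqueness of singularizing structures with a given lower part to identify $M_{s|\lambda}$ with the $K$-level. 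That route never needs to argue about the trichotomy in Condensation~I at this point.
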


It is well known that once one has a global  sequence defined on the
singular ordinals of some cub class that contains all singular
cardinals and is cub beneath each successor cardinal, then this can
be filled out to a global sequence on {\tmem{all}} singular ordinals
to satisfy Definition \ref{globalsquare}. Hence proving the above
theorem suffices. As $V = K = L [E]$ for $E = E^K $ a fixed sequence
of extenders, if $\nu$ is a singular ordinal, then there will be a
least level $J^E_{\beta (\nu)}$ of the $J^E$-hierarchy over which
$\nu$ is definably singularised, {\tmem{i.e.}} there will be a
partial {\bm$\Sigma_{\omega}$}$( J^E_{\beta (\nu)}$) definable good
function  mapping a subset of some $\gamma$ cofinally into $\nu$.
This level of the hierarchy $J^E_{\beta (\nu)} $ will also be our
main singularising structure $M_{\nu}$. Note that by Lemma
\ref{Levy=*} and the soundness of the $K$ hierarchy, any such
function is also $\Sigma_1^{(n)}$($J^E_{\beta (\nu)})$ for some $n$.
That is, $k (\nu, J^E_{\beta (\nu)})$ in the sense of Definition
\ref{kmn} is defined.

However there will be many other mice over which ordinals are singularized
and we must consider these in addition.

\begin{definition}
  $S^+$ is the class of $s = \langle \nu_s, M_s \rangle$ where

  (a) $\nu_s \in \tmop{Sing} ;$

  (b) $M_s$ is a mouse satisfying the following:

  (i) $\nu_s$ is regular in $M_s$ and $J_s =_{\tmop{df}} J_{\nu_s}^{E^{M_s}}$
  is a union of admissible sets $J_{\tau}^{E^{M_s}}$;

  (ii) for some $m$, $\nu_s$ is $\tmmathbf{\Sigma^{(m)}_1} (M_s)$
  singularised, that is $k (\nu_s, M_s)$ is defined;

  (iii) $M_s $is sound above $\nu_s$, and if $\nu_s = \kappa^{+ M_s}$ where
  $\kappa \in \tmop{Card}^{M_s},$ then $M_s$ is sound above $\kappa$.
\end{definition}

Recall that if $M=\langle J_\alpha^E,\in\rangle$ and $\nu\leq
\alpha$ then $M| | \nu =_{df} \langle J_\nu^E,\in\, E_\nu \rangle$.
 We then note the following facts:

\begin{lemma}
  (i) \ If $\langle \nu, M \rangle, \langle \nu, N \rangle$ satisfy
  (b)(i),(ii) above but are both sound above $\nu$, with $M| | \nu = N| |
  \nu$,  then $M = N$.

  (ii) If $\langle \nu, M \rangle, \langle \nu, N \rangle \in S^+ $and
  $J_{\nu}^{E^M} = J_{\nu}^{E^N}$ then $M = N$.
\end{lemma}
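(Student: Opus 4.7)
This is a standard mouse-uniqueness result; the approach is to coiterate $M$ and $N$ and invoke a Dodd--Jensen style minimality argument, handling (i) first and reducing (ii) to it.

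For (i), coiterate $M$ with $N$ to obtain normal iteration maps $\pi:M\to M^\ast$ and $\sigma:N\to N^\ast$ with, by symmetry, $M^\ast\trianglelefteq N^\ast$. Since $M\| \nu=N\|\nu$, the first disagreement of the extender sequences sits at some index $\alpha\geq\nu$, so all extenders used on either side have index $\geq\nu$. Both $M$ and $N$ are sound above $\nu$, and the $\Sigma^{(m)}_1$-singularization of $\nu$ guarantees $\rho^{k+1}_M,\rho^{k+1}_N\leq\nu$ for appropriate $k$, so $M=\tilde h^{k+1}_M(\nu\cup\{p_M\})$ and similarly for $N$. If $\pi$ were nontrivial, then $M^\ast$ would contain iteration points not in $\tilde h^{k+1}_{M^\ast}(\pi(\nu)\cup\{\pi(p_M)\})$, although $\pi$-preservation of soundness data (using Lemma~\ref{dpreserve} to carry the $d$-parameters along) forces equality of these sets, a contradiction. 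Hence $\pi=\mathrm{id}$ and $M\trianglelefteq N^\ast$. A symmetric parameter-minimality comparison, matching $p_M$ with $p_{N^\ast}\setminus\nu$ through Condensation Lemma II (Theorem~\ref{CLII}), then forces $\sigma=\mathrm{id}$ as well, so $M=N$.

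For (ii), the only extra content over (i) is the agreement of the top extender $E_\nu$, since $J_\nu^{E^M}=J_\nu^{E^N}$ already supplies the $J$-hierarchy below $\nu$. Suppose for contradiction that $E^M_\nu\neq E^N_\nu$. Then in the coiteration of $M$ with $N$ the first disagreement is at index $\nu$ itself, and one applies $E^M_\nu$ (say, if it is nonempty) to the appropriate initial segment of $N$. The soundness/minimality argument of (i) then applies verbatim: the ultrapower produces an image whose standard parameter fails to lie in the $\Sigma^{(k+1)}_1$-hull of $\nu\cup\{p\}$, contradicting the soundness above $\nu$ of the comparison side. Hence $E^M_\nu=E^N_\nu$, which gives $M\|\nu=N\|\nu$, and (i) yields $M=N$.

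\textbf{Main obstacle.} The delicate step is the Dodd--Jensen minimality argument as rendered in the $\Sigma_1^{(k)}$, Friedman--Jensen-indexing setting: in a nontrivial iteration above $\nu$, one must verify that the image mouse's standard parameter cannot be captured by the hull of the image of $\nu$ together with the image of the original parameter, and this requires the full strength of Condensation Lemma II together with the $d$-parameter preservation of Lemma~\ref{dpreserve}, since the canonical witness data $\delta^M(\gamma)$ is what interlocks soundness above $\nu$ with the comparison on the other side. This is the analog in the present framework of the classical uniqueness lemma for sound premice above a given projectum (\cite{BJW}, Lemma~6.15 and its successors; cf.\ also \cite{JeZe00}), and the bookkeeping of parameters and projecta along the coiteration is where the real work sits.
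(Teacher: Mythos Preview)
Your overall strategy---coiterate and compare---is exactly what the paper intends by ``straightforward iteration and comparison,'' but the specific mechanism you give for concluding $\pi=\mathrm{id}$ does not work as written. You assert that ``$\pi$-preservation of soundness data forces equality'' of $M^\ast$ with the hull $\tilde h^{k+1}_{M^\ast}(\pi(\nu)\cup\{\pi(p_M)\})$; it does not. What preservation gives you is that $\pi``M=\tilde h^{k+1}_{M^\ast}(\nu\cup\{p_{M^\ast}\})$ (using $\pi\upharpoonright\nu=\mathrm{id}$ and $\pi(p_M)=p_{M^\ast}$), and this is a \emph{proper} subset of $M^\ast$ whenever $\pi\neq\mathrm{id}$. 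There is no contradiction at this stage, so your argument stalls before you ever reach $M\trianglelefteq N^\ast$. Condensation Lemma~II and Lemma~\ref{dpreserve} are not the relevant tools here; they concern pulling soundness back along $\Sigma^{(n)}_1$-maps, not forcing iterates to be sound.

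The standard route is: first argue $M^\ast=N^\ast$. Since $M\|\nu=N\|\nu$ and $\nu$ is a cardinal in both, every extender used has critical point $\geq\nu$, so $\mathcal P(\nu)\cap M=\mathcal P(\nu)\cap M^\ast$ and likewise for $N$. If $M^\ast$ were a proper initial segment of $N^\ast$, the new $\Sigma^{(k)}_1(M)$ subset of $\nu$ (unchanged by the iteration) would lie in $N^\ast$, hence in $N$, hence in $J^{E^N}_\nu=J^{E^M}_\nu\subseteq M$---contradiction. So $M^\ast=N^\ast$. Now both $M$ and $N$ are the transitive collapse of the \emph{same} hull $\tilde h^{k+1}_{M^\ast}(\nu\cup\{p_{M^\ast}\})$, hence $M=N$. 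No appeal to Dodd--Jensen minimality or to $d$-parameters is needed.

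For (ii) your reduction is right in spirit, but note the extra point: if $E^M_\nu\neq E^N_\nu$ then $\nu=\kappa^{+M}=\kappa^{+N}$ for the common critical point $\kappa$, and the first step of the coiteration has critical point $\kappa<\nu$. It is precisely clause~(iii) of the definition of $S^+$ (soundness above $\kappa$, not merely above $\nu$) that lets the projectum/new-subset argument go through at level $\kappa$; you should flag this rather than saying the argument of (i) applies ``verbatim.''
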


{\noindent}{\bf{Proof: }} Straightforward iteration and comparison.
\mbox{}\hfill  Q.E.D. \\

The following definition encapsulates the essential concepts
associated with  singularising structures.

\begin{definition}
  \label{squareobjects} Let $s \in S^+_{} .$ Then we associate the following
  to $\nu_s$:
  \begin{enumeratealpha}
    \item $n_s =_{\tmop{df}}  k (\nu_s, M_s)$, the least $n \in \omega$ so
    that $\nu_s$ is $\tmmathbf{\Sigma^{(n)}_1} (M_s)$ singularised over $M_s
    .$

    \item $M^l_s =_{\tmop{df}} M^{\,l,\, p_{{M_s}} \upharpoonright l}_s$ for $l
    \leq k_s, n_s .$

    \item $h^l_s =_{\tmop{df}} h^{\,l,\, p_{{M_s}} \upharpoonright l}_{M_s}$;
    \ \ \ $h_s =_{\tmop{df}} h^{n_s}_s ;$ \ \ \ $\widetilde{h_{}}_s
    =_{\tmop{df}} \widetilde{h}_{M_s}^{n_s + 1} .$

    \item $\kappa_s \simeq$ the largest cardinal of $J_s$, if such exists;
    $\omega \rho_s =_{\tmop{df}} \tmop{On} \cap M^{n_s}_s$;\\ $\beta (s)
    =_{\tmop{df}} \tmop{On} \cap M_s .$

    \item $p_s =_{\tmop{df}} p_{M_s} \backslash \nu_s$ if $\nu_s$ is a limit
    cardinal of $J_s ;$ $p_s =_{\tmop{df}} p_{M_s} \backslash \kappa_s$
    otherwise; \\ $q_s =_{\tmop{df}} p_s \cap \omega
    \rho^{n_s}_{M_s} ;$\\ $d_s =_{\tmop{df}} d^{}_{M_s}$

    \item $\alpha_s =_{\tmop{df}} \max \{\alpha < \nu | \nu \cap \tilde{h}_s
    (\alpha \cup \{p_s \}) = \alpha\}$, setting $\max {\O}= 0.$

    \item $\gamma_s \simeq \min \{\gamma < \nu | \exists f (f$ a good
    ${{\Sigma^{(n_s)}_1}}\mbox{} ^{M_s} (\{p_s \})$ function singularising $\nu$ with
    domain $\subseteq \gamma)\}$.
  \end{enumeratealpha}
\end{definition}

Thus if $\nu_s = \kappa_s^+$, we may have $\kappa_s$ in $p_s .$ Note
that the closure of the set in {\tmem{f}}) ensures that $\alpha_s$
is always defined; note also that $\alpha_s$ must be strictly less
than the first ordinal $\gamma_s$ partially mapped by
$\widetilde{h_{}}_s$ (with parameter $p_s)$ cofinally into $\nu_s$.
%We set that first ordinal $\gamma$ at {\tmem{g}}) to be $\gamma_s$.
 Note also that if we  set $\gamma' = \max \{\gamma_s, (p_{M_s}
\cap \nu_s$)+1\} \ ($\max \{\gamma_s, (p_{M_s} \cap \kappa_s$)+1\}
if $\kappa_s$ is defined), and then $\widetilde{h}_s (\gamma' \cup
p_s)$ must be cofinal in $\nu_s$ since we shall have enough
parameters in the domain of this hull to define our cofinalising
map).

\begin{lemma}
  \ \ $\omega \rho^{n_s}_{M_s} \geq \nu \geq \omega \rho^{n_s + 1}_{M_s}$
\end{lemma}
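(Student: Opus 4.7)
The two inequalities are handled independently; both are immediate once we unpack the definitions of $n_s=k(\nu_s,M_s)$ and of the projectum, using Lemma~\ref{facts}.

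\emph{Upper bound $\nu_s\geq \omega\rho^{n_s+1}_{M_s}$.} By definition of $n_s$, there is a good $\boldsymbol{\Sigma}_1^{(n_s)}(M_s)$ partial function $f$ whose domain is a bounded subset of $\nu_s$ and whose range is unbounded in $\nu_s$. Since $\nu_s$ is regular in $M_s$ (clause (b)(i) in the definition of $S^+$), the range of $f$ is a cofinal subset of $\nu_s$ that cannot belong to $M_s$. Translating to $M_s^{n_s,p_{M_s}\restriction n_s}$ via Lemma~\ref{facts}(ii)(a), this range is a $\boldsymbol{\Sigma}_1(M_s^{n_s,p})$ subset of $\nu_s$ which is not in $M_s^{n_s,p}$. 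By definition of the projectum, $\rho_{M_s^{n_s,p}}\leq \nu_s$, and hence by Lemma~\ref{facts}(ii)(b), $\omega\rho^{n_s+1}_{M_s}\leq \nu_s$.

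\emph{Lower bound $\omega\rho^{n_s}_{M_s}\geq \nu_s$.} If $n_s=0$, then $\omega\rho^{0}_{M_s}=\mathrm{On}\cap M_s$, which certainly majorises $\nu_s$ since $\nu_s\in M_s$. Suppose then that $n_s\geq 1$ and, for contradiction, that $\omega\rho^{n_s}_{M_s}<\nu_s$. By Lemma~\ref{facts}(i) applied at level $n_s$, the canonical Skolem function $\tilde{h}^{n_s}_{M_s}$ is a good, uniformly $\Sigma_1^{(n_s-1)}(M_s)$ partial function mapping $\omega\rho^{n_s}_{M_s}$ onto $M_s$. Restricting its range to $\nu_s$ (which is a set of $M_s$) yields a good $\boldsymbol{\Sigma}_1^{(n_s-1)}(M_s)$ partial function whose domain is the bounded-in-$\nu_s$ ordinal $\omega\rho^{n_s}_{M_s}$ and whose range is cofinal in $\nu_s$. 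Thus $\nu_s$ is $\boldsymbol{\Sigma}_1^{(n_s-1)}(M_s)$-singularised, contradicting the minimality of $n_s=k(\nu_s,M_s)$.

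There is no real obstacle here beyond correctly matching the definition of $k(M,\nu)$ in Definition~\ref{kmn} to the defining property of the projectum; the only point to be careful about is that in the upper bound step one must invoke the regularity of $\nu_s$ in $M_s$ to conclude the cofinal subset produced by the singularising function is not itself an element of $M_s$, and in the lower bound step one must appeal to the surjectivity clause of Lemma~\ref{facts}(i), not merely to the singularising clause.
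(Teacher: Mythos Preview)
Your proof is correct, and the argument for the second inequality is essentially the paper's. For the first inequality ($\omega\rho^{n_s}_{M_s}\geq\nu_s$) you take a more direct route: the paper assumes $\omega\rho^{n_s}_{M_s}<\nu_s$, forms the $\Sigma_1^{(n_s-1)}$-hull $\tilde h^{n_s}_{M_s}(\gamma\cup\{q,p_s\})$ for some $\gamma>\omega\rho^{n_s}_{M_s}$ containing the parameter of the $\Sigma_1^{(n_s)}$-singularising function $f$, observes that by minimality of $n_s$ this hull meets $\nu_s$ boundedly, and then invokes Lemma~\ref{sigma*} (the collapse is $\Sigma^{\ast}$-elementary since $\omega\rho^{n_s}_{M_s}$ and $p_{M_s}$ are in the range) to conclude that $\operatorname{ran}(f)$ nonetheless lies inside---a contradiction. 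Your appeal to Lemma~\ref{facts}(i) short-circuits this: the Skolem surjection $\tilde h^{n_s}_{M_s}$ is already the desired good $\Sigma_1^{(n_s-1)}$ singularisation. Two small points worth making explicit: Lemma~\ref{facts}(i) requires $p_{M_s}\!\upharpoonright\! n_s\in R^{n_s}_{M_s}$, i.e.\ $n_s$-soundness of $M_s$, which holds because $M_s$ is a mouse but is stronger than the ``sound above $\nu_s$'' clause in the definition of $S^+$; and your upper-bound step applies Lemma~\ref{facts}(ii)(a), which needs the witnessing set to lie in $H^{n_s}_{M_s}$, hence tacitly uses $\nu_s\leq\omega\rho^{n_s}_{M_s}$. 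Either reverse the order of the two halves (as the paper does), or note that if $\omega\rho^{n_s}_{M_s}<\nu_s$ then $\omega\rho^{n_s+1}_{M_s}\leq\omega\rho^{n_s}_{M_s}<\nu_s$ trivially.
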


\nod{\tmstrong{{\noindent}Proof}} Let $n = n_s$, $\nu = \nu_s$.
Suppose the first inequality failed. Then $n > 0$, and we have some
parameter $q$ with a $\Sigma_1^{(n-1)} (M_s)(\{q\})$ partial map $f$
of some $\gamma < \nu$ cofinal into $\nu$.

Pick such a $\gamma > \omega \rho^{n_{}}_{M_s}$. Let $\pi :
\widetilde{M} \longrightarrow M_s$ have range $\widetilde{h^{}}_s^n
(\gamma \cup \{q, p_s \})$, with $\widetilde{M}$ transitive. By the
leastness of $n$, $\tmop{ran} (\pi)$ cannot be unbounded in $\nu$. \
By Lemma \ref{sigma*}, since $\pi \upharpoonright \omega
\rho^{n_{}}_{M_s} = \tmop{id}$ and $p_{M_s} \in \tmop{ran} (\pi),$ \
$\pi$ is $\Sigma^{\ast}$ elementary. However then $\tmop{ran} (f)
\subseteq \tmop{ran} (\pi)$, with the former unbounded in $\nu$. A
contradiction! If the second inequality failed, then the partial
function $\tmmathbf{\Sigma}_1^{(n)} (M_s)$ singularising $\nu$ would
be a subset of $\nu_{}$ and thus a bounded subset of $\omega \rho^{n
+ 1}_{M_s}$ belonging to $M_s$. \mbox{}\hfill  Q.E.D.

\begin{definition}
  \label{basicf}For $s, \bar{s} \in S^+ :$(i) We set $f : \bar{s}
  \Longrightarrow s$ if there is $|f|$ with $|f| : J_{\overline{s}}
  \longrightarrow_{\Sigma_1} J_s,$ and $|f|$ is the restriction of some
  $f^{\ast} : M_{\bar{s}} \longrightarrow_{\Sigma^{(n)}_1} M_s$ where $n =
  n_s, \nu_s = f^{\ast} (\nu_{\bar{s}})$(if $\nu_s \in M_s) ; \kappa_s \in
  \tmop{ran} (|f|)$ (if $\kappa_s$ is defined); $\alpha_s, p_s, d_s$ are all
  in $\tmop{ran} (f^{\ast}) .$

  (ii) $\mathbb{F}=\{ \langle \bar{s}, |f|, s \rangle | f : \bar{s}
  \Longrightarrow s \}$; we write here $\bar{s} = d (f), s = r (f) ;$

  (iii) If $\nu_s \in M_s$, we set:

   $\quad p (s) =_{\tmop{df}} p_s \cup \{d_{s,}
  \alpha_s, \nu_s, \kappa_s \}$ (if $\kappa_s$ is defined);
  otherwise

  $\quad p (s) =_{\tmop{df}} p_s \cup \{d_s, \alpha_s, \kappa_s \}$ (again
  including $\kappa_s$ only if it is defined).

  (iv) $f_{(\delta, q, s)}$ is the inverse of the transitive collapse of the
  hull $\widetilde{h_{}}_s (\delta, \{p (\nu)\})$ in $M_s$.
\end{definition}

  \label{remarkf}(Lemma \ref{4.3}  will justify  in the final clause {\tmem{(iv)}} that
  there
  is some $\bar{s}$ so that $\langle \bar{s}, |f_{(\delta, q, s)} |, s \rangle
  \in \mathbb{F}$).

\begin{lemma}
  \label{determined}If $\exists \bar{s} (f : \bar{s} \Longrightarrow s$) then
  $|f|$ and $f^{\ast}$ are uniquely determined by $\tmop{ran} (|f|) \cap_{}
  \nu_s$.
\end{lemma}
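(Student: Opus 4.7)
The plan is to identify $\tmop{ran}(f^\ast)$ with a canonical hull of $M_s$ built from $X := \tmop{ran}(|f|) \cap \nu_s$ and data intrinsic to $s$. Once $\tmop{ran}(f^\ast)$ is shown to depend only on $X$ and $s$, the map $f^\ast$ is uniquely recovered as the inverse of the Mostowski collapse of this range, and $|f| = f^\ast \upharpoonright J_{\bar{s}}$ follows.

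First I would invoke the soundness of $M_{\bar{s}}$ above $\nu_{\bar{s}}$ (built into membership in $S^+$) to write $M_{\bar{s}} = \tilde{h}_{\bar{s}}(\nu_{\bar{s}} \cup \{p_{\bar{s}}\})$ --- the portion of $p_{M_{\bar{s}}}$ lying strictly below $\nu_{\bar{s}}$ is a finite set of ordinals and hence already available inside the hull generated by $\nu_{\bar{s}}$. Since $f^\ast$ is $\Sigma_1^{(n_s)}$-preserving and the Skolem function $\tilde{h}^{n_s+1}$ is uniformly $\Sigma_1^{(n_s)}$-definable, pushing the preceding expression forward gives
\[
\tmop{ran}(f^\ast) \;=\; \tilde{h}_s\bigl(f^\ast[\nu_{\bar{s}}] \cup \{f^\ast(p_{\bar{s}})\}\bigr),
\]
and $f^\ast[\nu_{\bar{s}}] = |f|[\nu_{\bar{s}}] = X$, because the ordinals of $J_{\bar{s}}$ are precisely the ordinals below $\nu_{\bar{s}}$.

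The key technical step is to identify $f^\ast(p_{\bar{s}})$ with $p_s$, so that the right-hand side above becomes $\tilde{h}_s(X \cup \{p_s\})$, an object determined solely by $X$ and $s$. This is a direct application of Condensation Lemma II (Theorem \ref{CLII}): take $\sigma = f^\ast$, and take $\alpha = \nu_s$ in the case where $\nu_s$ is a limit cardinal of $J_s$, and $\alpha = \kappa_s$ otherwise. The side conditions are all in hand: $\omega\rho^{n_s+1}_{M_s} \leq \alpha < \omega\rho^{n_s}_{M_s}$ follows from the preceding lemma together with the relevant soundness of $M_s$ (above $\nu_s$ and, when $\nu_s = \kappa_s^+$, above $\kappa_s$), while $d_s \subseteq \tmop{ran}(f^\ast)$ and $p_s \in \tmop{ran}(f^\ast)$ are guaranteed by the very definition of $\bar{s} \Longrightarrow s$. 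CLII then returns $(f^\ast)^{-1}(p_s) = p_{M_{\bar{s}}} \setminus \alpha' = p_{\bar{s}}$, where $\alpha'$ is $\nu_{\bar{s}}$ or $\kappa_{\bar{s}}$ accordingly.

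With $\tmop{ran}(f^\ast) = \tilde{h}_s(X \cup \{p_s\})$ determined by $X$ and $s$ alone, any two candidate triples $(\bar{s}_1,|f_1|,f_1^\ast)$ and $(\bar{s}_2,|f_2|,f_2^\ast)$ sharing the same $X$ produce the same range; uniqueness of the Mostowski collapse then forces $M_{\bar{s}_1} = M_{\bar{s}_2}$ and $f_1^\ast = f_2^\ast$, whence $|f_1| = |f_2|$ as well. The main obstacle is precisely the CLII application --- verifying that $p_s$ is indeed the $f^\ast$-image of $p_{\bar{s}}$ rather than of some other preimage --- but each of CLII's hypotheses on projecta, on soundness, and on the presence of the $d$-parameter in $\tmop{ran}(f^\ast)$ is secured directly by the definitions of $S^+$ and of $\Longrightarrow$.
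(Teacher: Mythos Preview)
Your argument is essentially correct, but it takes a considerably longer route than the paper's, and in doing so it imports machinery that is not needed here.

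The paper works entirely on the $M_s$ side. Since $M_s$ is sound above $\nu_s$ we have $M_s=\tilde h_s(\omega\nu_s\cup\{p_s\})$, and since $p_s\in\tmop{ran}(f^{\ast})$ is \emph{part of the definition} of $f:\bar s\Longrightarrow s$, the range $\tmop{ran}(f^{\ast})$ is a $\Sigma_1^{(n_s)}$-elementary substructure of $M_s$ containing both $p_s$ and $\tmop{ran}(|f|)$. One then simply observes (using the soundness equation and $\Sigma_1^{(n_s)}$-elementarity downwards) that $\tmop{ran}(f^{\ast})=\tilde h_s\bigl((\omega\nu_s\cap\tmop{ran}(|f|))\cup\{p_s\}\bigr)$, which is determined by $X=\tmop{ran}(|f|)\cap\nu_s$ alone. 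No Condensation Lemma is invoked.

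Your detour through $M_{\bar s}$ forces you to identify $f^{\ast}(p_{\bar s})$ with $p_s$, and for that you appeal to CLII. This is legitimate --- the hypotheses of CLII are indeed secured by the definition of $\Longrightarrow$ --- but it is work the paper never needs, because it only uses that $p_s$ \emph{lies in} $\tmop{ran}(f^{\ast})$, not that its preimage is $p_{\bar s}$. There is also a small level-matching wrinkle in your push-forward step: you write $M_{\bar s}=\tilde h_{\bar s}(\nu_{\bar s}\cup\{p_{\bar s}\})$ using $\tilde h_{\bar s}=\tilde h^{n_{\bar s}+1}_{M_{\bar s}}$, then silently switch to $\tilde h^{n_s+1}$ when pushing forward. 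This is harmless once one knows $n_{\bar s}=n_s$, but that fact is only established later in Lemma~\ref{4.3}. The paper's argument sidesteps this entirely by never leaving $M_s$.
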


\nod{\bf{Proof:}} As $M_s$ is sound above $\nu_s$, we have by our
definitions, that $\widetilde{h}_s (\omega \nu_s \cup \{p_s \}) =
M_s$. We have a $\Delta_1 (J_s)$ onto map $g : \omega \nu_s
\twoheadrightarrow J_s$. Thus, if $Y = \widetilde{h}_s (\omega \nu_s
\cap \tmop{ran} (|f|) \cup \{p_s \}) \text{, then $Y =
\widetilde{h}_s (\tmop{ran} (|f|) \cup \{p_s \})$=$\tmop{ran}
(f^{\ast})$}$.\hfill Q.E.D.\\

%In the sequel we shall not need to carefully distinguish $|f|$ or
%$f^{\ast}$ from what

Lemma \ref{determined} justifies us in calling  $f^{\ast}$ the
{\tmem{canonical extension}} of $f$, (or rather $|f|$) and sometimes
we abuse notation and write $f^{\ast} : J_{\overline{s}}
\longrightarrow_{\Sigma_1} J_s$ where more correctly we should write
$f^{\ast} \upharpoonright J_{\overline{s}} : J_{\overline{s}}
\longrightarrow_{\Sigma_1} J_s$.
%Or conversely if $f : \bar{s} \Longrightarrow s$ then we should
%properly write ``$f^{\ast} : M_{\bar{s}}
%\longrightarrow_{\Sigma_1^{(n)}} M_s$''. \
By virtue of the last
lemma, this does not cause any ambiguity.

The next two lemmata are fundamental and concern relationships
between singularising structures, and associated maps between them.

\begin{lemma}
  \label{4.3}Let $f : \overline{M} \longrightarrow_{\Sigma_1^{(n)}} M_s$;
  suppose $f ( \bar{d}, \bar{\alpha}, \bar{p}) = d_s, \alpha_s, p_s$, and
  (where appropriate) $f ( \bar{\kappa}, \bar{\nu}) = \kappa_s, \nu_s$. (The
  latter if $\nu_s \in M_s$ ; if $\nu_s = \tmop{On} \cap M_s$ then we take
  $\bar{s} = \tmop{On} \cap M_{\bar{s}} .)$ Then \ $\bar{s} = ( \bar{\nu},
  \overline{M}) \in S^+_, $ and thus $f : \bar{s} \Longrightarrow s$; moreover
  $n, \bar{d}, \bar{\alpha}, \bar{p}, \bar{\kappa}$ (the latter defined if
  $\kappa_s$ is) \ are $n_{\bar{s}}, d_{\bar{s}}, \alpha_{\bar{s}},
  p_{\bar{s}}, \kappa_{\bar{s}}$.
\end{lemma}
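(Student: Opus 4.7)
My strategy is to obtain the bulk of the lemma from Condensation Lemma II (Theorem \ref{CLII}) and then to identify the remaining parameters one at a time. I split according to Definition \ref{squareobjects}(e): either $\nu_s$ is a limit cardinal of $J_s$ (so $\kappa_s$ is undefined and $p_s = p_{M_s}\setminus\nu_s$) or $\nu_s = \kappa_s^{+M_s}$ (so $p_s = p_{M_s}\setminus\kappa_s$). Let $\alpha^{\ast}$ denote $\nu_s$ in the former case and $\kappa_s$ in the latter, with $\bar\alpha^{\ast}$ its $f$-preimage. In both cases the hypotheses of Theorem \ref{CLII} are met: $M_s$ is sound above $\alpha^{\ast}$ by clause (b)(iii) of the definition of $S^+$; by soundness and the lemma just preceding Definition \ref{basicf} one has $\omega\rho_{M_s}^{n+1} \le \alpha^{\ast} < \omega\rho_{M_s}^{n}$; and $d_s \subseteq \tmop{ran}(f)$ because $f(\bar d) = d_s$. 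Applying Theorem \ref{CLII} yields that $\overline M$ is a mouse, sound above $\bar\alpha^{\ast}$, with $\bar p = p_{\overline M}\setminus\bar\alpha^{\ast}$, and $f$ commuting with the $\delta$-function on $\tilde p_{\overline M}\setminus\bar\alpha^{\ast}$.

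Next I would verify the remaining clauses of $S^+$. That $\bar\nu$ is regular in $\overline M$ and that $J_{\bar\nu}^{E^{\overline M}}$ is a union of admissibles both reduce to statements over $J$-structures that pass back through the $\Sigma_1$-preserving map $|f| \colon J_{\bar s} \longrightarrow J_s$ (for regularity, one additionally uses $\Sigma_0^{(n)}$-preservation of $f$ to rule out a cofinalising function in $\overline M$ at level $n$). For the equality $n = n_{\bar s}$: the good $\Sigma_1^{(n)}$-formula exhibiting singularisation of $\nu_s$ over $M_s$ from $p_s$ pulls back verbatim to $\overline M$ with parameter $\bar p$, giving $n_{\bar s} \le n$. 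The reverse inequality uses minimality of $n_s = n$: were $\bar\nu$ already $\Sigma_1^{(m)}$-singularised over $\overline M$ for some $m < n$, the defining formula and the higher preservation of $f$ at level $m$ (at least $\Sigma_2^{(m)}$ since $m < n$) would deliver a corresponding singularisation of $\nu_s$ over $M_s$ at level $m$, contradicting minimality.

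The remaining identifications fall out quickly. Since $\bar p = p_{\overline M}\setminus\bar\alpha^{\ast}$ by Condensation Lemma II and $\bar\alpha^{\ast}\in\{\bar\nu,\bar\kappa\}$, we have $\bar p = p_{\bar s}$ by unpacking Definition \ref{squareobjects}(e). The equality $\bar d = d_{\bar s}$ follows from the $\delta$-commutation given by Theorem \ref{CLII} and injectivity of $f$. For $\bar\kappa = \kappa_{\bar s}$ when $\kappa_s$ is defined: ``being the largest cardinal of $J$'' is $\Sigma_1$ and so preserved by $|f|$. Finally $\bar\alpha = \alpha_{\bar s}$ is the main point. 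The defining condition in Definition \ref{squareobjects}(f), ``$\nu \cap \tilde h_s(\beta \cup \{p_s\}) = \beta$'' together with maximality, is, by uniform definability of $\tilde h$, a $\Sigma_1^{(n)}$-style statement about $M_s$ in the parameters $\nu_s,\alpha_s,p_s$; the preservation of $f^{\ast}$ together with the fact that all these parameters lie in its range then transfers the statement to $\overline M$, yielding $\alpha_{\bar s} = \bar\alpha$.

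The main obstacle is the identification $\bar\alpha = \alpha_{\bar s}$. One must check that no $\bar\beta \in (\bar\alpha, \bar\nu)$ fulfils $\bar\nu \cap \tilde h_{\bar s}(\bar\beta \cup \{\bar p\}) = \bar\beta$; the natural route is to assume such a $\bar\beta$, apply $f^{\ast}$ to obtain $\nu_s \cap \tilde h_s(f(\bar\beta)\cup\{p_s\}) = f(\bar\beta) > \alpha_s$, and contradict the maximality in the definition of $\alpha_s$. Executing this cleanly requires a preimage-chasing argument for elements $\xi < f(\bar\beta)$ that are themselves not in the range of $f$: one must show that $\tilde h_s(\xi, \{p_s\})$, if it lies below $\nu_s$, must in fact lie below $f(\bar\beta)$. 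A parallel delicacy is ensuring the case split between $\nu_s$ limit and $\nu_s$ successor in $J_s$ is reflected correctly by $|f|$; both these points would be the focus when writing the full proof.
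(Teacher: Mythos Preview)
Your use of Condensation Lemma~II for $\bar p$, $\bar d$, and soundness is correct and matches the paper. The identification $\bar\alpha = \alpha_{\bar s}$, which you flag as the main obstacle, is in fact routine once one packages the closure condition as the $\Pi_1^{(n)}$ relation $H(\xi^n,\zeta^n) \longleftrightarrow \tilde h_s(\omega\xi^n \cup \{p_s\}) \cap \nu_s \subseteq \zeta^n$: then $H(\alpha_s,\alpha_s)$ reflects down, and for each $\bar\xi \in (\bar\alpha,\bar\nu)$ the $\Sigma_1^{(n)}$ statement $\neg H(f(\bar\xi),f(\bar\xi))$ (which holds in $M_s$ since $f(\bar\xi) > \alpha_s$) reflects down as well. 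No chasing of non-range elements below $f(\bar\beta)$ is needed.

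The genuine gap is in your argument for $n_{\bar s} \le n$, where you assert that the singularisation ``pulls back verbatim.'' It does not. The assertion that some $\gamma < \nu_s$ has $\tilde h_s(\gamma \cup \{p_s\})$ cofinal in $\nu_s$ is of $\Sigma_2^{(n)}$ type, and crucially the witness $\gamma_s$ need not lie in $\tmop{ran}(f)$, so there is no formula with parameters in $\tmop{ran}(f)$ that you can simply reflect. The paper's argument here is the real content of the lemma and explains \emph{why} one insists that $\alpha_s \in \tmop{ran}(f)$. Set $\tau = \sup f$``$\bar\nu$. If there is some $\bar\xi < \bar\nu$ with $\tilde h_s(f(\bar\xi)\cup\{p_s\})$ already cofinal in $\nu_s$, one reflects the $\Sigma_1^{(n)}$ instances (one for each $\bar\zeta<\bar\nu$) and is done. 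Otherwise $\tau \le \gamma_s < \nu_s$; but $\alpha_s < \tau$ since $\alpha_s \in \tmop{ran}(f\upharpoonright\bar\nu)$, so by the defining maximality of $\alpha_s$ there exist $i<\omega$ and $\xi^n < \tau$ with $\tau \le \tilde h_s(i,\langle\xi^n,p_s\rangle) < \nu_s$. Picking $\bar\delta<\bar\nu$ with $f(\bar\delta) > \xi^n$, one sees that for every $\bar\mu < \bar\nu$ the $\Sigma_1^{(n)}$ statement $\exists\zeta^n < f(\bar\delta)\,(f(\bar\mu) \le \tilde h_s(i,\langle\zeta^n,p_s\rangle) < \nu_s)$ holds in $M_s$ and reflects down, whence $\bar h(\bar\delta\cup\{\bar p\})$ is cofinal in $\bar\nu$. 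Your plan omits this case analysis entirely, and with it the only place in the proof where the hypothesis $\alpha_s\in\tmop{ran}(f)$ does real work.
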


%(Remark: we shall drop the qualification ``when defined'' from now
%on and also shall let the reader \ include or exclude the ordinal
%$\nu_s$ as appropriate.)

{\tmstrong{Proof}} \ We shall show that $\overline{M}$ is a
singularising structure for $\bar{\nu} =_{\tmop{df}} \nu_{\bar{s}}$
and the other mentioned parameters have the requisite properties to
satisfy the relevant definitions, and  are moved correctly by $f$.
We set $\nu = \nu_s$.

(1) $\bar{p} = p_{\bar{M}} \backslash \overline{\nu} ; \bar{d} = d^n_{\bar{M}}
\backslash \bar{\nu}${\tmem{ and}} $\overline{M}${\tmem{ is sound above
$\bar{\nu}$.}}

Proof: Directly by the Condensation Lemma II Theorem \ref{CLII}
\mbox{}\hfill Q.E.D. (1) \\

Let $\overline{h}$ have the same functionally absolute definition
over \ $\overline{M}$ as $\widetilde{h}_s $ does over $M_s.$
$\overline{h}$ is thus \ $\tmmathbf{\tmmathbf{\Sigma}_1^{(n)}} (
\overline{M}) .$

(2) {\tmem{$\overline{\alpha} $ is defined from $ \overline{M}_{}$
as $\alpha$ was defined from }}$M_s .$

Proof: Set $H (\xi^n, \zeta^n) \longleftrightarrow \widetilde{h}_s (\omega
\xi^n \cup \{p_s \}) \cap \nu \subseteq \zeta^n$

\quad\quad $\overline{H} (\xi^n, \zeta^n) \longleftrightarrow
\bar{h} (\omega \xi^n \cup \{ \bar{p} \}) \cap \bar{\nu} \subseteq
\zeta^n .$

Then $H$ is $\Pi_1^{_{} (n) M_s^{}} (\{p_s \}),$  and $\overline{H}$
$\Pi^{(n) \bar{M}}_1 (\{ \bar{p} \}),$ by the same definition. As
$M^{}_s \models H (\alpha, \alpha)$ it follows that $\overline{M}
\models \overline{H} ( \bar{\alpha}, \bar{\alpha})$. However for any
$\xi$ with $\bar{\alpha} < \xi^n < \bar{\nu}$ we must have
$\overline{M} \models \neg \overline{H} (\xi^n, \xi^n)$, because
$M^{}_s \models \neg H (f (\xi^n), f (\xi^n))$ since $\alpha < f
(\xi^n) < \nu$. Hence $\bar{\alpha}$ is defined in the requisite
way. \mbox{}\hfill Q.E.D. (2)\\

(3) $\exists \overline{\xi}^n < \bar{\nu} ( \tilde{h}_s (f ( \bar{\xi}^n) \cup
\{p_s \})$ \tmtextit{is unbounded in} $\nu)$.\\

{\noindent} If (3) were to hold for some $\overline{\xi}$ then
$\bar{h} (\xi \cup \{ \bar{p} \})$ would be cofinal in $\bar{\nu}$,
since the following is a $\Pi^{(n)}_2$ expression which thus would
go down to $\overline{M}$.  It would then be a statement about the
parameters $\bar{p}, \bar{h}$, $\bar{\xi}$ and $\bar{\nu}$ (the
latter if $\nu = f ( \bar{\nu}) < \omega \rho_s$):

$M_s \models (\forall \zeta^n < \nu) (\exists \delta^n < f (
\bar{\xi}^n)) (\exists i < \omega) (\zeta^n < \tilde{h}_s (i,
\langle \delta^n, p_s \rangle) < \nu).$

{\noindent} This would show that $\bar{h}$ is a  singularising
function for $\bar{\nu}$ over the structure $\overline{M}$ and that
$n \geq n_{\bar{s}}$. We need to show that (3) holds. Suppose not.
This has the consequence that $\tau =_{\tmop{df}} \sup
f$``$\bar{\nu} \leq \gamma_s < \nu$. As $\alpha_s\in\tmop{ran} (f
\upharpoonright \bar{\nu})$ we have that $\alpha_s <\tau$. So by
definition of $\alpha_s$ itself:\\

(4) $\tau \neq \nu \cap \tilde{h}_s (\tau \cup \{p_s \})$.\\

 Hence the following
is true in $M_s$:
\[ \exists i \in \omega \exists \xi^n < \tau (\nu > \tilde{h}_s (i, \langle
   \xi^n, p_s \rangle) \geq \tau) . \]
{\noindent} Let $i, \xi^n$ witness this, and pick $\bar{\delta} < \bar{\nu}$
so that $f ( \bar{\delta}) > \xi^n .$ Then for any $\bar{\mu} < \bar{\nu},$ as
$f ( \bar{\mu}) < \tau$:

$M_s \models (\exists \zeta^n < f ( \bar{\delta})) (\nu > \tilde{h}_s (i,
\langle \zeta^n, p_s \rangle) \geq f ( \bar{\mu}))$.

{\noindent} This is $\Sigma^{(n)}_1$ and hence, for all $\bar{\mu} <
\bar{\nu}$, goes down to $\overline{M}$, yielding:

$\overline{M} \models \forall \bar{\mu} < \bar{\nu} \exists \zeta^n <
\bar{\delta} ( \bar{\nu} > \bar{h} (i, \langle \zeta^n, \bar{p} \rangle \geq
\bar{\mu})$.

Hence $\bar{h}$ is a  singularising function for $\bar{\nu}$. Thus
whether (3) holds or not we have established the existence of
suitable $\Sigma_1^{(n)} ( \overline{M})$ singularising function.

(5) $n = n_{\bar{s}}$.

We are left with showing $n \leq n_{\bar{s}}$, as the above shows
that $n \geq n_{\bar{s}}$. Suppose $m < n$ and that $\bar{g}$ is a
$\Sigma_1^{(m)} ( \overline{M})$ good function in the parameter
$\bar{r}$. Let $g$ be $\Sigma_1^{(m)} (M_s)$ using the same
functionally absolute definition and the parameter $f ( \bar{r})$.
Suppose $\bar{\delta} < \bar{\nu}$. By the
$\Sigma^{(n)}_1$-elementarity of $f$ we have the following
$\Sigma_1^{(n)}$ statement holds in $M_s$ (as ran($g \upharpoonright
f ( \bar{\delta}))$ is bounded in $\nu$):
\[ (\exists \xi^n < \nu) (\forall \zeta^m < f ( \bar{\delta})) (\forall \eta^m
   < \nu) (g (\zeta^m) = \eta^m \longrightarrow \eta^m < \xi^n) \]
{\noindent} (assuming $\nu < \beta ;$ otherwise drop the bound $\nu
.)$ As $f$ is  $\Sigma_1^{(n)}$-preserving, we have in
$\overline{M}$:
\[ (\exists \xi^n < \bar{\nu}) (\forall \zeta^m < \bar{\delta}) (\forall
   \eta^m < \bar{\nu}) ( \bar{g} (\zeta^m) = \eta^m \longrightarrow \eta^m <
   \xi^n) \]
{\noindent} As $\bar{\delta}$ was arbitrary, we conclude $\tmop{ran}
( \bar{g} \upharpoonright \xi)$ is bounded on any $\xi < \bar{\nu}
.$ Hence $n \leq n_{\bar{s}} .$  \mbox{ }\hfill Q.E.D.(5) and Lemma.\\

\begin{definition}
  Suppose $f : \bar{s} \Longrightarrow s.$ Then let $\lambda (f) =_{\tmop{df}}
  \sup f$``$\overline{\nu}$; $\rho (f) =_{\tmop{df}} \sup
  f$``$\rho_{\bar{\nu}} .$
\end{definition}

\begin{lemma}
  \label{initseg}Suppose $f : \bar{s} \Longrightarrow s,$ and let $\lambda =
  \lambda (f)$. Then $\lambda \in \tmop{Sing}$ and there exists a unique $f_0
  : \bar{s} \Longrightarrow s' = s| \lambda$ with $f \upharpoonright
  \overline{\nu} = f_0 \upharpoonright \overline{\nu} .$
\end{lemma}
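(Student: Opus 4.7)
The strategy is to apply the Interpolation Lemma to the canonical extension $f^{\ast}: M_{\bar{s}} \longrightarrow_{\Sigma_1^{(n)}} M_s$ (where $n = n_s = n_{\bar{s}}$ by Lemma \ref{4.3}), factor it as $f^{\ast} = f' \circ \widetilde{f}$, and identify the intermediate structure $\widetilde{M}$ as the singularising structure $M_{s'}$ at $\lambda$. If $\lambda = \nu_s$ then $f^{\ast}$ is already cofinal into $\nu_s$ and one may simply take $f_0 = f$; so assume $\lambda < \nu_s$. Then $\lambda$ is singular: $\tmop{cf}(\lambda) \leq \tmop{cf}(\bar{\nu}) < \bar{\nu} \leq \lambda$, using $\bar{\nu} \in \tmop{Sing}$ (since $\bar{s} \in S^+$) and the fact that $f \upharpoonright \bar{\nu}$ is an order-preserving injection into $\lambda$.

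Since $\bar{\nu}$ is regular in $M_{\bar{s}}$ with $k(M_{\bar{s}}, \bar{\nu}) = n$ defined, the Interpolation Lemma (Lemma \ref{interpolation}) applied to $f^{\ast}$ yields a transitive structure $\widetilde{M}$, a $\Sigma_0^{(n)}$-cofinal (hence $\Sigma_1^{(n)}$-preserving) map $\widetilde{f}: M_{\bar{s}} \to \widetilde{M}$ with $\widetilde{f} \supseteq f \upharpoonright J^{M_{\bar{s}}}_{\bar{\nu}}$, together with $f': \widetilde{M} \longrightarrow_{\Sigma_0^{(n)}} M_s$ satisfying $f^{\ast} = f' \circ \widetilde{f}$ and $f' \upharpoonright \lambda = \tmop{id}$. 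Soundness of $M_{\bar{s}}$ above $\bar{\nu}$ transfers, via the cofinality of $\widetilde{f}$ into $\omega\rho^n_{\widetilde{M}}$, to soundness of $\widetilde{M}$ above $\lambda$.

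Now apply Condensation Lemma I (Theorem \ref{CLI}) to $f'$. Alternative (i) is excluded because $f'$ is the identity on $\lambda < \nu_s$, so it is not a non-trivial iteration map from the core of $M_s$. Alternative (iii) (an extender-ultrapower situation) is ruled out using that $d_s \subseteq \tmop{ran}(f^{\ast}) \subseteq \tmop{ran}(f')$, a condition built into the definition of $f: \bar{s} \Longrightarrow s$: having the $d$-parameter of $M_s$ in the range pins down the structure and prevents the extender alternative. Hence alternative (ii) applies and $\widetilde{M}$ is a proper initial segment of $M_s$; setting $s' = \langle \lambda, \widetilde{M}\rangle$ then gives $s' = s|\lambda$ in the intended sense. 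An argument parallel to Lemma \ref{4.3}, applied to $\widetilde{f}$ rather than $f^{\ast}$, and using the $\Sigma_1^{(n)}$-preservation of $\widetilde{f}$, verifies that $\widetilde{f}(\bar{p}, \bar{d}, \bar{\alpha}, \bar{\kappa}) = p_{s'}, d_{s'}, \alpha_{s'}, \kappa_{s'}$, so that $\widetilde{f}$ witnesses $\bar{s} \Longrightarrow s'$.

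Set $f_0 = \widetilde{f}$. By the interpolation condition $\widetilde{f} \supseteq f \upharpoonright J^{M_{\bar{s}}}_{\bar{\nu}}$ we have $f_0 \upharpoonright \bar{\nu} = f \upharpoonright \bar{\nu}$ as required. Uniqueness of $f_0$ is immediate from Lemma \ref{determined}: any $g_0 : \bar{s} \Longrightarrow s'$ agreeing with $f$ on $\bar{\nu}$ has $|g_0|$ and $g_0^{\ast}$ determined by $\tmop{ran}(|g_0|) \cap \lambda$. The main obstacle will be the exclusion of alternative (iii) in Condensation I, which requires careful use of the $d$-parameter information (with Lemma \ref{dpreserve} as the relevant preservation tool), together with the explicit verification that the parameters of $\widetilde{M}$ coincide with the $\widetilde{f}$-images of those of $M_{\bar{s}}$.
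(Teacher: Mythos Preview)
Your overall architecture---apply the Interpolation Lemma to $f^{\ast}$, obtain $\widetilde{M}$ and the factoring $f^{\ast}=f'\circ\widetilde{f}$, then set $f_0=\widetilde{f}$---matches the paper exactly. The difficulties lie in the two middle steps you sketch.

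\textbf{The Condensation Lemma I detour is both unnecessary and unjustified.} The paper never invokes CLI here: it simply \emph{defines} $M_{s'}=\widetilde{M}$ as the interpolated structure and verifies directly that $s'=\langle\lambda,\widetilde{M}\rangle\in S^{+}$. What is needed for the notation $s\vert\lambda$ (and for all later uses) is only that $J^{E^{\widetilde{M}}}_{\lambda}=J^{E^{M_s}}_{\lambda}$, which is immediate from $f'\upharpoonright\lambda=\mathrm{id}$; the uniqueness Lemma 3.2(ii) then makes $s\vert\lambda$ well-defined. Your exclusion of alternative (iii) by appealing to $d_s\in\mathrm{ran}(f')$ is not sound: the $d$-parameters enter in Condensation Lemma~II (parameter preservation), not in distinguishing the three cases of CLI. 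Likewise your dismissal of (i) is too quick---nothing you wrote rules out that $\widetilde{M}$ is the core of $M_s$ above $\lambda$ with $f'$ the core map.

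\textbf{The appeal to ``an argument parallel to Lemma~\ref{4.3}'' hides a genuine direction problem.} Lemma~\ref{4.3} takes a $\Sigma_1^{(n)}$-map into a structure already known to lie in $S^{+}$ and, via Condensation~II (which needs $d^n$ of the \emph{target} in the range), concludes the \emph{source} lies in $S^{+}$ with the right parameters. Here you want information about the \emph{target} $\widetilde{M}$. Applying the template along $\widetilde{f}$ goes the wrong way; applying it along $f'$ fails because the Interpolation Lemma only gives $f'$ as $\Sigma_0^{(n)}$-preserving, not $\Sigma_1^{(n)}$. The paper therefore does the work by hand in four separate steps: (1) $n=n_{s'}$ by a direct singularisation/regularity argument using the cofinality of $\widetilde{f}$; (2) $\widetilde{f}(p_{\bar{s}})=p_{s'}$ by transferring solidity upward along $\widetilde{f}$ and then using the $\Sigma_0^{(n)}$-map $f'$ to rule out a strictly smaller parameter; (3) $\widetilde{f}(d_{\bar{s}})=d_{s'}$ by a pseudo-ultrapower analogue of Lemma~\ref{dpreserve}; (4) $\widetilde{f}(\alpha_{\bar{s}})=\alpha_{s'}$, only part of which resembles Lemma~\ref{4.3}. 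None of (1)--(3) is the Lemma~\ref{4.3} pattern, and (2) in particular needs the factor map $f'$, which your sketch does not mention for this purpose.
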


\nod{\bf Proof: }\ Let $n = n_s$. We apply directly the
Interpolation Lemma with $\lambda$ as $\widetilde{\nu}$,
$M_{\bar{s}}$, $M_s$ as $\overline{M}, M$ respectively, and using
$f^{\ast} : M_{\overline{s}} \longrightarrow_{\Sigma_1^{(n)}} M_s$
(where $f^{\ast}$ is the canonical extension of $f)$ we have the
structure $\widetilde{M} = M_{s'}$ and maps $\widetilde{f}, f'$ as
specified. \ \

%{ Diagram? }

(1) $ s' = \langle \lambda, \widetilde{M} \rangle \in S^+, n = n_{s'} .$

By the comment above $\gamma_{\bar{s}}$ is defined and $n =
n_{\bar{s}}$. As $\widetilde{h}_{\bar{s}} (\gamma_{\bar{s}} \cup
\{p_{M_{\bar{s}}}, r\})$ is cofinal in $\overline{\nu}$ for some
parameter $r$ then $\lambda \cap \widetilde{h^{}}^{n +
1}_{\widetilde{M}} ( \widetilde{f} (\gamma_{\! \bar{s}}) \cup \{p',
\widetilde{f} (r)\})$ is cofinal in $\lambda$ (setting $p' =
\widetilde{f} (p_{\! \overline{s}}) = f'^{- 1} (p_s)$). Thus
$\lambda$ is $\Sigma_1^{(n)}$-singularised over $\tilde{M}$. Hence
$n \geq n_{s'} .$ \ We need to show that $\lambda$ is not
$\Sigma_1^{(n - 1)}$-singularised over $\tilde{M}$. Suppose this
fails and thus that $\{\alpha | \sup (\lambda \cap
\widetilde{h^{}}^n_{\widetilde{M}} (\alpha \cup \{r\})) = \alpha\}$
is bounded in $\lambda$, by $\alpha'$ say, for some choice of a
parameter $r \in \widetilde{M} = M_{s'}$. By the construction of the
pseudo-ultrapower we may assume that $r$ is of the form
$\widetilde{f} ( \bar{g}_0) (\eta)$ for some good
$\tmmathbf{\Sigma}^{(n - 1)}_1 (M_{\overline{s}})$ function
$\overline{g}_0$ and some $\eta < \lambda$. Define\\

\nod$\widetilde{H} (\xi^n, \zeta^n, d) \longleftrightarrow
\widetilde{h^{}}^n_{\tilde{M}} (\omega \xi^n \cup \{d\}) \cap
\lambda \subseteq \zeta^n ; \bar{H} (\xi^n, \zeta^n, d)
\longleftrightarrow \widetilde{h}^n_{\bar{s}} (\omega \xi^n \cup
\{d\}) \cap \bar{\nu} \subseteq \zeta^n .$\\

These are (uniformly defined) $\Pi^{(n)}_1$ relations over their
respective structures - in the parameters $\lambda, \bar{\nu}$. By
the leastness in the definition of $n_{\bar{s}} $ we have that there
are arbitrarily large $\bar{\tau}^n < \bar{\nu}$ with
$\widetilde{h}^n_{\bar{s}} (\omega \bar{\tau}^n \cup \{p_{\bar{s}}
\}) \cap \bar{\nu} \subseteq \bar{\tau}^n$; using the soundness of
$M_{\bar{s}}$ above $\bar{\nu}$, this implies that for arbitrary
$\zeta^n < \bar{\tau}$: $ \widetilde{h}^n_{\bar{s}} (i,
\overline{\xi^{}}^n, \bar{g}_0 (\zeta^n)) \cap \bar{\nu} \subseteq
\bar{\tau}^n$. In other words:

$\forall \zeta^n < \bar{\tau}^n  \overline{H} ( \bar{\tau}^n, \bar{\tau}^n,
\bar{g}_0 (\zeta^n))$.

As the substituted $\bar{g}_0$ is good $\Sigma_1^{(n - 1)}$ we have that this
is a $\Pi^{(n)}_1$ statement, and so is preserved upwards to $M_{s'} :$

$\forall \zeta^n < \widetilde{f} ( \bar{\tau}^n) \tilde{H} ( \widetilde{f} (
\bar{\tau}^n), \widetilde{f} ( \bar{\tau}^n), \widetilde{f} ( \bar{g}_0)
(\zeta^n))$.

However as $\widetilde{f} \upharpoonright \bar{\nu}$ is cofinal into
$\lambda$, we may choose $\bar{\tau}^n$ so that $\widetilde{f} ( \bar{\tau}^n)
> \max \{\alpha', \eta\}$. This contradicts our definition of $\alpha'$. \ \ \
\ \mbox{}\hfill  Q.E.D.(1)\\

(2) $p' = p_{s'} .$

By the pseudo-ultrapower construction, we have $\widetilde{M} =
\widetilde{h^{}}^{n + 1}_{\widetilde{M}} (\lambda \cup p')
=\widetilde{h^{}}^{n + 1}_{\widetilde{M}} ( \tilde{\kappa} \cup p')$
(where $\tilde{\kappa} = \widetilde{f} (\kappa_{\overline{s}})$ if
$\kappa_{\overline{s}}$ is defined) and is sound above $\lambda$ (or
$\tilde\kappa$). The solidity of $p_{\overline{s}}$ above $\bar\nu$
transfers via the $\Sigma^{(n)}_1$-preserving map $f'$ to show that
$p'$ is solid above $\lambda$ (see \cite{Z02} 3.6.8). Then the
minimality of the standard parameter and the definition of $p_{s'}$
shows that $p_{s'} \leq^{\ast} p'$. However if $p_{s'} <^{\ast} p'$
held, we should have for some $i \in \omega, \vec{\xi}$ that $p' =
\widetilde{h^{}}^{n + 1}_{\widetilde{M}} (i, \langle \vec{\xi},
p_{s'} \rangle)$, and thus $p_s = \widetilde{h^{}}^{n + 1}_s (i,
\langle f' ( \vec{\xi}), f' (p_{s'}) \rangle)$ whence $M_s =
\widetilde{h^{}}^{n + 1}_s (\nu \cup f' (p_{s'}))$. This is a
contradiction as $ f' (p_{s'}) <^{\ast} p_s .$ \mbox{}\hfill
Q.E.D.(2)\\

(3) If $\widetilde{d} =_{\tmop{df}}  \widetilde{f} (d_{\overline{s}}$) then
$\widetilde{d} = d_{s'} .$

Proof: This is very similar to Lemma \ref{dpreserve}, using the
$\Sigma_1^{(n)}$-preservation properties of $\widetilde{f}$, and is
left to the reader. \ \ \mbox{}\hfill Q.E.D.(3)\\

(4) If $\widetilde{\alpha} =_{\tmop{df}}  \widetilde{f}
(\alpha_{\overline{s}}$) then $\widetilde{\alpha} = \alpha_{s'} .$

That $\widetilde{\alpha}$ is sufficiently closed, and hence
$\widetilde\alpha\leq \alpha_{s'}$, is proven as in (2) of Lemma
\ref{4.3} using: \ $\widetilde{H} (\xi^n, \zeta^n)
\longleftrightarrow h_{s'} (\omega \xi^n \cup \{p_{\lambda} \}) \cap
\lambda \subseteq \zeta^n$; $\bar{H} (\xi^n, \zeta^n)
\longleftrightarrow h_{\bar{s}} (\omega \xi^n \cup \{p_{\bar{s}} \})
\cap \bar{\nu} \subseteq \zeta^n .$ For $\widetilde{\alpha}
<\eta^n<\lambda$ we  set $\bar\eta = f^{ -1}$``$\eta^n$. Then we
have $\neg \widetilde{H}(\bar\eta,\bar\eta)$ (as $\bar\eta
>\alpha_{\bar s}$). Hence for some $i\in \omega$, some
$\bar\xi<\bar\eta$ we have $\eta\leq h_{\bar s}(i,\langle
\bar\xi,p_{\bar s}\rangle) < \bar \nu$. As $f(\bar\eta)\geq \eta^n$
and as $\widetilde{f}$ is $\Sigma^{(n)}_0$-preserving we have
$\eta^n\leq h_{s'}(i,\langle \widetilde{f}(\bar\xi),p_{ s'}\rangle)
< \lambda$. \mbox{}\hfill Q.E.D.(4)

We have shown enough now to set that $f^{\ast}_0 = \widetilde{f}$.
\mbox{}\hfill Q.E.D.(Lemma)
\begin{lemma}
  \label{suprema}Suppose $f : \bar{s} \Longrightarrow s$ and $k_s = n_s$. Then
  $\lambda (f) < \nu_s \longleftrightarrow \rho (f) < \rho_s$.
\end{lemma}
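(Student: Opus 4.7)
I work with the canonical extension $f^{\ast}:M_{\bar s}\longrightarrow_{\Sigma_1^{(n)}}M_s$, where $n=n_s=n_{\bar s}$ by Lemma \ref{4.3}. The central tools are: (a) soundness of $M_{\bar s}$ above $\bar\nu$, which expresses every ordinal in $\rho_{\bar s}$ as a value of $\widetilde{h}^{n}_{M_{\bar s}}$ applied to an argument in $\bar\nu$ together with $p_{\bar s}$; (b) $\Sigma_1^{(n-1)}$-preservation under $f^{\ast}$, which carries such hull-representations upstairs; and (c) the minimality in the definition of $n_s=k(M_s,\nu_s)$. The hypothesis $k_s=n_s$ is in force to guarantee $\omega\rho_s>\nu_s$, so that the equivalence is not vacuous.

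For the direction $\lambda(f)<\nu_s\Longrightarrow\rho(f)<\rho_s$, set $\eta=\lambda(f)<\nu_s$. For any $\bar\xi<\rho_{\bar s}$, write $\bar\xi=\widetilde{h}^{n}_{M_{\bar s}}(i,\langle\bar\zeta,p_{\bar s}\rangle)$ with $i<\omega$ and $\bar\zeta<\bar\nu$. Then $f^{\ast}(\bar\xi)=\widetilde{h}^{n}_{M_s}(i,\langle f(\bar\zeta),p_s\rangle)\in\widetilde{h}^{n}_{M_s}(\eta\cup\{p_s\})$, so $f$``$\rho_{\bar s}\subseteq\widetilde{h}^{n}_{M_s}(\eta\cup\{p_s\})\cap\rho_s$. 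It suffices to show this hull is bounded in $\rho_s$. If it were cofinal, composing with a good $\Sigma_1^{(n)}(M_s)$-function witnessing $\omega\rho^{n+1}_{M_s}\le\nu_s$ would produce a good $\Sigma_1^{(n-1)}(M_s)$ map singularising $\nu_s$, contradicting $n=n_s$.

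For the converse I argue the contrapositive: assuming $\lambda(f)=\nu_s$, I deduce $\rho(f)=\rho_s$. Apply the Interpolation Lemma \ref{interpolation} to $f^{\ast}$, obtaining $\tilde f:M_{\bar s}\to\tilde M$ which is $\Sigma_0^{(n)}$-cofinal and $f':\tilde M\to_{\Sigma_0^{(n)}}M_s$ with $f'\upharpoonright\nu_s=\mathrm{id}$, where $\tilde\nu=\sup f$``$\bar\nu=\nu_s$. Via Lemma \ref{4.3}, the structure $\langle\nu_s,\tilde M\rangle$ belongs to $S^+$ and carries the same invariants $n,p_s,d_s,\alpha_s,\kappa_s$ as $s$; combined with soundness of $M_s$ above $\nu_s$ and the uniqueness lemma following the definition of $S^+$, this forces $\tilde M=M_s$ and $f'=\mathrm{id}$. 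Hence $f^{\ast}=\tilde f$ is $\Sigma_0^{(n)}$-cofinal into $\omega\rho^n_{M_s}=\rho_s$; pulling back a cofinal sequence in $\rho_s$ via the soundness-representation of ordinals in $\rho_{\bar s}$ places its preimage in $\rho_{\bar s}$, so $\rho(f)=\rho_s$. I expect the main obstacle to be this rigidity step — verifying that the interpolation is trivial in the case $\lambda(f)=\nu_s$ — which is precisely where the hypothesis $k_s=n_s$ (forcing $\rho_s>\nu_s$ strictly, pinning down the correct projectum level) is essential to rule out a spurious interpolating structure fixing $\nu_s$ but embedding nontrivially into $M_s$ between $\nu_s$ and $\rho_s$.
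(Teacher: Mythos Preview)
Your $(\leftarrow)$ direction is essentially correct in spirit, though you should invoke Lemma~\ref{initseg} rather than Lemma~\ref{4.3} directly: the interpolated map $f'$ is only $\Sigma_0^{(n)}$, not $\Sigma_1^{(n)}$, so Lemma~\ref{4.3} does not apply to it as stated. Lemma~\ref{initseg} already packages the verification that $\langle\lambda,\widetilde M\rangle\in S^+$ with the correct invariants, and then the uniqueness lemma plus Lemma~\ref{determined} give $\widetilde M=M_s$ and $f^\ast=\tilde f$, hence cofinality into $\rho_s$. The paper instead argues directly: if $\rho(f)<\rho_s$ while $\lambda(f)=\nu_s$, the pushed-forward $\Sigma_1^{(n)}$ singularising function $F$ has all its level-$n$ witnesses bounded below $\rho_s$, so $F\in M_s$, contradicting regularity of $\nu_s$ in $M_s$. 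Your route is longer but valid once the right lemma is cited.

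Your $(\rightarrow)$ direction, however, has a genuine gap. First, soundness of $M_{\bar s}$ above $\bar\nu$ gives representability via $\widetilde h^{n+1}_{M_{\bar s}}$ (which is $\Sigma_1^{(n)}$), not via $\widetilde h^{n}_{M_{\bar s}}$ (which is only $\Sigma_1^{(n-1)}$); there is no reason every $\bar\xi<\rho_{\bar s}$ lies in the $\Sigma_1^{(n-1)}$-hull of $\bar\nu\cup\{p_{\bar s}\}$. Second, and more seriously, even with the correct $\widetilde h^{n+1}$, your boundedness argument fails: you claim that if $\widetilde h^{n+1}_{M_s}(\eta\cup\{p_s\})\cap\rho_s$ were cofinal in $\rho_s$ one could compose with a $\Sigma_1^{(n)}$ function to produce a $\Sigma_1^{(n-1)}$ singularisation of $\nu_s$ --- but composing a $\Sigma_1^{(n)}$ hull function with a $\Sigma_1^{(n)}$ map does not yield anything $\Sigma_1^{(n-1)}$, so no contradiction with the minimality of $n_s$ arises. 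You never use $\alpha_s\in\operatorname{ran}(f^\ast)$, and that is exactly the missing ingredient. The paper proceeds by contrapositive: assuming $\rho(f)=\rho_s$, the interpolated map $f':M_{s'}\to M_s$ (with $s'=s|\lambda$) becomes $\Sigma_0^{(n)}$-cofinal, hence $\Sigma_1^{(n)}$-preserving; since $f'\upharpoonright\lambda=\operatorname{id}$ and $f'(p_{s'})=p_s$, the image $f'$``$M_{s'}$ lands inside $\widetilde h_s(\lambda\cup\{p_s\})$ and meets $\nu_s$ exactly in $\lambda$, forcing $\nu_s\cap\widetilde h_s(\lambda\cup\{p_s\})\subseteq\lambda$. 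This contradicts $\alpha_s<\lambda$ by the definition of $\alpha_s$. Without invoking $\alpha_s$ there is no apparent way to bound the hull in $\rho_s$.
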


\nod{\bf Proof: }$(\rightarrow)$ Suppose $\rho (f) = \rho_s$. \ Let
$\lambda = \lambda (f) .$ Then, in the notation of the previous
Lemma the map $f'$ is not only $\Sigma_0^{(n)}$ but is cofinal at
the $n$'th level, and thus $\Sigma_1^{(n)}$-preserving. We also have
that $f' (\langle\lambda, p_{s'}\rangle) = \langle\nu, p_s\rangle$.
This implies that $\nu \cap f'$``$ h_{s'} (\lambda \cup p_{s'})
\subseteq \nu \cap h_s (\lambda \cup p_s) = \lambda .$ Were $\lambda
< \nu$ this would contradict the fact that $\lambda > \alpha_s$ as
the latter is by supposition, in $\tmop{ran} (f)$.

$(\leftarrow)$ Suppose $\lambda =_{\tmop{df}} \lambda (f) = \nu$.
Again in the same notation, suppose $\rho' =_{\tmop{df}} \rho (f) <
\rho_s,$ It is then easy to see that a good $\Sigma^{(n)}_1$
function, $\overline{F}$ say, singularizing $\bar\nu$ definable in
some parameter ${\overline{q}}$ is taken by the
$\Sigma^{(n)}_0$-preserving $f^\ast$ to a good $\Sigma^{(n)}_1(M_s)
$ function $F$ in $q=f({\overline{q}})$  singularizing $\lambda$,
with all the parameters of the form $x^n$ needed to define the
values $F(\xi)$ in $ran(f^\ast)$. However if $\rho' =
%\rho^{n_{\lambda}}_{\tilde{M}} = \rho_{s'}
< \rho_s$  we should have that $F\in M_s$. However $\lambda = \nu$!
Contradiction!
\hfill Q.E.D. \\

The construction of the $C_s$-sequences attached to $s = (\nu_s, M_s)$ will
follow in essence the construction in {\cite{Wehbk}}. The main point is that
we can give an estimate to the length of the $C_s$ sequence.

We may state immediately what the $C_s$-sequences for $s = (\nu_s,
M_s) \in S^+ $ will be:

\begin{definition}
  Let $s \in S^+$; $C_s^+ =_{\tmop{df}} \{\lambda (f) \mid s\}; C_s
  =_{\tmop{df}} C_s^+ \backslash \{\nu_s \}$.
\end{definition}

\begin{definition}
  Let $f : \bar{s} \Longrightarrow s.$ Then $ \beta (f) =_{\tmop{df}}
  \max \{\beta \leq \alpha_{\nu_s} \mid f \upharpoonright \beta = \tmop{id}
  \upharpoonright \beta\}$.
\end{definition}

By elementary closure considerations show that $\beta (f)$ is
defined, and  that $\beta (f) = \alpha_{\nu_s}$ iff $f =
\tmop{id}_{\nu_s}$ iff $f (\beta) \ngtr \beta$. if $\beta (f)$ were
singular in $M_{\bar{\nu}}$ using some cofinal function
$g:\beta'\longrightarrow\beta$ with $\beta'<\beta$, we should have
that then $\beta (f)
> \sup(ran(g))= \beta$. Hence $M_{\bar{\nu}} \models$``$\beta (f)$
is a regular cardinal''.

The next lemma lists some properties of $f_{(\gamma, q, s)}$ which
were defined at \ref{basicf}. Firstly a {\tmem{minimality property}}
of $f_{(\gamma, q, s)}$.

\begin{lemma}
  \label{minimality}(i) If $\gamma \leq \nu_s$ then $f_{(\gamma, q, s)}$ is
  the least $f$ such that $f \upharpoonright \gamma = \tmop{id}
  \upharpoonright \gamma$ with $q, p (s) \in \tmop{ran} (f^{\ast})$, in that
  if g is any other such with these two properties, (meaning that $g
  \Longrightarrow s$ with extension $g^{\ast}$ so that $\gamma \cup \{q, p
  (s)\} \subseteq \tmop{ran} (g^{\ast})$) \ then $g^{- 1} f_{(\gamma, q,
  \nu_s)} \in \mathbb{F}.$

  (ii) $f_{(\gamma, q, s)} = f_{(\beta, q, s)}$ where $\beta = \beta
  (f_{(\gamma, q, s)})$.

  (iii) $f_{(\nu, 0, s)} = \tmop{id}_s $;

  (iv) Let $f : \bar{s} \Longrightarrow s$ with $\bar{\gamma} \leq
  \nu_{\bar{s}}, f$``$\bar{\gamma} \subseteq \gamma \leq \alpha_{\nu}, \bar{q}
  \in J_{\overline{s}}, f^{\ast} ( \bar{q}) = q$, then

  $\tmop{ran} (f^{\ast} f^{\ast}_{( \bar{\gamma}, \bar{q}, \bar{s})})
  \subseteq \tmop{ran} (f^{\ast}_{(\gamma, q, s)}) .$

  With (i) this implies: if $\beta (f) \geq \gamma$ then $f f_{(
  \bar{\gamma}, \bar{q}, \bar{s})} = \text{$f_{(\gamma, q, s)}$}$.

  (v) Set $g = \text{$f_{(\gamma, q, s)}$}$; $\lambda = \lambda (g)$ and $g_0
  = \tmop{red} (g)$. Then $q \in J_{s| \lambda}$ and $g_0 = \text{$f_{(\gamma,
  q, s| \lambda)}$}$.
\end{lemma}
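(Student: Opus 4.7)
\medskip
\noindent\textbf{Proof plan for Lemma \ref{minimality}.} The unifying observation is that, by Definition \ref{basicf}(iv) (reading $q$ into the hull), $f_{(\gamma,q,s)}$ is the inverse of the transitive collapse of the Skolem hull
$$ H_{\gamma,q,s} \;=_{\mathrm{df}}\; \widetilde{h}_s\!\bigl(\gamma\cup\{q\}\cup p(s)\bigr) $$
inside $M_s$. With this in hand, Lemma \ref{4.3} applies to the inverse-collapse map and certifies that $H_{\gamma,q,s}$ collapses to some $M_{\bar s}$ with $\bar s\in S^+$, verifying the parenthetical remark after Definition \ref{basicf}. Each clause now reduces to a routine Skolem-hull computation.

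For (i), suppose $g\colon\bar t\Rightarrow s$ satisfies $g\!\upharpoonright\!\gamma=\mathrm{id}$ and $q,p(s)\in\mathrm{ran}(g^\ast)$. Then $\mathrm{ran}(g^\ast)$ contains the generators $\gamma\cup\{q\}\cup p(s)$ and, as the image of a $\Sigma_1^{(n_s)}$-preserving map, is closed under $\widetilde{h}_s$; hence $H_{\gamma,q,s}\subseteq\mathrm{ran}(g^\ast)$. Composing the inverse collapse of $H_{\gamma,q,s}$ inside $M_{\bar t}$ with $g$ factorises $f_{(\gamma,q,s)}$, and Lemma \ref{4.3} places the intermediate map in $\mathbb F$, i.e.\ $g^{-1}f_{(\gamma,q,s)}\in\mathbb F$. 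For (iii), with $\gamma=\nu_s$ and $q=\emptyset$ the hull $\widetilde{h}_s(\nu_s\cup p(s))$ equals $M_s$ by soundness of $M_s$ above $\nu_s$ (or above $\kappa_s$, which when defined sits in $p(s)$), so the collapse is the identity.

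For (ii), set $\beta=\beta(f_{(\gamma,q,s)})$. The inclusion $H_{\beta,q,s}\subseteq H_{\gamma,q,s}$ is automatic; conversely, any element of $H_{\gamma,q,s}\cap[\beta,\gamma)$ would be fixed by the collapse and thus extend the identity past $\beta$, contradicting the maximality of $\beta$, so $H_{\gamma,q,s}\cap\gamma\subseteq\beta$ and the hulls agree. For (iv), $\Sigma_1^{(n)}$-preservation of $f^\ast$ gives, term by term,
$$f^\ast{\,``\,}H_{\bar\gamma,\bar q,\bar s}\;\subseteq\;\widetilde{h}_s\!\bigl(f{\,``\,}\bar\gamma\cup\{q\}\cup p(s)\bigr)\;\subseteq\;H_{\gamma,q,s},$$
which is the first assertion. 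When moreover $\beta(f)\geq\gamma$, the composite $ff_{(\bar\gamma,\bar q,\bar s)}$ satisfies the hypotheses of (i); it therefore factors through $f_{(\gamma,q,s)}$, the range inclusion just established becomes an equality, and the two maps coincide after transitive collapse.

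For (v), apply Lemma \ref{initseg} to $g=f_{(\gamma,q,s)}$ to obtain $\lambda=\lambda(g)$, the initial segment $s'=s|\lambda$ and the unique factorisation $g^\ast=f'\circ g_0^\ast$ with $g_0\colon\bar s\Rightarrow s'$ and $f'\!\upharpoonright\!\lambda=\mathrm{id}$. Writing $q=g^\ast(\bar q)$ with $\bar q\in J_{\bar s}$, then $g_0^\ast(\bar q)\in J_{s'}$ and $f'$ fixes it, so $q\in J_{s'}$. The map $g_0$ still fixes $\gamma$ (since $g$ does and $f'\!\upharpoonright\!\lambda$ is the identity) and has $q,p(s')\in\mathrm{ran}(g_0^\ast)$, the identification $p(s')=p(s|\lambda)$ coming directly from steps (2)--(4) in the proof of Lemma \ref{initseg} (which show that the distinguished parameters $p_{s'},d_{s'},\alpha_{s'}$ are precisely the images of $p_{\bar s},d_{\bar s},\alpha_{\bar s}$ under $g_0^\ast$). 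Two applications of (i), once to $g_0$ and once to $f_{(\gamma,q,s')}$, show that each factors through the other, yielding $g_0=f_{(\gamma,q,s|\lambda)}$.

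The main obstacle is clause (v): one must be sure that when $g$ is cut at $\lambda$ all the distinguished parameters forming $p(s)$ descend correctly to those of $p(s|\lambda)$, so that the characterising property of (i) actually identifies $g_0$. That bookkeeping is already buried in the proof of Lemma \ref{initseg}; modulo invoking it, the remaining clauses are purely formal consequences of the universal property of the Skolem hull $H_{\gamma,q,s}$.
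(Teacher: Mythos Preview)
Your handling of (i), (iii), (iv) matches the paper's ``easy consequences of the definitions''. One slip in (ii): since $f_{(\gamma,q,s)}\upharpoonright\gamma=\mathrm{id}$ you have $\beta=\beta(f_{(\gamma,q,s)})\geq\gamma$, so the interval $[\beta,\gamma)$ you invoke is empty and the argument as written is vacuous. What you want is simply: $\beta\subseteq\mathrm{ran}(f^\ast)=H_{\gamma,q,s}$ (because $f\upharpoonright\beta=\mathrm{id}$), hence $H_{\beta,q,s}\subseteq H_{\gamma,q,s}$; and $\gamma\leq\beta$ gives the reverse inclusion.

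The substantive gap is in (v). Your phrase ``two applications of (i)'' delivers only one inclusion. Applying (i) at $s'=s|\lambda$ with $g_0$ as the competitor $g$ yields $g_0^{-1}f_{(\gamma,q,s')}\in\mathbb{F}$, i.e.\ $\mathrm{ran}(f_{(\gamma,q,s')}^\ast)\subseteq\mathrm{ran}(g_0^\ast)$. But (i) is a minimality statement about the map $f_{(\gamma,q,s')}$; it gives you nothing when you try to use $f_{(\gamma,q,s')}$ itself as the competitor, since there is no a~priori reason $g_0$ should be of the canonical form $f_{(\gamma',q',s')}$. So the reverse inclusion $\mathrm{ran}(g_0^\ast)\subseteq H_{\gamma,q,s'}$ is not yet established.

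The paper closes this by a lift-back trick rather than a second use of (i). Set $g_0'=f_{(\gamma,q,s|\lambda)}$ and $k=g_0^{-1}g_0'\in\mathbb{F}$ (the one legitimate application of (i)). The crucial point is that $d(g_0)=d(g)=\bar s$, so the composite $gk$ lands in $\mathbb{F}$ with $gk\Longrightarrow s$. One checks $gk\upharpoonright\gamma=\mathrm{id}$ and $q,p(s)\in\mathrm{ran}((gk)^\ast)$, so (i) applied at $s$ with $gk$ as competitor gives $(gk)^{-1}g=k^{-1}\in\mathbb{F}$. Having both $k,k^{-1}\in\mathbb{F}$ forces $k=\mathrm{id}$, whence $g_0=g_0'$. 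The manoeuvre is to return to level $s$, where the minimality of $g=f_{(\gamma,q,s)}$ is genuinely available.

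Your route is repairable without this trick: since $M_{\bar s}$ is the transitive collapse of $H_{\gamma,q,s}$, it equals $\widetilde{h}_{\bar s}(\gamma\cup\{\bar q,p(\bar s)\})$; applying the $\Sigma_1^{(n)}$-preserving $g_0^\ast$ and the parameter identifications from Lemma~\ref{initseg} gives $\mathrm{ran}(g_0^\ast)\subseteq\widetilde{h}_{s'}(\gamma\cup\{q,p(s')\})=H_{\gamma,q,s'}$ directly. That hull inclusion is what should replace your second ``application of (i)''.
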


\nod{\bf{Proof: {\noindent}}}{\tmem{(i)}} -{\tmem{(iv)}} are easy
consequences of the definitions. (For {\tmem{(i)}} note this makes
sense since we have specified in effect that $\tmop{ran} (g^{\ast})
\supseteq \tmop{ran} ( f_{(\gamma, q, s)})$.) We establish{\tmem{
(v)}}. We know that $g_0 \Longrightarrow s| \lambda$. \ \ Set $g_0'
= f_{(\gamma, q, s| \lambda)}$ and we shall argue that $g_0 = g_0'$.
Let $k = g_0^{- 1} g_0'$. The argument of Lemma \ref{initseg} shows
that $d (g_0) = d (g)$; as $g_0 \upharpoonright \gamma = \tmop{id}
\upharpoonright \gamma$, and $q \in \tmop{ran} (g_0)$ by
{\tmem{(i)}} the minimality of $g_0' \Longrightarrow s| \lambda$
implies we have such a $k$ defined. Thus $k \in \mathbb{F}$. But $k
\Longrightarrow d (g_0)$ so we conclude, as $d (g_0) = d (g)$, \
that $g k \in \mathbb{F}$. But $\tmop{ran} ((g k)^{\ast}) \cap
\lambda = \tmop{ran} (g^{\ast}) \cap \lambda$. So, using that $g k
\upharpoonright \gamma = \tmop{id} \upharpoonright \gamma$, and $q,
p (s) \in \tmop{ran} (g k)$, and then {\tmem{(i)}} again, we have
$(g k)^{- 1} g = k^{- 1} \in \mathbb{F}$. Hence $k = \tmop{id}_{d
(g'_0)}$ and thus $g_0 = g_0'$.
\mbox{}\hfill Q.E.D. \\

Our definitions are preserved through $\Longrightarrow$ when a map
$f$ is {\tmem{cofinal}}, meaning that $|f|$ is cofinal into $r (f)$:

\begin{lemma}
  \label{lambdapreserve}Let $f : \bar{s} \Longrightarrow s $ with $\lambda (f)
  = \nu$. Set $\bar{\nu} = \nu_{\bar{s}}$, $\nu = \nu_s$, and let
  $\bar{\gamma} < \bar{\nu}, \gamma = f ( \bar{\gamma}), \overline{q} \in
  J_{\bar{s}}, f ( \bar{q}) = q$. Set

  $\bar{g} = f_{( \bar{\gamma}, \bar{q}, \bar{s})}$; $g = f_{(\gamma, q, s)}$.
  \ Then

  (i) $\lambda ( \bar{g}) < \bar{\nu} \longleftrightarrow \lambda (g) < \nu$ ;

  (ii) If \ $\lambda ( \bar{g}) < \bar{\nu} $ then $f (\lambda ( \bar{g})) =
  \lambda (g)$ and $f (\beta ( \bar{g})) = \beta (g)$.
\end{lemma}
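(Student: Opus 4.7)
The plan is to derive both claims from a single factorisation $f\circ\bar g = g\circ h$ with $h\in\mathbb{F}$, combined with the hypothesis $\lambda(f) = \nu$. The canonical extension $\bar g^{*}$ has range $X := \widetilde{h}_{\bar s}(\bar\gamma \cup \{\bar q, p(\bar s)\})$, and $f^{*}$ is $\Sigma_1^{(n_s)}$-preserving with $f^{*}(\bar q) = q$ and $f^{*}(p(\bar s)) = p(s)$ by the very definition of $f : \bar s \Longrightarrow s$. Hence
$$ f^{*}(X) \;=\; \widetilde{h}_s(f\text{``}\bar\gamma \cup \{q, p(s)\}) \;\subseteq\; \widetilde{h}_s(\gamma \cup \{q, p(s)\}) \;=\; \mathrm{ran}(g^{*}). $$
Collapsing through $g^{*}$ yields $h^{*} := (g^{*})^{-1} \circ f^{*} \circ \bar g^{*}$; Lemma \ref{determined} gives uniqueness, and Lemma \ref{4.3} confirms $h \in \mathbb{F}$ with $d(h) = d(\bar g)$ and $r(h) = d(g)$.

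For the backward direction of (i), suppose $\lambda(g) < \nu$. For any ordinal $\zeta \in X \cap \bar\nu$, $f(\zeta) \in f^{*}(X) \cap \nu \subseteq g$``$\nu_{d(g)}$, so $f(\zeta) \leq \lambda(g) < \nu$. Were $\lambda(\bar g) = \bar\nu$, then $X \cap \bar\nu$ would be cofinal in $\bar\nu$ and $f$``$(X \cap \bar\nu)$ cofinal in $\sup f$``$\bar\nu = \lambda(f) = \nu$, contradicting the bound.

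For the forward direction of (i) and clause (ii), set $\bar\lambda := \lambda(\bar g) < \bar\nu$. Since $\bar g$ is cofinal into $\bar\lambda$ (by Lemma \ref{initseg}), $\lambda(f\bar g) = \sup f$``$\bar\lambda \leq f(\bar\lambda) < \nu$. Applying Lemma \ref{initseg} to $f\bar g$ produces the cofinal reduction $(f\bar g)_0 : d(\bar g) \Longrightarrow s | \lambda(f\bar g)$ whose canonical extension carries $p(\bar s), \bar q, \bar\gamma$ to $p(s | \lambda(f\bar g)), q, \gamma$ respectively (via Lemma \ref{initseg}(2) and Lemma \ref{minimality}(v)). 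The minimality of $g = f_{(\gamma, q, s)}$ supplied by Lemma \ref{minimality}(i) then forces $g$ to factor through $s | \lambda(f\bar g)$, yielding $\lambda(g) \leq \lambda(f\bar g) < \nu$. The reverse inequality $\lambda(g) \geq \sup f$``$\bar\lambda$ follows from the inclusion $(f\bar g)$``$\nu_{d(\bar g)} \subseteq g$``$\nu_{d(g)}$, so $\lambda(g) = \sup f$``$\bar\lambda$; the further equality $f(\bar\lambda) = \lambda(g)$ reduces to $f$-continuity at $\bar\lambda$, which holds because any cofinal sequence in $\bar\lambda$ of length below $\bar\nu$ is mapped by $f$ to a cofinal sequence in $f(\bar\lambda)$ via $\Sigma_1^{(n)}$-preservation. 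Finally, $f(\beta(\bar g)) = \beta(g)$ follows from the characterisation of $\beta(\cdot)$ as the maximal interval of identity, together with $\Sigma_1$-elementarity of $f$ and the equation $f(\bar\lambda) = \lambda(g)$.

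The subtlest step will be verifying that the reduction $(f\bar g)_0$ coincides with the canonical map $f_{(\gamma, q, s | \lambda(f\bar g))}$ in our family, so that minimality of $g$ can actually be invoked. This requires the joint use of Lemma \ref{initseg}(2)--(4) (preservation of $p$, $d$, $\alpha$ under reductions) and Lemma \ref{minimality}(v) (identification of reductions with canonical $f_{(\cdot,\cdot,\cdot)}$-maps), after which the minimality argument closes cleanly.
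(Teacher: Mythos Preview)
Your backward direction of (i) is correct and in fact slightly cleaner than the paper's (which routes through Lemma~\ref{suprema} and a $\Pi_1^{(n)}$ reflection). The gap is in the forward direction and in (ii).

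The ``subtlest step'' you flag --- identifying $(f\bar g)_0$ with $f_{(\gamma,q,s|\lambda(f\bar g))}$ --- is not merely subtle but false in general. The range of $(f\bar g)^{\ast}$ contains only $f\text{``}\bar\gamma$, not all of $\gamma$; so $(f\bar g)_0$ need not be the canonical map indexed by $\gamma$, and the minimality of $g$ in Lemma~\ref{minimality}(i) cannot be invoked (that lemma requires $\gamma \subseteq \operatorname{ran}$ of the competitor). Lemma~\ref{minimality}(iv) only gives the inclusion $\operatorname{ran}((f\bar g)^{\ast}) \subseteq \operatorname{ran}(g^{\ast})$, hence $\lambda(f\bar g) \leq \lambda(g)$, which is the wrong inequality for your purposes. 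Your continuity claim $\sup f\text{``}\bar\lambda = f(\bar\lambda)$ is likewise unproven: knowing that $f(c)$ is cofinal in $f(\bar\lambda)$ for some $c \in J_{\bar s}$ does not place $\operatorname{ran}(f(c))$ inside $f\text{``}\bar\lambda$.

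The paper bypasses both obstacles. For $\lambda(g) \leq f(\bar\lambda)$ it observes that ``the hull is bounded by $\lambda(\bar g)$'' is a $\Pi_1^{(n)}$ statement in the parameters $\bar\gamma,\bar q,p(\bar s),\lambda(\bar g)$, which $f^{\ast}$ pushes up directly --- this simultaneously gives the forward half of (i). For $f(\bar\lambda) \leq \lambda(g)$ it uses Lemma~\ref{suprema} to get $\rho(\bar g) < \rho_{\bar s}$, so the truncated reduct $\bar N = \langle J^A_{\rho(\bar g)}, A\cap J_{\rho(\bar g)}\rangle$ is an \emph{element} of $M_{\bar s}$; then $\bar\lambda$ is first-order definable from $\bar N$ and the parameters, $f^{\ast}$ maps this whole configuration to $N \in M_s$, and $h_N \subseteq h_s$ finishes. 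The proof of $f(\beta(\bar g)) = \beta(g)$ is a genuine argument of the same flavour (the paper's Claim~2), not a one-line consequence of elementarity as you suggest.
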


\nod{\bf{Proof: {\noindent}}} Assume $\text{$\lambda ( \bar{g}) <
\bar{\nu}$.}$ Set $\overline{h} = \tilde{h}_{\bar{s}}$, $\lambda' =
f (\lambda ( \bar{g}))$. The following is $\text{$\Pi^{(n)
M_{\bar{s}}}_1 (\{\lambda ( \bar{g}), \bar{\gamma}, p (
\bar{s})\})$}$:

$\forall x^n \forall \xi^n < \bar{\gamma} \forall i < \omega (x^n =
\overline{h} (i, \langle \xi^n, \overline{q}, p ( \bar{s}) \rangle)
\wedge x^n < \bar{\nu} \longrightarrow x^n < \lambda ( \bar{g}))$;
if $\bar{\nu} = \tmop{On} \cap M_{\bar{s}}$ then we drop the
conjunct $x^n < \bar{\nu}$. Then

$\forall x^n \forall \xi^n < \gamma \forall i < \omega (x^n = \tilde{h}_s (i,
\langle \xi^n, q, p (s) \rangle) \wedge x^n < \nu \longrightarrow x^n <
\lambda')$

{\noindent}as $f$ is $\Pi^{(n)}_1$-preserving. Hence $\lambda' \geq \lambda
(g)$.

{\tmem{Claim 1:}} $\lambda' \leq \lambda (g)$.

As $\lambda ( \bar{g}) < \bar{\nu} $ we have $\omega \rho ( \bar{g})
< \omega \rho_{\bar{s}}$ by Lemma \ref{suprema}. Hence if we set $A
= A^{n, p_{\bar{s}} \upharpoonright n}$, and $\bar{N} = \langle
J^A_{\rho ( \bar{g})}, A \cap J_{\rho ( \bar{g})} \rangle$ we have
that $\overline{N} \in M_{\bar{s}}$ and is an amenable structure,
with \ \ $\lambda ( \bar{g}) = \sup ( \bar{\nu} \cap
h_{\overline{N}} ( \bar{\gamma} \cup \{ \bar{q}, p ( \bar{s}) \cap
\omega \rho_{\bar{s}} \})$.

{\noindent}Applying $f^{\ast}$, and with $N = f ( \overline{N})$, we have
 $\lambda' = \sup (\nu \cap h_N (\gamma
\cup \{q, p (s) \cap \omega \rho_{\nu} \})$.

For amenable structures (such as $N)$ we have a uniform definition
of the canonical $\Sigma_1 (N)$ Skolem function $h_N$. From $\langle
N, A_N \rangle \subseteq \langle M^n_s, A^n_s \rangle$, we have that
$h_N \subseteq h_s$, and thus
$$\lambda' = \sup (\nu \cap h_s
(\gamma \cup \{q, p (s) \cap \omega \rho_s \})) = \sup (\nu \cap
\tilde{h}_s (\gamma \cup \{q, p (s)\})).$$

Thus $\lambda' \leq \lambda (g)$ and  {\tmem{Claim 1}} is finished.

{\tmem{Claim 2}} $f (\beta ( \bar{g})) = \beta (g)$

Let $\beta = f (\beta ( \bar{g}))$; as $\bar{g} = f_{(\beta (
\bar{g}), \bar{q}, \bar{s})}$ we have $\beta ( \bar{g}) \notin
\tmop{ran} ( \overline{g}$). $\beta = f (\beta ( \bar{g})) = f (\sup
\{ \bar{\delta} < \bar{\nu} \mid \bar{\delta} \subseteq \tmop{ran} (
\overline{g}$)\})= $f (\sup \{ \bar{\delta} < \bar{\nu} \mid
\bar{\delta} \subseteq h_{\overline{N}} ( \bar{\delta} \cup \{
\bar{q}, p ( \bar{s}) \cap \omega \rho_{} \})\})$= $\sup \{\delta <
\nu \mid \delta \subseteq h_{\bar{N}} (\delta \cup \{q, p (s) \cap
\omega \rho_s \})$\}. By the above $\beta\leq\sup \{\delta < \nu
\mid \delta \subseteq h_{\nu} (\delta \cup \{q, p (s) \cap \omega
\rho_s \}) = \beta (g)$. \ Suppose however $\beta < \beta (g) .$
Then in $M_s$ we have:

$\forall \beta^n \leq \beta \exists \xi^n < \gamma \exists i < \omega (\beta^n
= \tilde{h}_s (i, \langle \xi, q, p (s) \rangle)$.

{\noindent}However $f$ is $\Sigma_1^{(n)}$-preserving, so this goes down to
$M_{\bar{s}}$ as:

$\forall \bar{\beta}^n \leq \beta ( \bar{g}) \exists \bar{\xi}^n <
\bar{\gamma} \exists i < \omega ( \bar{\beta}^n = \tilde{h}_{\bar{s}} (i,
\langle \bar{\xi}^n, \bar{q}, p ( \bar{s}) \rangle)$.

{\noindent}But this, with $\bar{\beta}^n \leq \beta ( \bar{g})$
implies $\beta ( \bar{g}) \in \tmop{ran} ( \bar{g})$ which is a
contradiction! This finishes {\tmem{Claim 2}} and{\tmem{ (ii)}}.
Finally, just note for \ $(\leftarrow)$ of {\tmem{(i)}} \ as $\rho
(f) = \rho_s$, if $\lambda (g) < \nu$ then by Lemma \ref{suprema}
there is $\eta = f ( \bar{\eta}) < \rho (f)$ with $\tilde{h}_s
(\gamma \cup \{q, p (s)\}) \cap \omega \rho_s \subseteq \eta$. This
$\Pi^{(n)}_1$ statement goes down to $M_{\bar{s}}$ as
$\tilde{h}_{\bar{s}} ( \bar{\gamma} \cup \{ \bar{q}, p ( \bar{s})\})
\cap \omega \rho_{\bar{s}} \subseteq \bar{\eta}$. Hence $\lambda (
\bar{g}) < \lambda$.
\mbox{}\hfill Q.E.D. \\

From this point onwards in the proof we are very much following,
almost verbatim, the development of {\cite{BJW}}: the fine
structural arguments specific to our level of mice have all been
dealt with, and the rest is very much combinatorial reasoning that
is common to whatever model we are trying to define a $\square$
sequence for.

\begin{definition}
\label{Bplus}  Let $s = \langle \nu_s, M_s \rangle \in S^+, q \in
J_{\nu_s} .$ \ $B (q, s)
  =_{\tmop{df}} B^+ (q, s) \backslash \{ \nu_s \}$ where
 $$B^+ (q, s) =_{\tmop{df}} \{\beta (f_{(\gamma, q, s)}) \mid \gamma \leq
  \nu_s \}.$$
\end{definition}

 $B (q, s)$ is thus the set of those $\beta < \nu_s$ so that $\beta =
\beta (f)$ where $f = f_{(\beta, q, s})$.
%Recall that $B^{\ast} $is always the class
%of limit points of $B$ for any set $B \subseteq \tmop{On}$.

\begin{lemma}\label{3.18}
  Let $f$ abbreviate $f_{(\gamma, q, s)}$. Assume $q \in J_s$. (i) Suppose
  $\gamma \in B (q, s)^{\ast} .$ Then
$\tmop{ran} (f) = \bigcup_{\beta \in B (q, s) \cap
  \gamma} \tmop{ran} (f_{(\beta, q, s)}) .$

  {\noindent}(ii) Let $\gamma \leq \alpha_s .$ Suppose $\bar{s}$ is such that
  $f : \bar{s} \Longrightarrow s$ with $f ( \bar{q}) = q$. Then $\gamma \cap B
  (q, s) = B ( \bar{q}, \bar{s}) .$

  {\noindent}(iii) Let $\lambda = \lambda (f)$; $f_0 = \tmop{red} (f)$. Then
  $\gamma \cap B (q, s| \lambda) = \gamma \cap B (q, s)$.
\end{lemma}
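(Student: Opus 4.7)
For part (i), I would use the very definition of $f_{(\gamma,q,s)}$: its range is the $\Sigma_1^{(n_s)}$-Skolem hull $\widetilde{h}_s(\gamma \cup \{p(s),q\})$, which is monotone in its first coordinate. The assumption $\gamma \in B(q,s)^\ast$ means $B(q,s) \cap \gamma$ is unbounded in $\gamma$, so for any $x \in \mathrm{ran}(f_{(\gamma,q,s)})$, written as $x = \widetilde{h}_s(i,\langle \vec{\xi}, q, p(s)\rangle)$ with $\vec{\xi} \in [\gamma]^{<\omega}$, one can find $\beta \in B(q,s) \cap \gamma$ with $\max(\vec{\xi}) < \beta$; then $x \in \widetilde{h}_s(\beta \cup \{p(s),q\}) = \mathrm{ran}(f_{(\beta,q,s)})$. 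The reverse inclusion is immediate from monotonicity of the hull together with Lemma \ref{minimality}(i), which exhibits each $f_{(\beta,q,s)}$ as factoring through $f$.

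For part (ii), the engine is Lemma \ref{minimality}(iv) on composition with the canonical maps. For the inclusion $B(\bar q, \bar s) \subseteq \gamma \cap B(q,s)$, take $\bar\beta \in B(\bar q, \bar s)$, so $\bar\beta = \beta(f_{(\bar\beta, \bar q, \bar s)})$. Since $\bar\beta \leq \alpha_{\bar s}$ and $f(\bar\alpha_{\bar s}) = \alpha_s$, so $f``\bar\beta \subseteq \alpha_s$, we are in the setting of Lemma \ref{minimality}(iv), which yields that $f \circ f_{(\bar\beta,\bar q,\bar s)}$ is one of the canonical maps into $s$ with identity-part matching $\bar\beta$ under $f$; combined with Lemma \ref{4.3} one identifies this composition as witnessing membership in $B(q,s)$. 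For the reverse inclusion, take $\beta \in \gamma \cap B(q,s)$; then $\beta = \beta(f_{(\beta,q,s)})$, and the minimality part of Lemma \ref{minimality}(i), applied with $f$ in place of the generic $g$ (noting $q, p(s) \in \mathrm{ran}(f^\ast)$), forces $f_{(\beta,q,s)}$ to factor through $f$; the factor must be $f_{(\bar\beta,\bar q,\bar s)}$ for the correct $\bar\beta$, witnessing $\bar\beta \in B(\bar q,\bar s)$. The delicate point here—and I expect this to be the main technical obstacle—is verifying that the factored map has exactly $\bar\beta$ as its $\beta$-value, which requires that $f$ preserve the defining $\Pi_1^{(n)}$ condition on being outside the range, exactly the argument used in \emph{Claim 2} of Lemma \ref{lambdapreserve}.

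For part (iii), the key is Lemma \ref{minimality}(v), which tells us that $f_0 = f_{(\gamma,q,s|\lambda)}$ and in particular that $q \in J_{s|\lambda}$. Let $f' : s|\lambda \Rightarrow s$ be the canonical map of Lemma \ref{initseg}, which is the identity on $\lambda$; note $\gamma \leq \lambda$ since $\gamma \subseteq \mathrm{ran}(f) \cap \nu_s$. For any $\beta < \gamma$, the hull $\widetilde{h}_s(\beta \cup \{p(s),q\})$ is contained in $\widetilde{h}_s(\gamma \cup \{p(s),q\})$, so $\lambda(f_{(\beta,q,s)}) \leq \lambda$, and one can then apply Lemma \ref{minimality}(v) to $f_{(\beta,q,s)}$ itself, or more directly observe that $f' \circ f_{(\beta,q,s|\lambda)}$ has the minimality property characterising $f_{(\beta,q,s)}$ (again by Lemma \ref{minimality}(i)), giving $f_{(\beta,q,s)} = f' \circ f_{(\beta,q,s|\lambda)}$. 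Since $f' \upharpoonright \lambda = \mathrm{id}$ and $\beta < \lambda$, the two maps have the same $\beta$-value, so $\beta \in B(q,s)$ iff $\beta \in B(q,s|\lambda)$.
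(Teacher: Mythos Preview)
Your proposal is correct and follows essentially the paper's approach: the paper's proof is the one-liner ``(i) is clear; (ii) follows from Lemma~\ref{minimality}(iv); (iii) from (ii) and Lemma~\ref{minimality}(v)'', and your argument unpacks exactly these ingredients. The only minor deviation is in (iii): the paper applies (ii) twice---once to $f:\bar s\Rightarrow s$ and once to $f_0=f_{(\gamma,q,s|\lambda)}:\bar s\Rightarrow s|\lambda$ (same $\bar s$, by Lemma~\ref{minimality}(v))---to conclude both sides equal $B(\bar q,\bar s)$, whereas you argue directly via the map $f'$ from Lemma~\ref{initseg}; both routes are equivalent and equally short.
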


\nod{\bf{Proof: }}{\tmem{(i)}} is clear; {\tmem{(ii) }}follows from
Lemma \ref{minimality}{\tmem{(iv)}}, and {\tmem{(iii) }}from
{\tmem{(ii)}} and Lemma \ref{minimality}{\tmem{(v)}}. \ \ \
\mbox{}\hfill Q.E.D.

\begin{definition}
  Let $s \in S^+, q \in J_s .$

  \ \ \ \ \ \ \ \ $\Lambda^+ (q, s) =_{\tmop{df}} \{\lambda (f_{(\gamma, q,
  s)}) | \gamma \leq \nu_s \}; \Lambda (q, s) =_{\tmop{df}} \Lambda^+ (q, s)
  \backslash \{ \nu_s \}$.
\end{definition}

The sets  $ \Lambda (q, s) \subseteq C_s $ are first approximations
to $C_s$ if $q$ is allowed to vary. We first  analyse these sets.

\begin{lemma}
  \label{Bigl}Let $s \in S^+, q \in J_s .$ (i) $ \Lambda (q, s)$ is closed
  below $\nu_s$; (ii) $\tmop{ot} ( \Lambda (q, s)) \leq \nu_s$; (iii) if
  $\lambda \in \Lambda (q, s)$ then $q \in J_{s| \lambda}$ and $ \Lambda (q,
  s| \lambda) = \lambda \cap \Lambda (q, s)$.
\end{lemma}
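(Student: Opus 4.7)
The plan is to handle the three parts in turn; (i) and (iii) require genuine work while (ii) is immediate.

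For (i), the key point is monotonicity: if $\gamma \leq \gamma'$ then $\tilde{h}_s(\gamma \cup \{q, p(s)\}) \subseteq \tilde{h}_s(\gamma' \cup \{q, p(s)\})$, so $\lambda(f_{(\gamma,q,s)}) \leq \lambda(f_{(\gamma',q,s)})$. Given a limit $\mu^{\ast} < \nu_s$ of $\Lambda(q,s)$, approximated by $\mu_i = \lambda(f_{(\gamma_i,q,s)}) \uparrow \mu^{\ast}$, I would invoke Lemma \ref{minimality}(ii) to replace each $\gamma_i$ by $\beta(f_{(\gamma_i,q,s)}) \leq \mu_i$, and then set $\gamma^{\ast} = \sup_i \gamma_i \leq \mu^{\ast} < \nu_s$. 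Using $\tilde{h}_s(\gamma^{\ast} \cup \{q, p(s)\}) = \bigcup_i \tilde{h}_s(\gamma_i \cup \{q, p(s)\})$, a direct computation of suprema yields $\lambda(f_{(\gamma^{\ast},q,s)}) = \mu^{\ast}$, so $\mu^{\ast} \in \Lambda(q,s)$.

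Part (ii) is trivial since $\Lambda(q,s) \subseteq \nu_s$.

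For (iii), the assertion $q \in J_{s|\lambda}$ is already contained in Lemma \ref{minimality}(v). For the set equality, I would pick a map $f = f_{(\gamma_0,q,s)}$ witnessing $\lambda = \lambda(f)$ and apply the Interpolation Lemma \ref{interpolation} to $f^{\ast}$, factoring it as $f^{\ast} = f' \circ \widetilde{f}$, where $\widetilde{f}\colon M_{\bar s} \longrightarrow M_{s|\lambda}$ is $\Sigma_0^{(n)}$-cofinal and $f'\colon M_{s|\lambda} \longrightarrow M_s$ satisfies $f' \upharpoonright \lambda = \tmop{id}$. Parameter tracking, carried out as in the proof of Lemma \ref{initseg}(2)--(4) together with Lemma \ref{dpreserve}, then shows that $f'$ fixes $q$ (because $q \in J_\lambda$) and sends $p(s|\lambda)$ to $p(s)$. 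Hence for any $\gamma' < \lambda$, the $\Sigma_1^{(n)}$-preservation of $f'$ gives
\[ f'^{\ast}\bigl(\tilde{h}_{s|\lambda}(\gamma' \cup \{q, p(s|\lambda)\})\bigr) = \tilde{h}_s(\gamma' \cup \{q, p(s)\}), \]
so by uniqueness of the collapse map $f' \circ f_{(\gamma',q,s|\lambda)} = f_{(\gamma',q,s)}$. Since $g := f_{(\gamma',q,s|\lambda)}$ has image in $\lambda$ and $f'$ is identity on $\lambda$, we obtain $\lambda(f_{(\gamma',q,s)}) = \lambda(g)$; combined with Lemma \ref{minimality}(ii) (arranging $\gamma' \leq \mu < \lambda$ for an arbitrary $\mu \in \lambda \cap \Lambda(q,s)$), both inclusions in $\Lambda(q,s|\lambda) = \lambda \cap \Lambda(q,s)$ follow.

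The main obstacle I anticipate is the parameter tracking in (iii): verifying that $f'$ sends $p(s|\lambda)$ to $p(s)$ forces one to unpack the bundled parameter $p(s) = p_s \cup \{d_s, \alpha_s, \nu_s, \kappa_s\}$ (omitting $\kappa_s$ when undefined) and invoke preservation of each component, which rests on the full apparatus developed in Section 2 (including the Condensation Lemma II and $d$-parameter preservation via Lemma \ref{dpreserve}) and the arguments of Lemma \ref{initseg}.
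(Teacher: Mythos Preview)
For parts (i) and (ii) your argument matches the paper's essentially verbatim: pick witnesses $\beta_{\mu}\in B(q,s)$ for each $\mu$ below the limit point, take the supremum, and use the closedness of $B(q,s)$ (Lemma~\ref{3.18}(i)) to conclude.

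For part (iii), however, your route through the map $f'$ from the Interpolation Lemma has a genuine gap. The Interpolation Lemma only yields that $f'\colon M_{s|\lambda}\to M_s$ is $\Sigma_0^{(n)}$-preserving, not $\Sigma_1^{(n)}$; and in the present situation, where $\lambda=\lambda(f)<\nu_s$, Lemma~\ref{suprema} gives $\rho(f)<\rho_s$, so $f'$ is genuinely \emph{not} cofinal at level $n$ and cannot be upgraded to $\Sigma_1^{(n)}$. Since the graph of $\tilde h^{n+1}$ is $\Sigma_1^{(n)}$, your displayed hull equation
\[
f'{}^{\ast}\bigl(\tilde h_{s|\lambda}(\gamma'\cup\{q,p(s|\lambda)\})\bigr)=\tilde h_s(\gamma'\cup\{q,p(s)\})
\]
is not justified: the left-to-right inclusion needs $\Sigma_1^{(n)}$-preservation, and the right-to-left inclusion would in any case require $f'$ to be surjective onto $M_s$, which it is not. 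Consequently the identification $f'\circ f_{(\gamma',q,s|\lambda)}=f_{(\gamma',q,s)}$ does not follow, and neither does $f'\in\mathbb F$, so Lemma~\ref{minimality}(i) cannot be invoked for it.

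The paper avoids this entirely by never working with $f'$. It picks $g=f_{(\gamma_0,q,s)}$ with $\lambda(g)=\lambda$, sets $\beta=\beta(g)$ and $g_0=\operatorname{red}(g)\colon\bar s\Longrightarrow s|\lambda$; Lemma~\ref{minimality}(v) identifies $g_0=f_{(\beta,q,s|\lambda)}$. For any $\gamma\le\beta$ one then applies Lemma~\ref{minimality}(iv) twice---once to $g$ and once to $g_0$, both of which \emph{are} $\Sigma_1^{(n)}$-preserving members of $\mathbb F$---to get
\[
|f_{(\gamma,q,s|\lambda)}|=|g_0|\,|f_{(\gamma,\bar q,\bar s)}|=|g|\,|f_{(\gamma,\bar q,\bar s)}|=|f_{(\gamma,q,s)}|,
\]
using $|g_0|=|g|$. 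For $\gamma\ge\beta$ both $\lambda$-values are $\ge\lambda$ and contribute to neither side. Your parameter tracking ($f'(p(s|\lambda))=p(s)$ etc.) is correct and does follow from Lemma~\ref{initseg}, but the composition must be routed through $\bar s$ via $g,g_0$ rather than through $f'$.
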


\nod{\bf{Proof:}} Set $\Lambda = \Lambda (q, s)$. {\tmem{(i): }}Let
$\eta \in \Lambda^{\ast}$. We claim that $\eta \in \Lambda^+ (q,
s)$. For each $\lambda \in \Lambda (q, s) \cap \eta$ pick
$\beta_{\lambda} \in B (q, s)$ with $\lambda (f_{(\beta, q, s)}) =
\lambda$. \ Clearly $\lambda \leq \lambda' \longrightarrow
\beta_{\lambda'} \leq \beta_{\lambda}$. Let $\gamma$ be the supremum
of these $\beta_{\lambda}$. As $B (q, s)$ is closed (by {\tmem{(i)}}
of Lemma \ref{3.18}), $\lambda (f_{(\gamma, q, s)}) = \sup_{\lambda}
\lambda (f_{(\beta_{\lambda}, q, s)}) = \eta$.

{\tmem{(ii)}} is obvious; {\tmem{(iii)}}: Let $\lambda \in
\Lambda,$and $g = \lambda (f_{(\gamma, q, s)})$, where we take
$\beta = \beta (g)$. Suppose $g : \bar{s} \Longrightarrow s$. Let $g
( \overline{q}) = q$ and set $ g_0 = \tmop{red} (g)$. Then by Lemma
\ref{minimality}{\tmem{(v)}} $g_0 =$ $\lambda (f_{(\beta, q, s)}))$.
If $\gamma \geq \beta$ then $\lambda = \lambda (f_{(\gamma, q,  s|
\lambda)}) \leq \lambda (f_{(\gamma, q, s)})$. \ If $\gamma \leq
\beta$ then

$|f_{(\gamma, q, s| \lambda)}) | = |g_0 | |f_{(\gamma, \bar{q},
\bar{s})} | = |g| |f_{(\gamma, \bar{q}, \bar{s})}| = |f_{(\gamma, q,
s)} |$

{\noindent}where the first equality is justified by Lemma
\ref{minimality}{\tmem{(v)}}. \ \ \ \ \ \mbox{}\hfill Q.E.D.

\begin{lemma}
  \label{2.2}If $f : \bar{s} \Longrightarrow s$, $\mu = \lambda (f),
  \overline{q} \in J_{\bar{s}}, f ( \bar{q}) = q$, then:

  (i) $\Lambda ( \overline{q}, \bar{s}) = \varnothing \longrightarrow \mu \cap
  \Lambda (q, s) = \varnothing$,

  (ii) $f$``$\Lambda ( \overline{q}, \bar{s}) \subseteq \Lambda (q, s| \mu)$,

  (iii) If $\overline{\lambda} = \max \Lambda ( \overline{q}, \bar{s})$ and
  $\lambda = f ( \bar{\lambda})$ then $\lambda = \max (\mu \cap \Lambda (q,
  s)) .$
\end{lemma}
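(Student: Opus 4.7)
The strategy is to apply Lemma \ref{lambdapreserve} systematically, passing via Lemma \ref{initseg} to the unique cofinal reduction $f_{0} : \bar{s} \Longrightarrow s| \mu$ (which agrees with $f$ on $\bar{\nu}$) only when we need to land in the $\Lambda$ of $s| \mu$ rather than of $s$. Throughout I will use the elementary monotonicity $\gamma_{1} \leq \gamma_{2} \Rightarrow \lambda(f_{(\gamma_{1}, q, s)}) \leq \lambda(f_{(\gamma_{2}, q, s)})$, immediate from the definition of $\lambda$ via the $\widetilde{h}_{s}$-hull.

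For (i), I will take $\bar{\gamma} = 0$ in Lemma \ref{lambdapreserve}(i), so that $\gamma = f(0) = 0$. The hypothesis $\Lambda(\bar{q}, \bar{s}) = \varnothing$ forces $\lambda(f_{(0, \bar{q}, \bar{s})}) = \bar{\nu}$, hence Lemma \ref{lambdapreserve}(i) delivers $\lambda(f_{(0, q, s)}) = \nu_{s}$; monotonicity then yields $\lambda(f_{(\gamma, q, s)}) = \nu_{s}$ for every $\gamma \leq \nu_{s}$, so $\Lambda(q, s) = \varnothing$, giving (i) with room to spare. For (ii), given $\bar{\lambda} \in \Lambda(\bar{q}, \bar{s})$ write $\bar{\lambda} = \lambda(f_{(\bar{\gamma}, \bar{q}, \bar{s})})$ with $\bar{\gamma} = \beta(f_{(\bar{\gamma}, \bar{q}, \bar{s})}) \leq \alpha_{\bar{s}} < \bar{\nu}$ (Lemma \ref{minimality}(ii)); applying Lemma \ref{lambdapreserve}(ii) to $f_{0}$ in place of $f$ gives $f_{0}(\bar{\lambda}) = \lambda(f_{(f_{0}(\bar{\gamma}), q, s| \mu)}) \in \Lambda(q, s| \mu)$, and since $\bar{\lambda} < \bar{\nu}$ and $f_{0} \upharpoonright \bar{\nu} = f \upharpoonright \bar{\nu}$ this equals $f(\bar{\lambda})$.

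Part (iii) is the main obstacle. Membership $\lambda = f(\bar{\lambda}) \in \mu \cap \Lambda(q, s)$ follows directly from Lemma \ref{lambdapreserve}(ii) applied to $f$ itself (so no inclusion of $\Lambda(q, s| \mu)$ into $\Lambda(q, s)$ is needed). For maximality, suppose for contradiction that $\lambda' \in (\lambda, \mu) \cap \Lambda(q, s)$ with $\lambda' = \lambda(f_{(\gamma', q, s)})$ and $\gamma' = \beta(\cdots) \leq \alpha_{s}$. Since $\alpha_{\bar{s}} \in p(\bar{s})$ sits in the hull defining $\bar{\lambda}$, $\alpha_{\bar{s}} \leq \bar{\lambda}$, whence $\gamma' \leq \alpha_{s} = f(\alpha_{\bar{s}}) \leq \lambda < \mu = \sup f``\bar{\nu}$. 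Choose the least $\bar{\delta} < \bar{\nu}$ with $f(\bar{\delta}) \geq \gamma'$. Monotonicity gives $\lambda(f_{(f(\bar{\delta}), q, s)}) \geq \lambda' > \lambda$, so Lemma \ref{lambdapreserve}(ii) together with the maximality of $\bar{\lambda}$ forces $\lambda(f_{(\bar{\delta}, \bar{q}, \bar{s})}) = \bar{\nu}$ (otherwise its $f$-image would be $\leq \lambda$). Let $\bar{\gamma}_{0}$ be the least $\bar{\gamma}$ with $\lambda(f_{(\bar{\gamma}, \bar{q}, \bar{s})}) > \bar{\lambda}$ (so automatically $= \bar{\nu}$); a limit $\bar{\gamma}_{0}$ is ruled out because a union of hulls each bounded by $\bar{\lambda}$ cannot be cofinal in $\bar{\nu}$, so $\bar{\gamma}_{0} = \bar{\gamma}_{1} + 1$ is a successor, and $\bar{\delta} \geq \bar{\gamma}_{0}$. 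If $\bar{\delta} > \bar{\gamma}_{0}$, then $f(\bar{\gamma}_{0}) < \gamma'$ and the hull at $f(\bar{\gamma}_{0})$, cofinal in $\nu_{s}$ by Lemma \ref{lambdapreserve}(i), is contained in the hull at $\gamma'$, forcing $\lambda' = \nu_{s}$, a contradiction. The remaining case $\bar{\delta} = \bar{\gamma}_{0}$, giving $f(\bar{\gamma}_{1}) < \gamma' \leq f(\bar{\gamma}_{0})$, is the subtle one: here I transfer the cofinal-in-$\bar{\nu}$ hull at $\bar{\gamma}_{0} = \bar{\gamma}_{1} + 1$ through $f^{\ast}$. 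The resulting image lies in $\widetilde{h}_{s}((f(\bar{\gamma}_{1})+1) \cup \{p(s), q\}) \subseteq \widetilde{h}_{s}(\gamma' \cup \{p(s), q\})$ (using $f(\bar{\gamma}_{1}) < \gamma'$) and is cofinal in $\mu$, so the sup of the hull at $\gamma'$ intersected with $\nu_{s}$ is at least $\mu$, contradicting $\lambda' < \mu$. This last case analysis, and specifically the transfer-through-$f^{\ast}$ step that exploits $f(\bar{\gamma}_{1})+1 \leq \gamma'$, is where I expect the real work to lie.
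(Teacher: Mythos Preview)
There is a genuine gap: you repeatedly invoke Lemma~\ref{lambdapreserve} with the map $f$ itself, but that lemma carries the standing hypothesis $\lambda(f)=\nu_s$, i.e.\ $f$ is \emph{cofinal}. In Lemma~\ref{2.2} we only have $\mu=\lambda(f)$, which may be strictly below $\nu_s$. This breaks your argument at every point where you appeal to \ref{lambdapreserve} for $f$ rather than for $f_0$. In particular your conclusion in (i) that $\Lambda(q,s)=\varnothing$ outright (``with room to spare'') is simply false when $\mu<\nu_s$: there may well be $\lambda(f_{(\gamma,q,s)})$ lying in $[\mu,\nu_s)$. Similarly, the membership clause of (iii) and each appeal to \ref{lambdapreserve}(i),(ii) in your maximality argument for (iii) are unjustified when $f$ is not cofinal.

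The paper avoids this entirely by working with Lemma~\ref{minimality}(iv) directly --- which, incidentally, is exactly what your final ``transfer-through-$f^{\ast}$'' step amounts to, only used at the outset rather than buried at the end of a case analysis. For (i): since $\Lambda(\bar q,\bar s)=\varnothing$, $f_{(0,\bar q,\bar s)}$ is cofinal in $\bar\nu$, so $\operatorname{ran}(f\circ f_{(0,\bar q,\bar s)})$ is cofinal in $\mu$; by \ref{minimality}(iv) this range is contained in $\operatorname{ran}(f_{(0,q,s)})$, whence $\lambda(f_{(0,q,s)})\geq\mu$ and monotonicity finishes. For (iii): take $\bar\beta$ maximal with $\lambda(f_{(\bar\beta,\bar q,\bar s)})=\bar\lambda$, so that the hull at $\bar\beta+1$ is cofinal in $\bar\nu$; set $\beta=f_0(\bar\beta)$, apply \ref{lambdapreserve}(ii) to the cofinal $f_0$ to get $\lambda=\lambda(f_{(\beta,q,s|\mu)})$, and use \ref{minimality}(iv) once more to see $\operatorname{ran}(f_0\circ f_{(\bar\beta+1,\bar q,\bar s)})\subseteq\operatorname{ran}(f_{(\beta+1,q,s|\mu)})$ is cofinal in $\mu$. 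Thus $\lambda=\max\Lambda(q,s|\mu)=\max(\mu\cap\Lambda(q,s))$, the last identification via \ref{Bigl}(iii). Your elaborate case split on $\bar\delta$ versus $\bar\gamma_0$ is then unnecessary.
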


\nod{\tmstrong{Proof:}} \ {\tmem{(i)}} By its definition, if
$\Lambda ( \overline{q}, \bar{s}) = \varnothing$ then $f_{(0,
\overline{q}, \bar{s})}$ is cofinal into $\bar{\nu}$. Hence
$\tmop{ran} (f \text{$f_{(0, \overline{q}, \bar{s})}$})$ is both
cofinal in $\mu$, and contained in $\tmop{ran} (f_{(0, q, s)})$ by
Lemma \ref{minimality}{\tmem{(iv)}}, thus $\mu \cap \Lambda (q, \nu
s) = \varnothing$. This finishes {\tmem{(i)}}. \ Note that By
\ref{Bigl}{\tmem{(iii)}} $\Lambda (q, s| \mu) = \mu \cap \Lambda (q,
s)$. Let $\text{$f_0 = \tmop{red} (f)$}$.

{\tmem{(ii)}} Let $\overline{\lambda} = \lambda (f_{( \bar{\beta},
\overline{q}, \bar{s})}) \in \Lambda ( \overline{q}, \bar{s})$, and let $f (
\overline{\beta}, \overline{\lambda}) = \beta, \lambda = f_0 (
\overline{\beta}, \overline{\lambda})$. \ Then $f_0 (\lambda (f_{(
\bar{\beta}, \bar{q}, \bar{s})})) = \lambda (f_{(\beta, q,  s| \mu)}) \in
\Lambda (q, s| \mu)$.

{\tmem{(iii)}} Let $\overline{\beta} = \sup \{\gamma | \lambda (
\text{$f_{(\gamma, \bar{q}, \bar{s})}$}) \leq \overline{\lambda}
\}$. Then $\lambda ( \text{$f_{(\bar{\beta},{\bar{q}, \bar{s}})}$})
= \overline{\lambda}$, and by the assumed maximality of
$\overline{\beta}$ we have $\lambda ( \text{$f_{\bar{\beta} + 1,}
{\bar{q}, \bar{s}})$}) = \bar{\nu}$. Set $\beta = f ( \bar{\beta}) =
f_0 ( \bar{\beta})$, then by (IV)(2), $\lambda = f_0 (
\bar{\lambda}) = \lambda (f_{\beta, q, s| \mu})$. However $\lambda
(f_{\beta + 1, q, s| \mu}) \geq \mu$, since, again by Lemma
\ref{minimality}{\tmem{(iv)}}, $\tmop{ran} ( \text{$f_0 f_{(
\bar{\beta} + 1,},{\bar{q}, \bar{s})}) \subseteq \tmop{ran}
(f_{(\beta + 1, q, s| \mu)}) .$}$ Thus $\lambda = \max (\Lambda (q,
s| \mu)) = \max (\mu \cap \Lambda (q, s)) .$  \mbox{}\hfill
Q.E.D. \\

The p.r. definitions of $\lambda (f)$, $B (q, s)$, $ \Lambda (q,
s),$ are uniform in the appropriate parameters. If $s = \langle \mu,
M_{\mu} \rangle \in S^+$, then if we may define $F_{\text{$s$}}
=\{f_{(\gamma, q, s| \nu)} | \nu \in S \cap \mu, q \in J_{s| \nu},
\gamma \leq \nu\}, E_{\text{$s$}} =\{\langle \nu, M_{s| \nu}, p (s|
\nu), \tilde{h}_{s| \nu} \rangle | \nu \in S \cap \mu$\},
$G_{\text{s}} =\{ \langle \langle s| \nu, q \rangle, \Lambda (q, s|
\nu \rangle | q \in J_{s| \nu}, \nu \in S \cap \mu\}$. We then have:

\begin{lemma}
  \label{Edefinable} (i) $E_{\text{$s$}}, F_{\text{$s$}}, G_s$ are uniformly
  $\Delta_1 (J_s)$ for $s \in S^+$; \\ (ii) $\mu' < \mu \Longrightarrow $ $E_{\mu'}$, $F_{\mu'},
  G_{\mu'} \in J_s .$
\end{lemma}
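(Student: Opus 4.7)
The plan is to check that all the ingredients entering the definitions of $E_s$, $F_s$ and $G_s$ admit uniformly $\Sigma_1$ (in fact $\Delta_1$) descriptions over $J_s$, and then to deduce (ii) from (i) by an acceptability argument.

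First I would observe that the $J^E$-hierarchy, together with the canonical $\Sigma_1$-Skolem functions $h_M$ and their iterates $\tilde h^{n+1}_M$, has a uniform $\Sigma_1$ description (Lemma \ref{facts}). Hence for each $\nu \in S \cap \mu$ the least level $M_{s|\nu}$ of the $J^E$-hierarchy of $M_s$ that singularises $\nu$ can be located by a $\Sigma_1$ search; uniqueness of this structure follows from the soundness clause (b)(iii) in the definition of $S^+$. The associated data $n_{s|\nu}$, $p_{s|\nu}$, $d_{s|\nu}$, $\alpha_{s|\nu}$, $\kappa_{s|\nu}$ collected in Definition \ref{squareobjects} are then each uniformly $\Sigma_1$-computable from $\nu$, by minimisation over $\Sigma_1^{(n)}$-assertions about $M_{s|\nu}$. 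This already yields $E_s$ as a uniformly $\Delta_1(J_s)$ set.

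Next I would handle $F_s$ and $G_s$. By Definition \ref{basicf}(iv), $f_{(\gamma, q, s|\nu)}$ is the inverse of the transitive collapse of the $\Sigma_1^{(n)}$-hull of $\gamma \cup \{p(s|\nu)\}$ inside $M_{s|\nu}$. Forming a $\Sigma_1$-definable hull and its transitive collapse is a uniform $\Sigma_1$ operation in an acceptable $J$-structure, so the assignment $\langle \nu, q, \gamma\rangle \mapsto f_{(\gamma, q, s|\nu)}$ is $\Sigma_1(J_s)$; this gives $F_s$. For $G_s$, $\lambda(f)$ is the supremum of the image of $\nu_{d(f)}$ under $f$, and $\Lambda(q, s|\nu) = \{\lambda(f_{(\gamma, q, s|\nu)}) \mid \gamma \leq \nu\}$, so these are uniformly $\Sigma_1$-computable in their arguments. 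In each case the soundness-based uniqueness of the objects involved makes the complementary clauses $\Sigma_1$ as well, so each of the three sets is $\Delta_1(J_s)$ uniformly in $s$.

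The main delicate point is to verify that the search for $M_{s|\nu}$, for the standard parameter $p(s|\nu)$, and for the transitive collapse of the relevant hull, can all be realised by genuinely $\Sigma_1$ (rather than merely $\Sigma_\omega$) formulas; this rests on the uniform fine-structural apparatus of the $J^E$-hierarchy and on the coding of standard parameters provided by the soundness hypothesis built into $S^+$. Once (i) is granted, (ii) follows quickly. For any $\mu' < \mu$, every element of $E_{\mu'}$, $F_{\mu'}$, $G_{\mu'}$ is built from a mouse $M_{s|\nu}$ with $\nu < \mu'$ and from parameters drawn from $J_{s|\nu}$, hence from a level of the $J^{E^{M_s}}$-hierarchy of bounded height below $\mu$. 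Each of $E_{\mu'}$, $F_{\mu'}$, $G_{\mu'}$ is therefore a bounded $\Delta_1(J_s)$-definable set, and by acceptability of $J_s$ it is an element of $J_s$.
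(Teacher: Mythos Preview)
Your proposal is correct and is in the same spirit as the paper's treatment. The paper gives no explicit proof of this lemma at all; it simply precedes the statement with the remark that ``the p.r.\ definitions of $\lambda(f)$, $B(q,s)$, $\Lambda(q,s)$ are uniform in the appropriate parameters'' and leaves the lemma as an immediate consequence. Your argument is a detailed unpacking of exactly this claim: once one observes that for $\nu<\mu=\nu_s$ the singularising structure $M_{s|\nu}$ is an \emph{element} of $J_s$ (since $J_s$ is a union of admissibles and $\nu$ is singularised below the next admissible), every fine-structural datum attached to it---$n_{s|\nu}$, $p_{s|\nu}$, $d_{s|\nu}$, $\alpha_{s|\nu}$, the hull and its collapse---is computed by a $\Delta_0$ statement in that parameter, so the whole assignment is uniformly $\Delta_1(J_s)$; (ii) then follows by acceptability exactly as you say. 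One small clarification: your phrase ``by minimisation over $\Sigma_1^{(n)}$-assertions about $M_{s|\nu}$'' might suggest higher complexity than is needed---the point is rather that any $\Sigma_1^{(n)}(M_{s|\nu})$ assertion is, from the vantage of $J_s$, simply a bounded statement about the set $M_{s|\nu}$, so no genuine $\Sigma_1^{(n)}$ complexity leaks into the $J_s$-definition.
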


\begin{lemma}
  Let $f : \bar{s} \Longrightarrow s$ with $\overline{q} \in J_{\bar{s}}, f (
  \bar{q}) = q.$ Then

  (i) If $f$ is cofinal then \ $|f| : \langle J_{\bar{s}}, \Lambda ( \bar{q},
  \bar{s}) \rangle \longrightarrow_{\Sigma_1} \langle J_s, \Lambda (q, s)
  \rangle$;

  (ii) Otherwise: \ $|f| : \langle J_{\bar{s}}, \Lambda ( \bar{q}, \bar{s})
  \rangle \longrightarrow_{\Sigma_0} \langle J_s, \Lambda (q, s) \rangle$
\end{lemma}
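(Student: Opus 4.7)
The plan is to establish both parts by reducing to the single biconditional
\[
(\ast)\qquad x\in\Lambda(\bar q,\bar s) \iff f(x)\in\Lambda(q,s)\quad \text{for all }x\in J_{\bar s}.
\]
Granting $(\ast)$, part (ii) follows at once: by Definition \ref{basicf}, $|f|$ is already $\Sigma_0$-elementary as a map of $J$-structures, and $(\ast)$ supplies the extra preservation of the new unary predicate needed to upgrade this to $\Sigma_0$-elementarity in the language $\{\in,\Lambda\}$. Part (i) then follows from (ii) by the standard observation that a cofinal $\Sigma_0$-elementary embedding is automatically $\Sigma_1$-elementary: when $f$ is cofinal, $|f|\colon J_{\bar s}\to J_s$ is cofinal (since $f$``$\bar\nu$ is cofinal in $\nu_s$), so any $y\in J_s$ witnessing an existential lies in some $|f|(J_{\bar\tau})$, and the resulting bounded existential pulls back to $J_{\bar s}$ using (ii).

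The forward direction of $(\ast)$ is essentially Lemma \ref{2.2}(ii), which gives $f$``$\Lambda(\bar q,\bar s)\subseteq\Lambda(q,s|\mu)$ with $\mu=\lambda(f)$, combined with Lemma \ref{Bigl}(iii), giving $\Lambda(q,s|\mu)=\mu\cap\Lambda(q,s)\subseteq\Lambda(q,s)$. For the backward direction, suppose $f(x)\in\Lambda(q,s)$ with $x\in J_{\bar s}$: then $x<\bar\nu$ forces $f(x)<\mu$, so $f(x)\in\mu\cap\Lambda(q,s)=\Lambda(q,s|\mu)$, and by Lemma \ref{initseg} I may factor through the cofinal map $f_0=\mathrm{red}(f)\colon\bar s\Longrightarrow s|\mu$. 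Since $f_0\upharpoonright J_{\bar s}=f\upharpoonright J_{\bar s}$, this reduces the problem to the cofinal case $\mu=\nu_s$.

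In the cofinal case I would appeal to the uniform $\Delta_1(J_s)$-definability of $\Lambda$ supplied by Lemma \ref{Edefinable}: letting $\varphi$ and $\psi$ be the common $\Sigma_1$ and $\Pi_1$ defining formulas, the $\Sigma_1$-elementarity of $|f|\colon J_{\bar s}\to_{\Sigma_1} J_s$ built into Definition \ref{basicf} preserves both biconditionally, immediately yielding $(\ast)$. This last step---the backward direction in the cofinal case---is where the hard work lies; should $\to_{\Sigma_1}$ be interpreted only as upward preservation, one would instead argue combinatorially through Lemma \ref{lambdapreserve}(ii), showing that cofinality of $f$ makes $\bar\beta\mapsto f(\bar\beta)$ a bijection between the closed sets $B(\bar q,\bar s)$ and $B(q,s)$ (using $f(\alpha_{\bar s})=\alpha_s$ from Lemma \ref{4.3}, the inclusions $B(\bar q,\bar s)\subseteq\alpha_{\bar s}+1$ and $B(q,s)\subseteq\alpha_s+1$, and Lemma \ref{3.18}(ii)), which induces the desired bijection $\Lambda(\bar q,\bar s)\cong\Lambda(q,s)$ via $\lambda\mapsto f(\lambda)$.
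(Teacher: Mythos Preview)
Your overall architecture matches the paper's: derive (ii) from (i) by factoring through $f_0=\mathrm{red}(f)\colon\bar s\Longrightarrow s|\mu$, and prove (i) by combining the $\Sigma_1$-elementarity of $|f|$ with the uniform definability supplied by Lemma~\ref{Edefinable}. The reduction step and the forward direction of $(\ast)$ are fine.

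The gap is in your appeal to Lemma~\ref{Edefinable} for the backward direction in the cofinal case. That lemma does \emph{not} assert that the relation ``$x\in\Lambda(q,s)$'' is $\Delta_1(J_s)$. What it asserts is that the class $G_s=\{\langle\langle s|\nu,q\rangle,\Lambda(q,s|\nu)\rangle\mid q\in J_{s|\nu},\ \nu\in S\cap\nu_s\}$ is uniformly $\Delta_1(J_s)$: in other words, the \emph{proper initial segments} $\Lambda(q,s|\nu)$ for $\nu<\nu_s$ are available as elements of $J_s$, computed by a uniform $\Delta_1$ recipe. The full set $\Lambda(q,s)$ is generally unbounded in $\nu_s$, is not an element of $J_s$, and there is no claim that membership in it has a $\Delta_1(J_s)$ definition. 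So you cannot simply pull back $f(x)\in\Lambda(q,s)$ along a $\Sigma_1$-elementary map.

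The paper closes this gap by an amenability argument rather than a pointwise one. It shows that for arbitrarily large $\bar\tau<\nu_{\bar s}$ one has $|f|(\Lambda(\bar q,\bar s)\cap\bar\tau)=\Lambda(q,s)\cap f(\bar\tau)$; this is exactly the condition needed to upgrade $\Sigma_1$-elementarity of $|f|$ on $J$-structures to $\Sigma_1$-elementarity in the expanded language. For the $\bar\tau$ one takes any $\bar\lambda\in\Lambda(\bar q,\bar s)$: by Lemma~\ref{Bigl}(iii), $\Lambda(\bar q,\bar s)\cap\bar\lambda=\Lambda(\bar q,\bar s|\bar\lambda)$, and \emph{this} object is an element of $J_{\bar s}$ computed by $G_{\bar s}$, so $|f|$ sends it to $\Lambda(q,s|\lambda)$, which by Lemma~\ref{2.2}(ii) and Lemma~\ref{Bigl}(iii) again equals $\lambda\cap\Lambda(q,s)$. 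When $\Lambda(\bar q,\bar s)$ is unbounded this suffices; the bounded and empty cases are handled directly by Lemma~\ref{2.2}(i),(iii). Your alternative combinatorial route through Lemma~\ref{lambdapreserve} and $B$-sets could presumably be pushed through, but the amenability-via-coherence argument is the clean way to exploit Lemma~\ref{Edefinable} correctly.
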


\nod{\bf{Proof: {\noindent}}}{\tmem{(i) }}It suffices to show that
$|f| (\Lambda ( \bar{q}, \bar{s}) \cap \bar{\tau}) = \Lambda (q, s)
\cap f (\tau)$ for arbitrarily large $\overline{\tau} <
\nu_{\bar{s}}$. However this follows from the last lemma and
\ref{2.2}.

However, if $\bar{\lambda} \in \Lambda ( \bar{q}, \bar{s}),$ then \
$\Lambda ( \bar{q}, \bar{s}) \cap \bar{\lambda} = \Lambda ( \bar{q},
\bar{s} | \bar{\lambda})$ by Lemma \ref{Bigl}, and by the last
lemma, if $f ( \bar{\lambda}) = \lambda$, we have $f (\Lambda (
\bar{q}, \bar{s} | \bar{\lambda})) = \Lambda (q, s| \lambda) =
\lambda \cap \Lambda (q, s)$ (with the latter equality by Lemma
\ref{Bigl} again). If $\Lambda ( \bar{q}, \bar{s})$ is unbounded in
$\nu_{\bar{s}}$, this suffices; if it is empty or bounded, then the
Lemma \ref{2.2}  takes care of these cases. \

For non-cofinal maps {\tmem{(ii) }}we still have, if $\lambda (f) =
\mu,$ that $$|f_0 | : \langle J_{\bar{s}}, \Lambda ( \bar{q},
\bar{s}) \rangle \longrightarrow_{\Sigma_1} \langle J_{s| \mu},
\Lambda (q, s| \mu) \rangle$$ where $f_0 = \tmop{red} (f)$. But
$\Lambda (q, s| \mu) = \mu \cap \Lambda (q, s)$, and $|f_0 | = |f|$.
\ \mbox{}\hfill Q.E.D. \\

The $C_s $ sets may be decomposed into a finite sequence of sets of
the form \ $\Lambda (l^i_s, s)$.
%This will enable us to
%extend the results of the last few lemmas from the $ \Lambda (q, s)$
%sets to $C_s$ sets, whilst at the same time giving us a handle on
%other data about the $C_s$.

\begin{definition}
  Let $s \in S^+, \eta \leq \nu_s$. $l^i_{\eta s} < \nu_s$ is defined for $i <
  m_{\eta s} \leq \omega$ by induction on $i :$
$$l_{\eta s}^0 =
  0; \ \ \ \ \ \ \ l^{i + 1}_{\eta s} \simeq \max (\eta \cap \Lambda
  (l^i_{\eta s}, s)) .$$

  We also write $l^i$ for $l^i_{\eta s}$ if the context is clear; also we set
  $l^i_s \simeq l^i_{\nu_s s}$; $m_s = m_{\nu_s s}$.
\end{definition}

{\noindent}Some  facts about this definition may be easily checked:

Fact \label{lfact}$\bullet  \text{ $l^i_{\eta s} \leq l^{i +
1}_{\eta s}$ $(i < m_{\eta s})$ is monotone} $ \ \ \ \ \ $\bullet
\,\,i
> 0 \longrightarrow l^i_{\eta s} \in \eta \cap C_s .$

$\bullet $ Let $l^i_{\eta s}$ be defined, and suppose $l^i_{\eta s} < \mu \leq
\eta$. Then $l^i_{\eta s} = l^i_{\mu s} .$

{\noindent}(The last here is by induction on $i$.)

\begin{lemma}
  \label{3.3} Let $f : \bar{s} \Longrightarrow s .$ (i) If $\lambda = \lambda
  (f)$ then $l^i_{\lambda s} \simeq f (l^i_{\bar{s}})$;

  (ii) let $ \overline{\eta} < \nu_{\bar{s}}, f ( \bar{\eta}) = \eta$; then
  $l^i_{\eta s} \simeq f (l^i_{\bar{\eta}  \bar{s}}) .$
\end{lemma}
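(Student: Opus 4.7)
Both claims are proved by simultaneous induction on $i$. The base case $i=0$ is immediate, since $l^0_{\eta s}=0=l^0_{\bar\eta\bar s}$ in every structure and $f(0)=0$. For the inductive step I treat (i) and (ii) separately, because the two use different pieces of the machinery.

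For (i), assume the induction hypothesis $l^i_{\lambda s}\simeq f(l^i_{\bar s})$, and set $\bar q=l^i_{\bar s}$, $q=f(\bar q)=l^i_{\lambda s}$. Since $\Lambda(\bar q,\bar s)\subseteq\nu_{\bar s}$, the definition reads $l^{i+1}_{\bar s}=\max\Lambda(\bar q,\bar s)$, while on the other side $l^{i+1}_{\lambda s}=\max(\lambda\cap\Lambda(q,s))$ because $\lambda=\lambda(f)$. Now I invoke Lemma \ref{2.2}: clause (i) handles the case $\Lambda(\bar q,\bar s)=\varnothing$, giving $\lambda\cap\Lambda(q,s)=\varnothing$ and hence both sides undefined; clause (iii) applied with $\bar\lambda=\max\Lambda(\bar q,\bar s)=l^{i+1}_{\bar s}$ then yields $f(\bar\lambda)=\max(\lambda\cap\Lambda(q,s))=l^{i+1}_{\lambda s}$, as required. (The intermediate bounded case is also covered by (iii), since then $\max\Lambda(\bar q,\bar s)$ is just the max taken over all of $\bar\nu$.)

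For (ii), assume $l^i_{\eta s}\simeq f(l^i_{\bar\eta\bar s})$, and set $\bar q=l^i_{\bar\eta\bar s}$, $q=f(\bar q)=l^i_{\eta s}$. Here I cannot appeal to Lemma \ref{2.2} directly since $\bar\eta<\nu_{\bar s}$ is arbitrary rather than $\bar\nu_{\bar s}$ itself. Instead I use the unnamed lemma immediately before the definition of $l^i$, which supplies $|f|:\langle J_{\bar s},\Lambda(\bar q,\bar s)\rangle\longrightarrow_{\Sigma_0}\langle J_s,\Lambda(q,s)\rangle$. The assertion ``$\bar\lambda=\max(\bar\eta\cap\Lambda)$'' is $\Delta_0$ in this expanded structure (membership in $\Lambda$ is atomic, and all the quantifiers in ``$\bar\lambda\in\bar\eta\cap\Lambda$ and $\forall\mu(\bar\lambda<\mu<\bar\eta\to\mu\notin\Lambda)$'' are bounded by $\bar\eta$), so $\Sigma_0$-preservation transfers it to ``$f(\bar\lambda)=\max(\eta\cap\Lambda(q,s))$''. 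The ``no maximum'' case splits into emptiness and $\bar\eta$ being a limit point of $\bar\eta\cap\Lambda$; both are likewise $\Delta_0$ in the expanded structure (the latter, using that $\bar\eta\in J_{\bar s}$, is a bounded $\forall\exists$ in $\bar\eta$). Hence definedness aligns under $f$ and, when defined, the value is preserved, giving $l^{i+1}_{\eta s}\simeq f(l^{i+1}_{\bar\eta\bar s})$.

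The principal obstacle is keeping the Kleene-$\simeq$ bookkeeping honest: one must verify that undefinedness is also preserved in both directions. In (i) this is taken care of by Lemma \ref{2.2}(i); in (ii) it is the reason one needs the ``limit point of $\Lambda\cap\bar\eta$'' phrasing to be $\Delta_0$ in the expanded structure, which is why the preservation lemma for $|f|$ (rather than raw $\Sigma_0^{(n)}$-elementarity of $f^\ast$) is exactly the tool required.
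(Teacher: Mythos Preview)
Your proof is correct and follows essentially the same route as the paper: for (i) the paper also reduces to Lemma~\ref{2.2}(iii) for the key ``middle equality'' $f(\max\Lambda(l^j_{\bar s},\bar s))\simeq\max(\lambda\cap\Lambda(l^j_{\lambda s},s))$, and for (ii) the paper's ``proved similarly'' is exactly your $\Sigma_0$-preservation argument via the expanded-structure lemma. One small point worth tightening in (i): Lemma~\ref{2.2}(i) is only a one-way implication, so to make the $\simeq$ bookkeeping complete you should also note (via \ref{2.2}(ii) and cofinality of $|f|$ into $\lambda$) that if $\Lambda(\bar q,\bar s)$ is unbounded in $\nu_{\bar s}$ then $\lambda\cap\Lambda(q,s)$ is unbounded in $\lambda$.
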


\nod{\tmstrong{Proof}} {\tmem{(i)}} By induction on $i$. \ If $i =
0$ this is trivial. Suppose $i = j + 1$. Then, as inductive
hypothesis \ $l^j_{\lambda s} = f (l^j_{\bar{s}})$, and thus \ $|f|
: \langle J_{\bar{s}}, \Lambda (l^j_{\bar{s}}, \bar{s}) \rangle
\longrightarrow_{\Sigma_1} \langle J_{s| \lambda}, \Lambda
(l^j_{\lambda s}, s| \lambda) \rangle,$ by the last lemma, as $|
\tmop{red} (f) | = |f|.$ \ However $\Lambda (l^j_{\lambda s}, s|
\lambda) = \lambda \cap \Lambda (l^j_{\lambda s}, s)$, by
\ref{Bigl}. Hence: $f (l_{\bar{s}}^i) \simeq f (\max \Lambda
(l^j_{\bar{s}}, \bar{s})) \simeq \max (\lambda \cap \Lambda
(l^j_{\lambda s}, s)) \simeq l^i_{\lambda s}$ with the middle
equality holding by Lemma \ref{2.2}(iii). \ {\tmem{(ii)}} is proved
similarly.  \mbox{}\hfill Q.E.D.

\begin{corollary}
  \label{3.5}(i) Let $f : \bar{s} \Longrightarrow s$ cofinally. Then $l^i_s
  \simeq f (l^i_{\bar{s}})$.

  (ii) Let $\lambda \in C_s .$ Then $l^i_{\lambda s} \simeq l^i_{s| \lambda}
  .$
\end{corollary}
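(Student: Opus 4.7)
The plan is to derive both parts almost immediately from Lemma~\ref{3.3} together with the factorization through $s|\lambda$ provided by Lemma~\ref{initseg}. Part~(i) is the transparent case. If $f:\bar{s}\Longrightarrow s$ is cofinal then by definition $\lambda(f)=\nu_s$, and Lemma~\ref{3.3}(i) specialised to this $\lambda$ reads $l^i_{\nu_s\,s}\simeq f(l^i_{\bar{s}})$. Since $l^i_s\simeq l^i_{\nu_s\,s}$ by the convention in the definition, this is exactly the desired equality. I would write this up as a single line.

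For part~(ii), given $\lambda\in C_s$ there is, by the very definition of $C_s^+$, some $f:\bar{s}\Longrightarrow s$ with $\lambda(f)=\lambda$. I would apply Lemma~\ref{initseg} to obtain the (unique) factored map $f_0:\bar{s}\Longrightarrow s|\lambda$ with $f_0\upharpoonright\bar{\nu}=f\upharpoonright\bar{\nu}$; this $f_0=\tmop{red}(f)$ is cofinal into $\nu_{s|\lambda}=\lambda$ by construction in the interpolation lemma. Applying part~(i) of the present corollary to $f_0$ gives
\[
l^i_{s|\lambda}\;\simeq\; f_0(l^i_{\bar{s}}),
\]
while applying Lemma~\ref{3.3}(i) directly to $f$ at its $\lambda=\lambda(f)$ gives
\[
l^i_{\lambda\, s}\;\simeq\; f(l^i_{\bar{s}}).
\]
Since $l^i_{\bar{s}}<\bar{\nu}$ (it lies below the top of $\bar{s}$ whenever defined) and $f_0$ and $f$ agree on $\bar{\nu}$, the right-hand sides coincide, giving $l^i_{\lambda\,s}\simeq l^i_{s|\lambda}$.

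The only mildly delicate point is the bookkeeping: one has to know that whenever $l^i_{\bar s}$ is defined it is strictly below $\bar\nu$ (so that $f_0$ and $f$ give the same value on it), and one has to invoke the right part of Lemma~\ref{initseg} for the existence and cofinality of $\tmop{red}(f)$. Neither of these is a real obstacle, so I would expect the whole argument to fit in a short paragraph, essentially the two displays above with a sentence matching the two sides. The substantive content has all been packed into Lemmas~\ref{3.3} and~\ref{initseg}; the corollary is just the reading of those lemmas at $\lambda=\lambda(f)$ in the two possible regimes (cofinal map, or passage to an initial segment).
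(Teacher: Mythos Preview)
Your proof is correct and follows essentially the same approach as the paper: for (i) you read off Lemma~\ref{3.3}(i) at $\lambda(f)=\nu_s$, and for (ii) you factor through $f_0=\tmop{red}(f)$, apply Lemma~\ref{3.3}(i) to $f$ and part~(i) to the cofinal $f_0$, and match the two via $f\upharpoonright\bar\nu=f_0\upharpoonright\bar\nu$. The paper compresses this into the single chain $l^i_{\lambda s}\simeq f(l^i_{\bar s})\simeq f_0(l^i_{\bar s})\simeq l^i_{s|\lambda}$, which is precisely your argument.
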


\nod{\tmstrong{Proof}} {\tmem{(i)}} is immediate. For {\tmem{(ii)}}
choose $f : \bar{s} \Longrightarrow s$ with $\lambda = \lambda (f)$,
and set $f_0 = \tmop{red} (f)$. Then $l^i_{\lambda s} \simeq f
(l^i_{\bar{s}}) \simeq f_0 (l^i_{\bar{s}}) \simeq l^i_{s| \lambda}$
with the last equality holding from {\tmem{(i)}}. \ \ \ \ \ \ \ \ \
\ \ \ \ \ \mbox{}\hfill Q.E.D.

\begin{lemma}
  \label{3.7}Let $\eta \leq \nu, \lambda = \min (C_s^+ \backslash \eta) .$
  Then $l^i_s \simeq l^i_{\lambda s} \simeq l^i_{\eta s} $ (for any $i <
  \omega$ for which either side is defined).
\end{lemma}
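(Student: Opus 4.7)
The plan is to argue by induction on $i$, establishing the three Kleene equalities simultaneously. The base case $i=0$ is immediate since $l^0_s = l^0_{\lambda s} = l^0_{\eta s} = 0$. The central observation driving the induction is that by the minimality of $\lambda = \min(C_s^+ \setminus \eta)$, we have $C_s^+ \cap [\eta, \lambda) = \emptyset$; since $\Lambda(q, s) \subseteq C_s^+$ for any $q$ (by definition of $C_s^+ = \{\lambda(f) \mid f \Longrightarrow s\}$), it follows that
\[
\Lambda(q, s) \cap [\eta, \lambda) \;=\; \emptyset \quad \text{for every } q \in J_s.
\]

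For the inductive step, write $l$ for the common value $l^i_s = l^i_{\lambda s} = l^i_{\eta s}$ supplied by the hypothesis. The equality $l^{i+1}_{\eta s} \simeq l^{i+1}_{\lambda s}$ is then immediate: since $\Lambda(l,s) \cap [\eta, \lambda) = \emptyset$, the two sets $\eta \cap \Lambda(l,s)$ and $\lambda \cap \Lambda(l,s)$ coincide, so their maxima agree and exist simultaneously. This disposes of the second $\simeq$.

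For the first equality $l^{i+1}_s \simeq l^{i+1}_{\lambda s}$, the case $\lambda = \nu_s$ is trivial (the three truncations are all by $\nu_s$). So assume $\lambda < \nu_s$, hence $\lambda \in C_s$. Appealing to the third item of the list of facts following the definition of $l^i_{\eta s}$, namely that $l^{i+1}_{\nu_s s} = l^{i+1}_{\mu s}$ whenever $l^{i+1}_s < \mu \leq \nu_s$, it suffices to strengthen the induction and show $l^{i+1}_s \leq \lambda$. Here I would use that $\lambda \in C_s^+$ gives a map $g : \bar{s} \Longrightarrow s$ with $\lambda(g) = \lambda$, and apply Lemma \ref{minimality}(iv) together with Lemma \ref{lambdapreserve}: every $f_{(\gamma, l, s)}$ with $\gamma \leq \lambda$ factors through (or is dominated in range by) the reduct $g_0 = \operatorname{red}(g)$, so its $\lambda$-value is at most $\lambda$. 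Since $\Lambda(l,s) \subseteq C_s$ and $C_s \cap [\eta,\lambda) = \emptyset$, the maximum $l^{i+1}_s = \max(\nu_s \cap \Lambda(l,s))$ is at most $\lambda$, and equal to the max in $\lambda$.

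The main obstacle is precisely this last bound $l^{i+1}_s \leq \lambda$; the equalities concerning $\eta$ are essentially combinatorial consequences of the minimality of $\lambda$, but showing that the iteration $l^i_s$ cannot leapfrog over $\lambda$ requires the fine-structural fact that the hulls $\tilde{h}_s(\gamma \cup \{l, p(s)\})$, which define the elements of $\Lambda(l,s)$, are all contained in $\operatorname{ran}(g^\ast)$ once $l \in \operatorname{ran}(g^\ast)$ and $\gamma \leq \lambda$. The inductive hypothesis $l^i_s < \lambda$ together with the minimality clause of Lemma \ref{minimality}(i) delivers $l \in \operatorname{ran}(g^\ast)$, and Lemma \ref{lambdapreserve}(ii) then transfers the bound through the map, closing the induction.
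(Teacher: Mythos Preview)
Your argument for the second Kleene equality $l^i_{\lambda s} \simeq l^i_{\eta s}$ is exactly the paper's: by induction, with common value $l = l^j_{\eta s} = l^j_{\lambda s}$, one uses $\Lambda(l,s) \subseteq C_s$ and $C_s \cap [\eta,\lambda) = \emptyset$ to conclude $\eta \cap \Lambda(l,s) = \lambda \cap \Lambda(l,s)$, whence the two maxima coincide. That part is fine.

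The trouble is entirely with the first Kleene equality $l^i_s \simeq l^i_{\lambda s}$. This claim is simply false in general, and the paper does not in fact prove it: its inductive step ends with $l^i_{\eta s} = l^i_{\lambda s} = l^i_{s|\lambda}$, never returning to $l^i_s$. (The occurrence of $l^i_s$ in the statement is almost certainly a typo for $l^i_{s|\lambda}$, and the applications of the lemma later in the paper only use $l^i_{\eta s} = l^i_{\lambda s}$.) To see the equality with $l^i_s$ fails, take $\eta$ below $\min C_s$; then $\lambda = \min C_s$, so $\lambda \cap \Lambda(0,s) \subseteq \lambda \cap C_s = \emptyset$ and $l^1_{\lambda s}$ is undefined, while $l^1_s = \max \Lambda(0,s)$ is defined whenever $\Lambda(0,s) \neq \emptyset$.

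Your proposed proof of this false claim accordingly has a genuine gap. You argue that ``every $f_{(\gamma,l,s)}$ with $\gamma \le \lambda$'' has $\lambda$-value at most $\lambda$, and deduce $l^{i+1}_s \le \lambda$. But $l^{i+1}_s = \max(\nu_s \cap \Lambda(l,s))$ ranges over \emph{all} $\gamma \le \nu_s$, not only $\gamma \le \lambda$; there is no reason the hulls $\tilde h_s(\gamma \cup \{l, p(s)\})$ for $\gamma > \lambda$ should be bounded by $\lambda$. Moreover, even for $\gamma \le \lambda$ your factoring argument needs $\gamma \le \beta(g)$ and $l \in \operatorname{ran}(g^\ast)$, neither of which follows from the hypotheses you have available (the map $g$ witnessing $\lambda \in C_s^+$ is an arbitrary $f \Longrightarrow s$ with $\lambda(f) = \lambda$, not one built from $l$). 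So drop the attempt at $l^i_s$; what survives is exactly the paper's lemma and proof.
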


\nod{\tmstrong{Proof}} Induction on $i$, again $i = 0$ is trivial.
Suppose  $l^j_s = l^j_{\eta s} = l^j_{\lambda s}$ and $i = j + 1$.
Set $l = l^j_{\eta s}$, then we have: $\Lambda (l, s) \cap \eta =
\Lambda (l, s) \cap \lambda$, since $\Lambda (l, s) \subseteq C_s$
and $C_s \cap [\eta, \lambda) = \varnothing .$ Suppose, without loss
of generality that $l^i_{\eta s}$ is defined. Then $l^i_{\eta s} =
\max (\eta \cap \Lambda (l, s)) = \max (\lambda \cap \Lambda (l, s))
= l^i_{\lambda s} = l^i_{s| \lambda}$. \ \ \ \ \ \mbox{}\hfill
Q.E.D.

\begin{lemma}
  Let $j \leq i < m_s .$ Set $l = l^i_s$. Then $l^j_s \in \tmop{ran} (f_{0, l,
  s})$.
\end{lemma}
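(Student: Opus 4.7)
The plan is to pull $l^j_s$ back through the canonical map $f = f_{(0,l,s)}$, invoking the commutation results already at our disposal. Two ingredients combine: (i) since $l = l^i_s$ lies in $C_s^+$, the $l^j$-sequence indexed by $\nu_s$ agrees with that indexed by $l$; (ii) Lemma \ref{3.3}(ii) then places each $l^j_{l,s}$ into $\tmop{ran}(f)$ via the $f$-preimage of $l$.

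In detail, the case $i = 0$ is immediate: $l = l^0_s = 0$ and $l^0_s = 0 \in \tmop{ran}(f_{(0,0,s)})$ trivially. For $i \geq 1$, the bullet-point Fact recorded just after the definition of $l^i_{\eta,s}$ gives $l = l^i_s \in C_s \subseteq C_s^+$, so $\min(C_s^+ \setminus l) = l$, and Lemma \ref{3.7} applied with $\eta = l$ yields $l^j_s = l^j_{l,s}$ for every $j \leq i$. Set $f = f_{(0,l,s)} : \bar{s} \Longrightarrow s$; that this is a genuine element of $\mathbb{F}$ is what Lemma \ref{4.3} (applied to the inverse transitive collapse of $\widetilde{h}_s(\{l,p(s)\})$) is designed to deliver. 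By the minimality clause of Lemma \ref{minimality}(i), $l \in \tmop{ran}(f^\ast)$, and since $l < \nu_s$ while $f^\ast(\nu_{\bar{s}}) = \nu_s$, the $\Sigma_0$-preservation of $f^\ast$ forces $l = f(\bar{l})$ for a unique $\bar{l} < \nu_{\bar{s}}$. Now Lemma \ref{3.3}(ii), applied with $\bar{\eta} = \bar{l}$ and $\eta = l$, gives $l^j_{l,s} \simeq f(l^j_{\bar{l},\bar{s}})$; the left-hand side is defined (it equals $l^j_s$, which exists because $j < m_s$), hence so is the right, and the value sits in $\tmop{ran}(f)$. Stringing the equalities together, $l^j_s = l^j_{l,s} = f(l^j_{\bar{l},\bar{s}}) \in \tmop{ran}(f_{(0,l,s)})$.

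No serious obstacle appears: all the fine-structural work has already been absorbed into Lemmas \ref{3.3}, \ref{3.7}, and \ref{4.3}. The only subtlety deserving a moment of thought is to confirm that $l$ is actually attained as an honest value $f(\bar{l})$ rather than merely as the supremum $\lambda(f)$; but since $l < \nu_s$, this is immediate from $\Sigma_0$-preservation of $f^\ast$ together with $f^\ast(\nu_{\bar{s}}) = \nu_s$. Had the lemma instead concerned an ordinal sitting precisely at the $\sup f''\nu_{\bar{s}}$, one would have had to invoke Lemma \ref{3.3}(i) in place of (ii), but the two variants dovetail and no case analysis is needed for our $l$.
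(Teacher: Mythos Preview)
Your argument is close in spirit to the paper's but contains a small slip and rests on a dubious citation. The equality $l^j_s = l^j_{l,s}$ that you extract from Lemma~\ref{3.7} actually fails at $j=i$: with $l=l^i_s$ one has $l^i_{l,s}=\max\bigl(l\cap\Lambda(l^{i-1}_s,s)\bigr)$, whereas $l^i_s=l=\max\Lambda(l^{i-1}_s,s)$; since $l\notin l$, the two maxima differ. (More generally, the $l^i_s$ appearing in the statement of Lemma~\ref{3.7} seems to be a misprint --- its proof only establishes $l^i_{\eta s}=l^i_{\lambda s}$ --- so quoting it for an identity involving $l^j_s$ is unsafe.) The repair is painless: for $j<i$ one checks directly that $l^j_s<l$ strictly (if $l^j_s=l$ then $l\in\Lambda(l,s)$, impossible since $l\in\operatorname{ran}(f_{(\gamma,l,s)}^{\ast})$ forces $\lambda(f_{(\gamma,l,s)})>l$), and then the third bullet-point Fact already gives $l^j_s=l^j_{l,s}$; the case $j=i$ is immediate because $l^i_s=l$ is a parameter of the hull.

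The paper's own proof sidesteps this by working with $\lambda=\lambda(f)$ rather than $l$, and invoking Lemma~\ref{3.3}(i) rather than (ii). Since $l\in\operatorname{ran}(f)$ with $l<\nu_s$, one has $l<\lambda$, hence $l^j_s\leq l<\lambda$ for \emph{every} $j\leq i$; the Fact then yields $l^j_s=l^j_{\lambda s}=f(l^j_{\bar s})$ uniformly, with no need to split off $j=i$. Your closing remark anticipates exactly this alternative --- the paper simply takes it from the start.
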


\nod{\tmstrong{Proof}} Set $f = f_{(0, l^{}, s)}$. Suppose $f :
\bar{s} \Longrightarrow s$, and $\lambda = \lambda (f)$. Then
$l^j_{\lambda s} \simeq f (l^j_{\bar{s}})$ by Lemma
\ref{3.3}{\tmem{(i).}} But $l^j_s$ exists, and $l^j_s < \lambda \leq
\nu_s$. Hence $l^j_s = l^j_{\lambda s} = f (l^j_{\bar{s}})$. \mbox{}\hfill Q.E.D. \\

%We now prove a lemma showing that
Importantly the $\langle l^j_{\lambda s} \rangle$ sequences are
finite.

\begin{lemma}
  Let $s \in S^+, \eta \leq \nu_s$. Then $m_{\eta s} < \omega$.
\end{lemma}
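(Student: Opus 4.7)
The approach is to argue by contradiction: suppose $m_{\eta s} = \omega$. First observe that the sequence $\langle l^i_{\eta s}\rangle_{i < \omega}$ is strictly increasing---if $l^{i+1}$ is defined, then $l^{i+1} \in \Lambda(l^i, s)$, and by Lemma~\ref{Bigl}(iii) we have $l^i \in J_{s|l^{i+1}}$, forcing $l^i < l^{i+1}$ (an ordinal lying in $J_{s|l^{i+1}}$ must be below $\nu_{s|l^{i+1}} = l^{i+1}$).

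Next I would reduce to the case $\eta = \nu_s$ with $\langle l^i\rangle$ cofinal in $\nu_s$. By Lemma~\ref{3.7}, $l^i_{\eta s} = l^i_s$ whenever defined, so $m_s = \omega$ and we may take $\eta = \nu_s$. Let $l^* = \sup_i l^i_s$. If $l^* < \nu_s$, then $l^* \in C_s$ (by closure of $C_s$ below $\nu_s$ from Lemma~\ref{Bigl}(i)), and Corollary~\ref{3.5}(ii) gives $l^i_{s|l^*} = l^i_s$ for every $i$; thus $m_{s|l^*} = \omega$ with the sequence cofinal in $\nu_{s|l^*} = l^*$. Replacing $s$ with $s|l^*$, we may assume $\langle l^i\rangle$ is cofinal in $\nu_s$.

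The key final step is a hull-closure argument. Since $\tilde{h}_s$ is $\Sigma_1^{(n_s)}$-definable and the map $q \mapsto \Lambda(q, s)$ (and hence $q \mapsto \max \Lambda(q, s)$ when defined) is uniformly $\Sigma_1^{(n_s)}(M_s)$-definable with parameter $p(s)$ (via the hull description of $f_{(\gamma, q, s)}$ in Definition~\ref{basicf}), an induction starting from $l^0 = 0$ shows $l^i \in \tilde{h}_s(\omega \cup \{p(s)\}) = \mathrm{ran}(f^{*}_{(0, 0, s)})$ for every $i$. Cofinality of $\langle l^i\rangle$ in $\nu_s$ then forces $\lambda(f_{(0, 0, s)}) = \nu_s$. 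By monotonicity of the hulls $\tilde{h}_s(\gamma \cup \{p(s)\})$ in $\gamma$, we get $\lambda(f_{(\gamma, 0, s)}) = \nu_s$ for every $\gamma \leq \nu_s$; but then $\Lambda(0, s) = \emptyset$ and $l^1_s$ would be undefined---contradicting $m_s = \omega$.

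The main obstacle is carefully verifying the uniform $\Sigma_1^{(n_s)}(M_s)$-definability of $q \mapsto \max \Lambda(q, s)$ used in the final step, which hinges on Lemma~\ref{Edefinable} together with the $\Sigma_1^{(n_s)}$-definability of $\tilde{h}_s$, the hull construction, and the max operation on bounded subsets of $\nu_s$, as well as rigorously checking that the reduction to the cofinal case preserves $s \in S^+$ and the value of $n_s$ after passing from $s$ to $s|l^*$.
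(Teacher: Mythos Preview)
Your reduction to the case where the $l^i$ are cofinal in $\nu_s$ is fine in spirit, but the final hull-closure step contains a genuine gap. You assert that $q \mapsto \max \Lambda(q,s)$ is $\Sigma_1^{(n_s)}(M_s)$-definable, so that an induction places every $l^i$ inside $\operatorname{ran}(f^{\ast}_{(0,0,s)}) = \tilde h_s(\{p(s)\})$. But unpack what ``$\mu = \max\Lambda(q,s)$'' says: one must assert that for some $\gamma$ the hull $\tilde h_s(\gamma\cup\{q,p(s)\})$ has supremum in $\nu_s$ exactly $\mu$, \emph{and} that for all larger $\gamma$ the hull is cofinal in $\nu_s$. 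Already the clause ``every value below $\nu_s$ is below $\mu$'' is $\Pi_1^{(n_s)}$, and the maximality clause is worse; the whole predicate is not $\Sigma_1^{(n_s)}$. Lemma~\ref{Edefinable} does not rescue this: it gives $\Lambda(q,s|\nu)$ for $\nu<\nu_s$ as a $\Delta_1(J_s)$ object, not $\Lambda(q,s)$ itself. Concretely, $l^1=\max\Lambda(0,s)$ can strictly exceed $\lambda(f_{(0,0,s)})=\sup\bigl(\nu_s\cap\operatorname{ran}(f^{\ast}_{(0,0,s)})\bigr)$ whenever some $\gamma>0$ has $\lambda(f_{(0,0,s)})<\lambda(f_{(\gamma,0,s)})<\nu_s$; in that case $l^1$ lies outside $\operatorname{ran}(f^{\ast}_{(0,0,s)})$ and your induction already fails at the first step.

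The paper proceeds quite differently. Instead of trying to trap the $l^i$ in a fixed hull, it associates to each $l^i$ the ordinal
\[
\beta^i \;=\; \max\{\beta \mid \lambda(f_{(\beta,\,l^i,\,s)}) < \nu_s\}
\]
and proves $\beta^{i+1}<\beta^i$ directly, using Lemma~\ref{minimality}(iv) and Lemma~\ref{lambdapreserve}. This yields an infinite descending sequence of ordinals without any appeal to the definability of $i\mapsto l^i$.
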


\nod{\tmstrong{Proof}} Suppose this fails. Then for some $\eta \leq
\nu_s$ we have that $l^i_{\eta s}$ is defined for $i < \omega$. Let
$\lambda = \min (C_s^+ \backslash \eta)$. then $l^i_{\lambda s} =
l^i_{\eta s}$ by Lemma \ref{3.7}. Choose $f : \bar{s}
\Longrightarrow s$ with $\lambda = \lambda (f)$. Then $l^i_{\lambda
s} = l^i_{s| \lambda} = f (l^i_{\bar{s}})$ for $i < \omega$ by Cor.
\ref{3.5}{\tmem{(ii)}} \& Lemma \ref{3.3}(i). Taking $\lambda$ for
$\nu_s$, we assume, without loss of generality, that $l^i_s $ is
defined for $i < \omega$ for some $s \in S$.
%This will yield a contradiction.
We obtain an infinite descending chain of ordinals by showing that
as $i$ increases, and with it $l^i_s $, the maximal $\beta^i $ that
must be contained in the range of any $f : \Longrightarrow s$
together with $l^i_s$ in order for $\tmop{ran} (f)$ to be unbounded
in $s$ strictly {\tmem{decreases.}} This is absurd.

Set $\text{$l = l^i_s$} .$ Define: $\beta^i = \beta^i_s =_{\tmop{df}} \max
\{\beta | \lambda (f_{(\beta, l, s)}) < \nu_s \}.$ \ By the definition of
$l^{i + 1}_s$ we have that $\lambda (f_{(\beta, l, s)}) < \nu_s
\longleftrightarrow \lambda (f_{(\beta, l, s)}) \leq l^{i + 1}_s$.
Furthermore, by the definition of $\beta^i :$

(1) $\lambda (f_{(\beta^i, l, s)}) \leq l^{i + 1}_s$;

(2) $ \lambda (f_{(\beta^i + 1, l, s)}) = \nu_s$.

{\tmem{Claim}} $\beta^{i + 1} < \beta^i $ for $i < \omega$.

\nod{\tmstrong{Proof}} Set $f = f_{(\beta^{i + 1}, l^{i + 1}, s)} .$
Then $\lambda (f) = l^{i + 2}$,  dropping the subscript $\nu$. Let
$f : \bar{s} \Longrightarrow s$. Then $l^j_{\bar{s}}$ exists and $f
(l^{j_{}}_{\bar{s}}) = l^{j_{}}_{l^{i + 1}, s} = l^j_s $for $j \leq
i + 1$ since $l^j_{} < l^{i + 1} < \nu_s$ (with the first equality
from Lemma \ref{3.3}{\tmem{(i)}} and (1), the second from Lemma
\ref{3.7}).

(3) $\beta^i \geq \beta^{i + 1} .$

\nod{\tmstrong{Proof}} of (3): Suppose not, then $(\beta^i + 1) \cup
\{l^i \} \subseteq \tmop{ran} (f)$. Hence $\tmop{ran} (f_{(\beta^i +
1, l^i, s)}) \subseteq \tmop{ran} (f)$. hence by (2), $\lambda (f) =
\nu_s > l^{i + 2} .$ Contradiction!

(4) $\beta^i \neq \beta^{i + 1} .$

\nod{\tmstrong{Proof}} of (4): Suppose not. As $\beta^{i + 1}$ is
the first ordinal moved by $f$ we conclude that $f (\beta^i) >
\beta^i .$ Set $g =$ $f_{(\beta^i, l^{}, s)}, \bar{g} = f_{(\beta^i,
\bar{l}, \bar{s})}$ where $\bar{l} = l^i_{\bar{s}}$ . Then $g = f
\bar{g}$, since $f \upharpoonright \beta^i = \tmop{id}, f ( \bar{l})
= l (= l^i_{\bar{s}}) .$ Hence \ $l^{i + 1} = \lambda (g) = \lambda
(f \bar{g}) < l^{i + 2} = \lambda (f)$. Hence $\lambda ( \bar{g}) <
\nu_{\bar{s}}$. Now we set:  $g' = f_{(f (\beta^i), l, \nu)}$ and
$g_0 = f_{(\beta^i, l, s|l^{i + 2})}$. If further $f_0 = \tmop{red}
(f)$, then we have also $g_0 = f_0 \overline{g}$ by
\ref{minimality}{\tmem{(iv)}}. As $l^{i + 1} = \lambda (g) < l^{i +
2}$, Lemma \ref{lambdapreserve}{\tmem{(ii)}} applies and:

\ \ \ \ \ \ \ \ \ \ \ \ \ \ \ \ $f (\beta ( \bar{g})) = \text{$f_0 (\beta (
\bar{g})) = \beta (g_0) = \beta (g) = \beta^i .$}$

{\noindent}Hence $\beta^i \in \tmop{ran} (f)$ which is a
contradiction. This proves the {\tmem{Claim}} and hence the
Lemma.\mbox{}\hfill Q.E.D. \\

We now set $l_{\eta s} = l^{m - 1}_{\eta s}$, where $m = m_{\eta s}
.$ Again we write $l_s$ for $l_{\nu_s s}$. Notice that then $\Lambda
(l_{\eta s}, \nu_s) \cap \eta$ is either unbounded in $\eta$ or is
empty. We first analyze the latter case.

\begin{lemma}
  \label{4.1}Suppose $\Lambda (l_{\eta s}, s) \cap \eta = \varnothing$. Set $l
  = l_{\eta s} .$ Then:

  (i) $l = 0 \longrightarrow C_s \cap \eta = \varnothing,$

  (ii) \ $l > 0 \longrightarrow l = \max (C_s \cap \eta$),

  (iii) $\eta \in C^+_s \longrightarrow \eta = \lambda (f_{(0, l, s)}) .$
\end{lemma}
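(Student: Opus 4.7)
The plan is to exploit the fact that any $f:\bar{s}\Longrightarrow s$ has $p(s)\subseteq\tmop{ran}(f^*)$ by the very definition of $\Longrightarrow$, so $\tmop{ran}(f^*)$ always contains the canonical hull $\tmop{ran}(f_{(0,0,s)}^*)=\widetilde{h}_s(\{p(s)\})$, and consequently $\lambda(f)\geq\lambda(f_{(0,0,s)})$. More generally, whenever we can guarantee $l\in\tmop{ran}(f^*)$, the minimality clause of Lemma \ref{minimality}(i) gives $\tmop{ran}(f_{(0,l,s)}^*)\subseteq\tmop{ran}(f^*)$ and hence $\lambda(f)\geq\lambda(f_{(0,l,s)})$. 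Under the hypothesis $\Lambda(l,s)\cap\eta=\varnothing$ one has $\lambda(f_{(0,l,s)})\in\Lambda^+(l,s)$ but not in $\eta$, so either it lands $\geq\eta$ inside $\Lambda(l,s)$, or it equals $\nu_s$; either way $\lambda(f_{(0,l,s)})\geq\eta$.

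For \emph{(i)} with $l=0$, if $\mu=\lambda(f)\in C_s\cap\eta$, then trivially $0\in\tmop{ran}(f^*)$ and the monotonicity above forces $\mu\geq\lambda(f_{(0,0,s)})\geq\eta$, contradicting $\mu<\eta$. For \emph{(ii)} with $l>0$, note that $l=l^{m-1}_{\eta s}\in\eta\cap C_s$ by the Fact following the definition of $l^i_{\eta s}$. Suppose $\mu\in C_s\cap\eta$ with $l<\mu$ and write $\mu=\lambda(f)$. By the stabilization clause of that Fact, $l^i_{\mu s}=l^i_{\eta s}$ for all $i\leq m-1$, and $\Lambda(l,s)\cap\mu\subseteq\Lambda(l,s)\cap\eta=\varnothing$, hence $m_{\mu s}=m$ and $l_{\mu s}=l$. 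Since $\mu\in C_s$, Corollary \ref{3.5}(ii) gives $l=l^{m-1}_{s|\mu}$, and with $f_0=\tmop{red}(f):\bar{s}\Longrightarrow s|\mu$ (cofinal by Lemma \ref{initseg}), Corollary \ref{3.5}(i) yields $l=f_0(l^{m-1}_{\bar{s}})=f(l^{m-1}_{\bar{s}})\in\tmop{ran}(f)$. The monotonicity of the first paragraph now delivers $\mu\geq\lambda(f_{(0,l,s)})\geq\eta$, contradicting $\mu<\eta$.

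Part \emph{(iii)} splits on whether $\eta=\nu_s$ or $\eta\in C_s$. In the first case, $\Lambda(l,s)=\varnothing$ forces $\Lambda^+(l,s)\subseteq\{\nu_s\}$, so $\lambda(f_{(0,l,s)})=\nu_s=\eta$. In the second, pick $g:\bar{t}\Longrightarrow s$ with $\lambda(g)=\eta$; the argument just used for (ii) shows $l\in\tmop{ran}(g^*)$, whence Lemma \ref{minimality}(i) gives $\tmop{ran}(f_{(0,l,s)}^*)\subseteq\tmop{ran}(g^*)$ and so $\lambda(f_{(0,l,s)})\leq\lambda(g)=\eta$; combined with $\lambda(f_{(0,l,s)})\geq\eta$, equality follows. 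The main obstacle across (ii) and (iii) is precisely the verification of $l\in\tmop{ran}(f)$ for arbitrary $f$ realizing the relevant $\mu$ (or $\eta$); the route through the stabilization Fact plus both clauses of Corollary \ref{3.5} should handle this cleanly, provided one first checks that $\mu\in C_s$ (respectively $\eta\in C_s$) indeed yields $l_{\mu s}=l_{\eta s}$ before invoking those corollaries.
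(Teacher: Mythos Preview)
Your proof is correct, and the overall strategy is sound; it differs from the paper's in organization rather than in essential content.

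The paper introduces the auxiliary ordinal $\rho = \min(C^+_s \setminus (l+1))$, shows $l = l_{\rho s}$ via Lemma~\ref{3.7}, and then proves the exact equality $\lambda(f_{(0,l,s)}) = \rho$ by picking $f : \bar{s} \Longrightarrow s$ with $\lambda(f) = \rho$, invoking Lemma~\ref{3.3}(i) to get $l \in \tmop{ran}(f)$, and using Lemma~\ref{lambdapreserve} to rule out $\lambda(f_{(0,\bar{l},\bar{s})}) < \nu_{\bar{s}}$. From $\rho \geq \eta$ all three parts drop out simultaneously. Your approach bypasses $\rho$ and Lemma~\ref{lambdapreserve} entirely: you read off $\lambda(f_{(0,l,s)}) \geq \eta$ straight from the hypothesis $\Lambda(l,s) \cap \eta = \varnothing$, and then for each candidate $\mu \in C_s$ (or $\eta$ itself in (iii)) you verify $l \in \tmop{ran}(f^*)$ and appeal to the minimality clause of Lemma~\ref{minimality}(i) to obtain only the inequality $\lambda(f) \geq \lambda(f_{(0,l,s)})$ that you actually need. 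Both routes pivot on the same identification $l = f(l^{m-1}_{\bar{s}})$ via Lemma~\ref{3.3}(i) (your detour through both halves of Corollary~\ref{3.5} amounts to the same thing, and in fact Lemma~\ref{3.3}(i) alone would suffice). The paper's argument yields the sharper identity $\lambda(f_{(0,l,s)}) = \min(C^+_s \setminus (l+1))$ as a by-product, which is reused in Lemma~\ref{4}; your argument is leaner for the lemma as stated but does not record this identity explicitly.
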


\nod{\tmstrong{Proof}} \ \ Set $\rho = \min (C^+_s \backslash (l +
1)$.

(1) $l = l_{\rho s} .$

\nod{\bf Proof:} Set $n = m_{\eta s} - 1.$ Then $l = l^n_{\eta s} <
l + 1 < \eta$. Hence (by Fact after \ref{lfact}) $l = l^n_{l + 1,
s}$. But $\Lambda (l, s) \cap (l + 1) = \varnothing .$ Hence $l^{n +
1}_{l + 1, s}$ is undefined and $l = l_{l + 1, s}$. Hence $l =
l_{\rho, s} $ by Lemma \ref{3.7}. \ \mbox{}\hfill
Q.E.D.(1)\\

(2) $\lambda (f_{(0, l, s)}) = \rho$.

\nod{\bf Proof:} Choose $f : \bar{s} \Longrightarrow s$, with
$\lambda (f) = \rho$ witnessing that $\rho \in C_s .$ Then, by Lemma
\ref{3.3}(i), $f (l_{\bar{s}}) = l_{\rho s} = l$. Set $\bar{l} =
l_{\bar{s}}$. Now note that we must have that $\lambda (f_{(0,
\bar{l}, \bar{s})}) = \bar{s}$. For, if this failed then $f (\lambda
(f_{(0, \bar{l}, \bar{s})})) = \lambda (f_{(0, l, s)}) < \rho$ by
Lemma \ref{lambdapreserve} \ and so the latter is in $C^+_s \cap (l,
\rho)$, which is absurd! Then \ $\lambda (f_{(0, l, s)}) = \lambda
(f f_{(0, \bar{l}, \bar{s})}) = \lambda (f) = \rho$.  \mbox{}\hfill Q.E.D.(2)\\

From (2) and the definition of $l$ as $l_{\eta s})$ it follows that
$\rho \geq \eta$ . There are thus three alternatives:

If $l = 0$ then {\tmem{(i)}} holds: $\rho = \min (C^+_s \backslash
1) = \min (C^+_s) \geq \eta$. If $l > 0$ then $l = \max (C_s \cap
\eta)$ since $(C_s \cap \eta) \backslash (l + 1) \subseteq$ $(C_s
\cap \rho) \backslash (l + 1) = \varnothing$ and thus we have
{\tmem{(ii)}}; finally for {\tmem{(iii)}} if $\eta \in C^+_s
\longrightarrow \eta = \max (C^+_s \backslash (l + 1) = \rho =
\lambda (f_{(0, l, s)}) .$ \mbox{}\hfill
Q.E.D. \\

We now get a characterisation of the closed sets $C_s^+$.

\begin{lemma}
  \label{4}Let $\lambda$ be an element or a limit point of $C^+_s$. Let $l =
  l_{\lambda s}$. Then there is $\beta$ such that $\lambda = \lambda
  (f_{(\beta, l, s)})$. Hence $C_s$ is closed in $\nu_s$, and $C^+_s =\{
  \lambda (f_{(\beta, l, s)}) \mid \beta \leq \nu_s, l < \nu_s \}.$
\end{lemma}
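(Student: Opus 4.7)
The plan is to exploit the dichotomy forced by the definition of $l=l_{\lambda s}=l^{m-1}_{\lambda s}$: since the finite sequence terminates at stage $m-1$, the set $\Lambda(l,s)\cap \lambda$ is either empty or unbounded in $\lambda$ (as $\Lambda(l,s)$ is closed in $\nu_s$ by Lemma \ref{Bigl}(i), undefinedness of $l^m_{\lambda s}$ rules out a strict maximum below $\lambda$). I handle the two alternatives separately, folding the ``element'' and ``limit point'' cases into each.

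Consider first the \emph{unbounded} alternative. Then $\lambda$ is a limit point of $\Lambda(l,s)$; by the closure portion of Lemma \ref{Bigl}(i), the proof of that lemma shows such limit points lie in $\Lambda^+(l,s)$, so by definition $\lambda=\lambda(f_{(\beta,l,s)})$ for some $\beta\leq \nu_s$, as desired.

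Now suppose $\Lambda(l,s)\cap \lambda=\varnothing$. If $\lambda$ is actually an element of $C^+_s$, then Lemma \ref{4.1}(iii) applies directly and yields $\lambda=\lambda(f_{(0,l,s)})$, so $\beta=0$ works. If instead $\lambda$ is only a limit point of $C^+_s$, I claim this configuration is impossible. For any $\mu\in C^+_s\cap (l,\lambda)$, the stability fact after Definition \ref{lfact} gives $l^i_{\mu s}=l^i_{\lambda s}$ for every $i$ with $l^i_{\lambda s}<\mu$; and $\Lambda(l,s)\cap \mu\subseteq \Lambda(l,s)\cap \lambda=\varnothing$ forces $m_{\mu s}=m_{\lambda s}$ and hence $l_{\mu s}=l$. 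Applying Lemma \ref{4.1}(iii) to each such $\mu$ yields $\mu=\lambda(f_{(0,l,s)})$, a value independent of $\mu$. Thus $C^+_s\cap (l,\lambda)$ is a singleton at most, contradicting that $\lambda$ is a limit from above $l$ of points of $C^+_s$. Hence this subcase degenerates, and $\lambda$ must in fact lie in $C^+_s$ and be handled by the preceding paragraph.

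Closure and the characterization now fall out: every limit point of $C^+_s$ belongs to $C^+_s$ (being of the form $\lambda(f_{(\beta,l,s)})$), so $C_s$ is closed in $\nu_s$; the inclusion $C^+_s\subseteq \{\lambda(f_{(\beta,l,s)})\mid \beta\leq \nu_s,\ l<\nu_s\}$ is what we proved, and the reverse inclusion is immediate from the definition of $C^+_s$. The main delicate point is the stability argument in the empty subcase for would-be ``pure'' limit points: one must verify that the finite witnessing sequences $l^i_{\mu s}$ literally coincide with $l^i_{\lambda s}$ for $\mu$ close enough to $\lambda$, which is exactly what the Fact after \ref{lfact} together with the hypothesis $\Lambda(l,s)\cap \lambda=\varnothing$ delivers.
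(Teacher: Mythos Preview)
Your proof is correct and follows the same overall architecture as the paper: the identical dichotomy on whether $\Lambda(l,s)\cap\lambda$ is empty or unbounded in $\lambda$, with the unbounded case handled via closure of $\Lambda(l,s)$ (Lemma \ref{Bigl}(i)) and the empty case via Lemma \ref{4.1}.

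The one noteworthy difference is in the empty subcase for a putative pure limit point. You argue indirectly: for each $\mu\in C^+_s\cap(l,\lambda)$ you use the stability Fact to pin down $l_{\mu s}=l$, then apply Lemma \ref{4.1}(iii) to force $\mu=\lambda(f_{(0,l,s)})$, collapsing the interval to a singleton. The paper instead applies Lemma \ref{4.1}(i),(ii) directly with $\eta=\lambda$: these give $C_s\cap\lambda=\varnothing$ (if $l=0$) or $l=\max(C_s\cap\lambda)$ (if $l>0$), either of which immediately rules out $\lambda$ being a limit point. Your route is sound but does more work than needed; the paper's one-line appeal to parts (i) and (ii) of Lemma \ref{4.1} is the cleaner way to dispose of this subcase.
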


\nod{\tmstrong{Proof}} {\tmem{Case 1}} $\lambda \cap \Lambda (l, s)
= \varnothing$

Then $C_s \cap \lambda = \varnothing$ or $l = \max (C_s \cap \lambda)$ by the
last lemma. Hence $\lambda$ is not a limit point of $C^+_s$. Hence $\lambda
\in C^+_s$, and thus $\lambda = \lambda (f_{(0, l, s)})$ by{\tmem{ (iii)}} of
that lemma.

{\tmem{Case 2}} \ $\lambda \cap \Lambda (l, s)$ is unbounded in $\lambda$.

Given $\mu \in \Lambda (l, s) \cap \lambda$, let $\beta_{\mu}$ be
such that $\lambda (f_{(\beta_{\mu}, l, s)}) = \mu$. Then $\lambda
(f_{(\beta, l, s)}) = \lambda$ where $\beta = \sup_{\mu} \beta_{\mu}
.$\\  The last sentence is immediate from the
previous one. \mbox{}\hfill Q.E.D. \\

We remark that we have just shown that the first conjunct of (i) of Theorem
\ref{ksquare} holds. We move towards proving the other clauses. The following
is (iii).

\begin{lemma}
  $\lambda \in C_s \longrightarrow \lambda \cap C_s = C_{s| \lambda} .$
\end{lemma}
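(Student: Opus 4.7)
I would establish both inclusions using the characterization $C_s^+ = \{\lambda(f_{(\beta,l,s)}) \mid \beta\leq\nu_s, l<\nu_s\}$ from Lemma \ref{4}, together with Lemmas \ref{initseg}, \ref{minimality}, \ref{lambdapreserve} and the transfer property Lemma \ref{Bigl}(iii). To start, since $\lambda\in C_s$ there is $f:\bar s\Longrightarrow s$ with $\lambda(f)=\lambda$; applying Lemma \ref{initseg} yields $s|\lambda\in S^+$, a cofinal $f_0 = \tmop{red}(f):\bar s\Longrightarrow s|\lambda$, and the canonical factor map $f':M_{s|\lambda}\to M_s$ which is $\Sigma^{(n)}_0$-preserving with $f'\upharpoonright\lambda=\tmop{id}$ and $f'$ sending $\langle\alpha_{s|\lambda},p_{s|\lambda},d_{s|\lambda}\rangle$ to $\langle\alpha_s,p_s,d_s\rangle$ (via Lemma \ref{4.3}).

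\noindent\emph{Forward inclusion} $\lambda\cap C_s\subseteq C_{s|\lambda}$. Fix $\mu\in C_s$ with $\mu<\lambda$ and set $l=l_{\mu s}$. By Lemma \ref{4}, $\mu=\lambda(f_{(\beta,l,s)})$ for some $\beta$. First I would show $l_{\mu s}=l_{\mu(s|\lambda)}$: this is an induction on the level $i$ of the $l^i$-construction, using Lemma \ref{Bigl}(iii) (which gives $\Lambda(q,s|\lambda)=\lambda\cap\Lambda(q,s)$ whenever $\lambda\in\Lambda(q,s)$) together with the Fact following \ref{lfact}. With $l$ common to both sides, I would then argue $\mu=\lambda(f_{(\beta,l,s|\lambda)})$: composing $f_{(\beta,l,s|\lambda)}^{\ast}$ with $f'$ produces a map $\bar t\Longrightarrow s$ (because $f'$ preserves the relevant parameters and is identity below $\lambda$, hence below $\mu$) whose parameters $\beta,l$ agree with those of $f_{(\beta,l,s)}$; the minimality in Lemma \ref{minimality}(i) identifies this composition with $f_{(\beta,l,s)}$ itself, whence $\mu=\lambda(f_{(\beta,l,s|\lambda)})$. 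Since $\mu<\lambda=\nu_{s|\lambda}$, this places $\mu\in C_{s|\lambda}$.

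\noindent\emph{Reverse inclusion} $C_{s|\lambda}\subseteq\lambda\cap C_s$. Given $\mu\in C_{s|\lambda}$, Lemma \ref{4} gives $h:\bar t\Longrightarrow s|\lambda$ with $\lambda(h)=\mu$. Consider $f'\circ h^{\ast}:M_{\bar t}\to M_s$. Since $f'$ is $\Sigma_1^{(n)}$-preserving and carries the parameters $\alpha_{s|\lambda},p_{s|\lambda},d_{s|\lambda}$ to their $s$-counterparts, while $h^{\ast}$ has the corresponding preservation for $s|\lambda$, the composition lies in $\mathbb F$; that is, $f'\circ h:\bar t\Longrightarrow s$. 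Because $f'\upharpoonright\lambda=\tmop{id}$ and $\mu<\lambda$, $\lambda(f'\circ h)=f'(\mu)=\mu<\nu_s$, so $\mu\in C_s\cap\lambda$.

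\noindent\emph{Main obstacle.} The only substantive technical point is verifying that the recursively defined $l$-sequence $\langle l^i_{\mu s}\rangle$ coincides with $\langle l^i_{\mu(s|\lambda)}\rangle$; the difficulty is that each step depends on a $\Lambda(l^{i-1},\cdot)$ whose own agreement between $s$ and $s|\lambda$ has to be reestablished at every level. However this is already taken care of by Lemma \ref{Bigl}(iii) combined with Corollary \ref{3.5}(ii), so the verification reduces to a straightforward induction. Once this identification is in place, both inclusions drop out cleanly from the minimality of the $f_{(\beta,l,\cdot)}$-witnesses and the preservation properties of the factor map $f'$ supplied by Lemma \ref{initseg}.
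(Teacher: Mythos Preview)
Your approach is genuinely different from the paper's, which proceeds by a double induction (on $\nu_s$ and then on $\lambda$), using the single parameter $l=l_{\lambda s}$ (not $l_{\mu s}$), the fact that $\lambda\in\Lambda(l,s)$ from Lemma~\ref{4}, and then a case split on whether $\Lambda(l,s|\lambda)$ is empty or unbounded in $\lambda$, invoking the inductive hypotheses and Lemma~\ref{4.1}. Your direct map-composition strategy is natural, but as written it has two gaps.

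\textbf{Reverse inclusion.} You assert that $f'$ is $\Sigma_1^{(n)}$-preserving, but the Interpolation Lemma (Lemma~\ref{interpolation}) only gives $f':\widetilde M\longrightarrow_{\Sigma_0^{(k)}} M$; this is exactly how $f'$ arises in Lemma~\ref{initseg}. Consequently the composition $f'\circ h^{\ast}$ is only $\Sigma_0^{(n)}$-preserving, and Definition~\ref{basicf} requires the extension map to be $\Sigma_1^{(n)}$-preserving for membership in $\mathbb F$. So you have not produced a witness that $\mu\in C_s$. (Upward $\Sigma_1^{(n)}$-preservation of the composite is fine; it is the downward direction that fails, since a $\Sigma_1^{(n)}$ witness in $M_s$ need not pull back through the merely $\Sigma_0^{(n)}$ map $f'$.)

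\textbf{Forward inclusion.} Your inductive step for $l^i_{\mu s}=l^i_{\mu(s|\lambda)}$ needs $\mu\cap\Lambda(l^{i-1},s)=\mu\cap\Lambda(l^{i-1},s|\lambda)$. You invoke Lemma~\ref{Bigl}(iii), but its hypothesis is $\lambda\in\Lambda(q,s)$ for the specific $q=l^{i-1}_{\mu s}$; you only know $\lambda\in C_s$, which is a union of various $\Lambda$-sets and need not contain this one. Corollary~\ref{3.5}(ii) does not help here either: it concerns $l^i_{\lambda s}$, not $l^i_{\mu s}$ for $\mu<\lambda$. The paper sidesteps exactly this difficulty by working with $l=l_{\lambda s}$, for which $\lambda\in\Lambda(l,s)$ holds by Lemma~\ref{4}, and then using the inductive hypotheses to reduce to smaller $\mu\in\Lambda(l,s)\cap\lambda$ (Case~2) or to a single maximal point (Case~1).
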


\nod{\tmstrong{Proof}} \ Assume inductively the result proven for
all $\nu'$ with $\nu' < \nu_s$ and $s| \nu' \in S$, (that is, the
lemma is proven with $s| \nu'$ replacing $s$) and we prove the lemma
for $\nu_s$ by induction on $\lambda$. Let $l = l_{\lambda s} .$
Hence by Cor.\ref{3.5} $l = l_{s| \lambda} .$ By Lemma \ref{4}
$\lambda \in \Lambda (l, s)$. Set $\Lambda = \lambda \cap \Lambda
(l, s)$. Then by Lemma \ref{Bigl}{\tmem{(ii)}} $\Lambda = \Lambda
(l, s| \lambda)$.

{\tmem{Case 1 $\Lambda = \varnothing$}}.

If $l = 0$, then $C_{s| \lambda} \subseteq \lambda \cap C_s = \varnothing$
(the latter by Lemma \ref{4.1}). If $l > 0$, then $l = l_{s| \lambda} = \max
(C_{s| \lambda} \cap \lambda) = \max (C_{s| \lambda}) = l_{\lambda s} = \max
(\lambda \cap C_s)$ by the same lemma. As $l < \lambda$, we use the inductive
hypothesis on $\lambda$: $l \cap C_s = C_{s|l} = l \cap C_{s| \lambda}$ where
the second equality is the inductive hypothesis taking $\lambda = \nu' <
\nu_s$. \ Hence $C_{s| \lambda} = \lambda \cap C_s = C_{s|l} \cup \{l\}$.

{\tmem{Case 2}} $\Lambda$ is unbounded in $\lambda$.

Then $\mu \in \Lambda \longrightarrow \mu \in C_s \cap C_{s|
\lambda} .$ Hence by the overall inductive hypothesis $C_{s| \mu} =
\mu \cap C_{s| \lambda}$ and (as $\mu < \lambda$) $C_{s| \mu} = \mu
\cap C_s$. Hence $C_{s| \lambda} = \lambda \cap C_s = \bigcup_{\mu
\in \Lambda} C_{s| \mu} .$ \ \ \ \ \mbox{}\hfill Q.E.D. \\

Now (i) of the Theorem follows easily:

\begin{lemma}
  $\sup (C_s) < \nu_s \longrightarrow \tmop{cf} ( \nu_s) = \omega .$
\end{lemma}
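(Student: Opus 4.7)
The plan is to prove the contrapositive in the strong form: if $\sup(C_s) < \nu_s$, then in fact the $\Sigma_1^{(n_s)}(M_s)$-hull of any finite set of parameters below $\nu_s$ already witnesses countable cofinality. The key leverage comes from the characterization $C_s^+ = \{\lambda(f_{(\beta,l,s)}) \mid \beta \leq \nu_s, l < \nu_s\}$ given in Lemma \ref{4}.

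First I would let $\theta = \sup(C_s)$, so by assumption $\theta < \nu_s$, and (using that $\nu_s$ is a limit ordinal) pick some $l$ with $\theta < l < \nu_s$. Next I would form the map $f = f_{(0,l,s)}$, i.e. the inverse of the transitive collapse of the hull $\widetilde{h}_s(\{l\} \cup p(s))$ in $M_s$; by Lemma \ref{4.3} and the remark immediately after Definition \ref{basicf} this yields $\bar s \in S^+$ with $f : \bar s \Longrightarrow s$. Since $l \in \operatorname{ran}(f^\ast)$ by construction, we have $\lambda(f) \geq l > \theta$.

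Now the main step: suppose for contradiction that $\lambda(f) < \nu_s$. Then by Lemma \ref{4} we have $\lambda(f) \in C_s^+ \setminus \{\nu_s\} = C_s$. But $\lambda(f) > \theta = \sup(C_s)$, a contradiction. Therefore $\lambda(f) = \nu_s$, which means the set $\widetilde{h}_s(\{l\} \cup p(s)) \cap \nu_s$ is unbounded in $\nu_s$.

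Finally I would conclude by counting: the set $\widetilde{h}_s(\{l\} \cup p(s))$ consists of values $\widetilde{h}_s(i,x)$ for $i < \omega$ and $x$ ranging over finite tuples drawn from the \emph{finite} parameter set $\{l\} \cup p(s)$, so it is countable. Hence an $\omega$-sequence cofinalizes $\nu_s$, and since $\nu_s$ is a limit ordinal we conclude $\operatorname{cf}(\nu_s) = \omega$. There is no real obstacle here beyond invoking the characterization in Lemma \ref{4}; the argument is essentially a one-line application once the machinery of $f_{(\beta,l,s)}$ and $C_s^+$ is in place.
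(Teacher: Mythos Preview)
Your argument is correct and essentially the same as the paper's: both form the map $f_{(0,l,s)}$ for a suitable $l$, observe its range is countable, and show it is cofinal in $\nu_s$. The only difference is cosmetic: the paper takes $l = l_s$ and notes (via Lemma~\ref{4.1} with $\eta = \nu_s$) that $\sup(C_s) = l_s$ and $\lambda(f_{(0,l_s,s)}) = \nu_s$, whereas you pick any $l > \sup(C_s)$ and argue that $\lambda(f) \in C_s^+$ forces $\lambda(f) = \nu_s$. One small remark: your appeal to Lemma~\ref{4} is unnecessary, since $\lambda(f_{(0,l,s)}) \in C_s^+$ is immediate from the \emph{definition} of $C_s^+$; Lemma~\ref{4} establishes the opposite inclusion.
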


{\noindent}{\tmstrong{Proof}} Let $l = \sup (C_s) = l_s$. Then $
\tmop{ran} (f_{(0, l, \nu_s)})$ is countable, and cofinal in $\nu_s
.$\\ \mbox{}\hfill Q.E.D.

\begin{lemma}
  Let $f : \bar{s} \Longrightarrow s$. Then $|f| : \langle J_{\bar{s}},
  C_{\bar{s}} \rangle \longrightarrow_{\Sigma_0} \langle J_s, C_s \rangle$.
\end{lemma}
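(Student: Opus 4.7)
\medskip

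\noindent\textbf{Proof plan.}
The map $|f|:J_{\bar s}\longrightarrow J_s$ is already $\Sigma_1$-preserving (and in particular $\Sigma_0$-preserving) as a map of unexpanded structures, by Definition~\ref{basicf}(i); so to upgrade to $\Sigma_0$-preservation for the structures with the added unary predicate $C$, it suffices to verify the single atomic equivalence
\[
\bar x\in C_{\bar s}\ \Longleftrightarrow\ |f|(\bar x)\in C_s\qquad(\bar x\in J_{\bar s}),
\]
since the clauses for bounded quantifiers then follow mechanically from $\Sigma_0$-preservation of $\in$ together with the fact that $|f|(x)\cap J_s=|f|(x\cap J_{\bar s})$.

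For the forward implication, let $\bar x\in C_{\bar s}$ and set $\bar l=l_{\bar x\,\bar s}$. By Lemma \ref{4}, $\bar x\in\Lambda(\bar l,\bar s)$, so there is $\bar\beta\leq\bar\nu$ with $\bar x=\lambda(f_{(\bar\beta,\bar l,\bar s)})$. Put $\beta=f(\bar\beta)$ and $l=f(\bar l)$. Applying Lemma \ref{lambdapreserve}, part (i) (to the parameters $\bar\gamma=\bar\beta$, $\bar q=\bar l$) converts $\bar x<\bar\nu$ into $\lambda(f_{(\beta,l,s)})<\nu_s$, and part (ii) then gives $f(\bar x)=\lambda(f_{(\beta,l,s)})$. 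By Lemma \ref{4} this value lies in $C_s^+$, and since it is $<\nu_s$ it lies in $C_s$.

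For the reverse implication, suppose $|f|(\bar x)\in C_s$; then $\bar x$ must itself be an ordinal $<\bar\nu$, and we write $\lambda=f(\bar x)\in C_s$. Lemma \ref{3.3}(ii), applied with $\bar\eta=\bar x$ and $\eta=\lambda$, shows that the finite sequences $(l^i_{\bar x\,\bar s})_i$ and $(l^i_{\lambda s})_i$ have the same length and correspond term by term under $f$; in particular, setting $\bar l=l_{\bar x\,\bar s}$ one has $l:=f(\bar l)=l_{\lambda s}$. Since $\lambda\in C_s$ and $l=l_{\lambda s}$, Lemma \ref{4} together with the minimality property of Lemma \ref{minimality}(i)--(ii) identifies $\lambda(f_{(\lambda,l,s)})$ as the least member of $\Lambda(l,s)^+$ that is $\geq\lambda$, namely $\lambda$ itself. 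Now invoke Lemma \ref{lambdapreserve} with $\bar\gamma=\bar x$, $\gamma=\lambda$, $\bar q=\bar l$: part (i), in the $(\leftarrow)$ direction, yields $\lambda(f_{(\bar x,\bar l,\bar s)})<\bar\nu$, and part (ii) then gives $f\bigl(\lambda(f_{(\bar x,\bar l,\bar s)})\bigr)=\lambda=f(\bar x)$. Injectivity of $f$ on ordinals forces $\lambda(f_{(\bar x,\bar l,\bar s)})=\bar x$, so $\bar x\in\Lambda(\bar l,\bar s)\subseteq C^+_{\bar s}$; as $\bar x<\bar\nu$, in fact $\bar x\in C_{\bar s}$.

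The only non-routine point is the equality $\lambda(f_{(\lambda,l,s)})=\lambda$ used in the reverse direction, which is where the characterisation of $C_s^+$ as the image of $\Lambda(l_{\lambda s},s)$-values under the $f_{(\beta,l,s)}$ parametrisation (Lemmas \ref{4} and \ref{minimality}) is essential; everything else is a direct transfer via the two preservation lemmas \ref{3.3} and \ref{lambdapreserve}, both of which have already been set up for exactly this kind of coherence argument. Note that no case split according to cofinality of $f$ is needed here: unlike the $\Sigma_1$-preservation of the $\Lambda(q,s)$-predicate proved earlier, we are only claiming $\Sigma_0$-preservation of $C$, which is symmetric in the two directions and survives both cofinal and non-cofinal maps.
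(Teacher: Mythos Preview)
There are two genuine gaps.

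\textbf{First, the reduction to the atomic equivalence is insufficient.} To get $\Sigma_0$-preservation for the expanded structures $\langle J_{\bar s},C_{\bar s}\rangle\to\langle J_s,C_s\rangle$ you need, for arbitrarily large $\bar\tau<\nu_{\bar s}$, the \emph{set} equality $|f|(C_{\bar s}\cap\bar\tau)=C_s\cap |f|(\bar\tau)$ (this is what makes bounded quantifiers go through once $C_{\bar s}\cap\bar\tau\in J_{\bar s}$). Your atomic equivalence $\bar x\in C_{\bar s}\Leftrightarrow f(\bar x)\in C_s$ gives only the inclusion $f``(C_{\bar s}\cap\bar\tau)\subseteq C_s\cap f(\bar\tau)$; for the reverse you must show that every $y\in C_s$ with $y<f(\bar\tau)$ actually lies in $\operatorname{ran}(f)$. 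Nothing in your argument establishes this. The paper secures it by a completely different mechanism: using Lemma~\ref{Edefinable}, the set $C_{\bar s|\bar\lambda}$ (for suitable $\bar\lambda$) is uniformly $\Sigma_0$-definable from the parameter $E_{\bar s|\bar\lambda}\in J_{\bar s}$, so $\Sigma_1$-elementarity of $|f|$ sends it to $C_{s|\lambda}$ \emph{as a set}; coherence then identifies these with $C_{\bar s}\cap\bar\lambda$ and $C_s\cap\lambda$. That definability-plus-coherence step is the real content, and it cannot be replaced by a pointwise check.

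\textbf{Second, your invocations of Lemma~\ref{lambdapreserve} ignore its hypothesis.} That lemma is stated only for cofinal $f$, i.e.\ under the assumption $\lambda(f)=\nu_s$; both directions of part~(i) use this (the proof appeals to $\rho(f)=\rho_s$, which comes from Lemma~\ref{suprema} via cofinality). Your final paragraph explicitly claims no cofinal/non-cofinal split is needed, but without cofinality you cannot apply \ref{lambdapreserve} at all. The paper does make a case split (on whether $\Lambda(\bar l,\bar s)$ is unbounded or empty), and in the non-cofinal situation passes through $\operatorname{red}(f)$ and the coherence $C_{s|\mu}=\mu\cap C_s$.

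A secondary worry: in your reverse direction the identity $\lambda(f_{(\lambda,l,s)})=\lambda$ is asserted via ``minimality'', but this is not what Lemma~\ref{minimality} says. For $\lambda\in C_s$ one always has $\lambda>\alpha_s$, while $\beta(g)\le\alpha_s$ for every $g$; so enlarging the generator set from some $\beta\le\alpha_s$ to all of $\lambda$ can genuinely enlarge the hull's supremum beyond $\lambda$ (indeed once $\lambda$ exceeds $\gamma_s$ the hull is cofinal in $\nu_s$). So even modulo the first two issues, this step would need a different justification.
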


{\noindent{\bf{Proof: }}It suffices to show that for arbitrarily
large $\tau < \nu_{\bar{s}} $ that $|f| (C_{\bar{s}} \cap \tau) =
C_s \cap |f| (\tau)$. As usual we continue to write ``$f$'' for
``$|f|$''. \ Set $l_{\bar{s}} = \bar{l}$.

{\tmem{Case 1}} $\Lambda ( \bar{l}, \nu_{\bar{s}})$ is unbounded in
$C_{\bar{s}}$.

If $\bar{\lambda} \in C_{\bar{s}} $ and $\lambda = f (
\bar{\lambda})$ then by \ref{2.2} (and \ref{Bigl}) $\lambda \in
\Lambda (f ( \bar{l}), s) \subseteq C_s .$ By Lemma \ref{Edefinable}
we have $E_{\bar{s} | \bar{\lambda}} \in J_{\bar{s}}$ and $f (
E_{\bar{s} | \overline{\lambda}}) = E_{s| \lambda}$. By Lemma
\ref{4.1}{\tmem{}} $C^{}_{\bar{s} | \bar{\lambda}} =\{\lambda
(f_{(0, l, \bar{s})}) < \bar{\lambda} | l < \bar{\lambda} \} \in
J_{\bar{s}}$ and is uniformly $\Sigma_0$ from $E_{\bar{s} |
\bar{\lambda}} $over $J_{\bar{s}}$. Consequently $|f| (C_{\bar{s} |
\overline{\lambda}}) = C_{s| \lambda},$ by $\Sigma_1$-elementarity
of $|f|$. But $C_{\bar{s} | \overline{\lambda}} = \overline{\lambda}
\cap C_{\bar{s} | \bar{\nu}}$, $C_{s| \lambda} = \lambda \cap C_s .$

{\tmem{Case 2}} $\Lambda ( \bar{l}, \bar{\nu}) = \varnothing$.

Let $f ( \bar{l}) = l$. Then $l = l_{\lambda \nu}$ where $\lambda =
\lambda (f)$. However \ $\lambda (f_{0, \bar{l}, \bar{s}}) =
\nu_{\bar{s}}$ by our case hypothesis. \ Thus $\lambda (f_{(0, l,
s)}) = \lambda (f f_{(0, \bar{l}, \bar{s})}) = \lambda$. Hence
$\Lambda (l, \nu) \cap \lambda = \varnothing$.  By Lemma \ref{4.1}
we are reduced to the following two subcases:

{\tmem{Case 2.1}} $\bar{l} = l = 0$. Then, $C_{\bar{s}} = C_s \cap
\lambda = \varnothing$, and so the result is trivial.

{\tmem{Case 2.2}} $\bar{l} = \max C_{\bar{s}} .$ Then $l > 0$ and
thus $l = \max (C_s \cap \lambda)$. Hence for sufficiently large
$\bar{\tau} > \bar{l}$ \ $f ( \bar{\tau} \cap C_{\bar{s}}) = f
(C_{\bar{s}}) = f (C_{\bar{s}} \cap \bar{l} \cup \{ \bar{l} \}) =
(C_s \cap l) \cup \{l\}= C_s \cap \lambda = f ( \bar{\tau}) \cap C_s
.$ \ \ \ \ \mbox{}\hfill Q.E.D.

We now proceed towards calculating the order types of the
$C_s$-sequences. This is done (in a somewhat speedy manner) in
{\cite{BJW}}, but the following comes from {\cite{Je1?}}. We first
generalise the definition of $\beta^i$.

\begin{definition}
  For $\eta \leq \nu_s \tmop{set} : \beta^i_{\eta s} \simeq_{} \max
  \{\beta | \lambda (f_{(\beta, l^i_{\eta s}, s)}) < \eta\}$.
\end{definition}

In very close analogy to the $\beta^i=\beta^i_s$ we have parallel
 properties for the
%of the $\beta^i = \beta^i_s$ carry over to these
$\beta^{i_{}}_{\eta s}$:
\begin{enumeratenumeric}
  \item $\lambda (f_{(\beta, l^i_{\eta s}, s)}) < \nu_s \longleftrightarrow
  \lambda (f_{(\beta, l^i_{\eta s}, s)}) \leq l^{i + 1}_{\eta s}$.

  \item $\beta^i_{\eta s}$ is defined if and only if $l^{i + 1}_{\eta s}$ is
  defined - \tmtextit{i.e.}{\hspace{0.25em}} $i + 1 < m_{\eta s}$.

  \item  $\beta^i_{\eta s} \simeq \beta^i_{\lambda s}$ if $\lambda = \min
  (C_s^+ \backslash \eta)$.  $\lambda (f_{(\beta, l^i_{\eta
  s}, s)}) < \eta \longleftrightarrow \lambda (f_{(\beta, l^i_{\eta s}, s)}) <
  \lambda .$

  \item $\beta^{i + 1}_{\eta s} < \beta^i_{\eta s}$ when defined. (By the same
  argument as for $\beta^{i + 1} < \beta^i .$)

\end{enumeratenumeric}
Now we set $b_{\eta} = b_{\eta s} =_{\tmop{df}} \{ \beta^i_{\eta s}
| i + 1 < m_{\eta s} \}$. For $\eta \in C_s$ we then set $d_{\eta} =
d_{\eta s} =_{\tmop{df}} b_{\eta^+ s}$ where $\eta^+ = \min (C_s^+
\backslash (\eta + 1))$. The subscript $s$ on ordinals remains
unaltered throughout the rest of the proof so we shall drop it.
Then we have:\\

\nod 5. Let $\eta \in C_s$, with $ l^i_{\eta^+} < \eta$. Then by
induction on $i$: $l^i_{\eta^+} = l^i_{\eta}$.

\nod 6. Let $\eta \in C_s,$ with $l^i_{\eta^+} < \eta$ then: $$l^{i
+ 1}_{\eta^+} = \eta\mbox{ if }\eta \in \Lambda (l^i_{\eta}, s),
\mbox{ and } = l^{i + 1}_s \mbox{ otherwise }.$$

Proof of 6: $l^i_{\eta^+} = l^i_{\eta}$ by 5. If $\eta \in \Lambda
(l^i_{\eta}, s)$ then $\eta$ is maximal in this set below $\eta^+$. So the
first alternative holds. Note that $i \neq m_{\eta s} - 1$ (otherwise by Lemma
\ref{4} for some $\beta$, $\eta = \lambda (f_{(\beta, l^i_{\eta s}, s)}) \in
\Lambda (l^i_{\eta}, s))$. Thus $l^{i + 1}_{\eta}$ is defined and $l^{i +
1}_{\eta^+}$ must equal this.

\begin{lemma}
  Let $\eta, \mu \in C_s$, with $\eta < \mu$. Then $d_{\eta} <^{\ast} d_{\mu}
  .$
\end{lemma}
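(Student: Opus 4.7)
My plan is to establish the stronger claim that the map $\lambda\mapsto b_\lambda$ is strictly $<^*$-increasing on $C_s^+$. Since $\eta<\mu$ in $C_s$ forces $\eta^+\leq\mu<\mu^+$ with both $\eta^+,\mu^+\in C_s^+$, hence $\eta^+<\mu^+$, and since $d_\eta=b_{\eta^+}$ and $d_\mu=b_{\mu^+}$, this will yield the lemma.

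The fundamental computation is that $b_\mu<^*b_{\mu^+}$ for any $\mu\in C_s$. By Lemma~\ref{4} we have $\mu\in\Lambda(l_\mu,s)$, and facts 5 and 6 (applied with $\eta=\mu$) yield $l^i_{\mu^+}=l^i_\mu$ for $i<m_\mu$ together with $l^{m_\mu}_{\mu^+}=\mu$. Directly from the definitions of $\beta^i$ and $l^{i+1}$, one obtains $\beta^i_{\mu^+}=\beta^i_\mu$ for $i+1<m_\mu$, while the further indices of the $\mu^+$-sequence supply additional, strictly decreasing terms $\beta^{m_\mu-1}_{\mu^+}>\beta^{m_\mu}_{\mu^+}>\cdots$. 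Hence $b_{\mu^+}\setminus b_\mu=\{\beta^{m_\mu-1}_{\mu^+},\beta^{m_\mu}_{\mu^+},\ldots\}$, whose maximum $\beta^{m_\mu-1}_{\mu^+}$ lies in $b_{\mu^+}$, so $b_\mu<^*b_{\mu^+}$. The claim for successors in $C_s^+$ then follows by induction on $\lambda\in C_s^+$ using transitivity of the well-order $<^*$.

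The main obstacle will be the limit case, where $\lambda\in C_s^+$ is a limit point of $C_s^+$ and a chain-of-successors argument does not suffice. For $\lambda'\in C_s^+\cap\lambda$ I plan to compare $\langle l^i_{\lambda'}\rangle$ and $\langle l^i_\lambda\rangle$ directly, using Fact~\ref{lfact} in the form $l^i_\lambda<\lambda'\leq\lambda\Rightarrow l^i_\lambda=l^i_{\lambda'}$. Let $k$ be the largest index with $l^k_\lambda<\lambda'$; then $\beta^i_\lambda=\beta^i_{\lambda'}$ for $i<k$, and the existence of $l^{k+1}_\lambda\in[\lambda',\lambda)$ (i.e.\ an element of $\Lambda(l^k_\lambda,s)$ in that interval) forces $\beta^k_\lambda>\beta^k_{\lambda'}$, or leaves $\beta^k_{\lambda'}$ undefined altogether. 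Since the $\beta^i_\lambda$ are strictly decreasing in $i$, this makes $\beta^k_\lambda$ the maximum of $b_{\lambda'}\triangle b_\lambda$; as $\beta^k_\lambda\in b_\lambda$, we conclude $b_{\lambda'}<^*b_\lambda$, completing the induction and hence the lemma.
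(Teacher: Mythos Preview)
Your stronger claim --- that $\lambda\mapsto b_\lambda$ is strictly $<^*$-increasing on all of $C_s^+$ --- is false, and this is precisely where your limit case breaks. Suppose $\lambda\in C_s^+$ is a limit of $C_s^+$ with $\Lambda(l_\lambda,s)\cap\lambda$ unbounded in $\lambda$ (the typical situation; cf.\ Case~2 of Lemma~\ref{4}). If for instance $m_\lambda=1$, then $b_\lambda=\varnothing$. But pick $\lambda'$ to be, say, the second element of $\Lambda(0,s)$; then $l^1_{\lambda'}=\min\Lambda(0,s)$ exists, so $m_{\lambda'}\geq 2$ and $b_{\lambda'}\neq\varnothing$, giving $b_\lambda<^* b_{\lambda'}$ --- the wrong direction. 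More generally, your limit argument takes $k$ maximal with $l^k_\lambda<\lambda'$ and then invokes $l^{k+1}_\lambda\in[\lambda',\lambda)$; but when $k=m_\lambda-1$ (which occurs whenever $\lambda'>l_\lambda$), $l^{k+1}_\lambda$ is simply undefined and your comparison collapses. The inductive chain-of-successors idea cannot bridge such limit points.

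The paper avoids this by never touching limit points of $C_s^+$: since $\eta^+,\mu^+$ are by construction \emph{successor} points of $C_s^+$, it compares $b_{\eta^+}$ and $b_{\mu^+}$ directly. The crucial fact (your facts 5--6 applied at the successor $\mu^+$) is that the sequence $\langle l^i_{\mu^+}\rangle$ continues until it actually reaches $\mu\geq\eta^+$; hence at the branching index $i$ (maximal with $l^i_{\mu^+}=l^i_{\eta^+}$) the value $l^{i+1}_{\mu^+}$ is defined, so $\beta^i_{\mu^+}$ is defined, and one obtains $\beta^i_{\eta^+}<\beta^i_{\mu^+}$ (or $\beta^i_{\eta^+}$ undefined). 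This is essentially your limit-case computation, but carried out for the right pair of points. Incidentally, even your successor step is slightly off: the first $j$ with $\mu\in\Lambda(l^j_\mu,s)$ need not be $m_\mu-1$, so the sequences $\langle l^i_{\mu^+}\rangle$ and $\langle l^i_\mu\rangle$ may diverge earlier than you assert --- though the conclusion $b_\mu<^* b_{\mu^+}$ survives.
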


\nod{\tmstrong{Proof}} \ Let $\eta^+ = \min (C_s^+ \backslash (\eta
+ 1))$, $\mu^+ = \min (C_s^+ \backslash (\mu + 1))$. Let $i$ be
maximal so that  $l^i_{\mu^+} = l^i_{\eta^+} .$Then $\beta^j_{\mu^+}
= \beta^j_{\eta^+}$ for $j < i$. As \ $l^i_{\mu^+} \leq \eta < \mu$,
we have by 6. above that $l^{i + 1}_{\mu^+}$ is defined and $l^{i +
1}_{\mu^+} = \mu$ or $l^{i + 1}_{\mu}$. Moreover then
$\beta^i_{\mu^+}$ is defined, and by maximality of $i$, $l^{i +
1}_{\eta^+} \neq l^{i + 1}_{\mu^+}$.

{\tmem{Claim}} \ $l^{i + 1}_{\eta^+} < l^{i + 1}_{\mu^+}$.

That $l^{i + 1}_{\mu^+} < \eta^+$ is ruled out: otherwise $l^{i +
1}_{\eta^+} = l^{i + 1}_{\mu^+}$ again). So  $l^{i + 1}_{\eta^+} <
\eta^+ \leq l^{i + 1}_{\mu^+}$. \mbox{}\hfill Q.E.D. {\tmem{Claim.}}

As $\beta^i_{\mu^+}$ is defined, if $_{} \beta^i_{\eta^+} $is
undefined, then we'd be finished. Set $l = l^i_{\mu^+} =
l^i_{\eta^+} .$ Then $\lambda (f_{(\beta^i_{\eta^+}, l, s)}) = l^{i
+ 1}_{\eta^+}$ and $\lambda (f_{(\beta^i_{\mu^+}, l, s)}) = l^{i +
1}_{\mu^+}$. Hence $_{} \beta^i_{\eta^+} < \beta^i_{\mu^+}$ and thus
$d_{\eta} <^{\ast} d_{\mu}$ as required. \ \mbox{}\hfill Q.E.D.

\begin{lemma}
  \label{ordertype}Let $\alpha$ be p.r. closed so that for some $\alpha_0 <
  \alpha$ $\lambda (f_{(\alpha_0, 0, s)}) = \nu_s$. Then $\tmop{ot} (C_s) <
  \alpha$.
\end{lemma}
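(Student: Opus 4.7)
The plan is to map $\eta \mapsto d_\eta$ injectively and $<^{\ast}$-increasingly from $C_s$ into $([\alpha_0]^{<\omega}, <^{\ast})$, and then use the primitive recursive closure of $\alpha$ to bound the order type of the target.

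First I would exploit the hypothesis $\lambda(f_{(\alpha_0,0,s)}) = \nu_s$ to control $\beta^0_{\eta^+ s}$ uniformly in $\eta$. Since $l^0_{\eta^+ s} = 0$ by definition, we have
\[ \beta^0_{\eta^+ s} = \max\{\beta \mid \lambda(f_{(\beta, 0, s)}) < \eta^+\}. \]
The map $\beta \mapsto \lambda(f_{(\beta,0,s)})$ is monotone non-decreasing in $\beta$ (larger hulls produce larger suprema), and by hypothesis it already attains the value $\nu_s \geq \eta^+$ at $\beta = \alpha_0$. Hence $\beta^0_{\eta^+ s} < \alpha_0$ for every $\eta \in C_s$. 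Combined with property 4 above (strict descent $\beta^{i+1}_{\eta^+ s} < \beta^{i}_{\eta^+ s}$) and the finiteness of $m_{\eta^+ s}$, this gives $d_\eta = b_{\eta^+ s} \subseteq \alpha_0$, with $d_\eta \in [\alpha_0]^{<\omega}$.

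Next I would invoke the previous lemma: for $\eta < \mu$ both in $C_s$, one has $d_\eta <^{\ast} d_\mu$. Thus the assignment $\eta \mapsto d_\eta$ is a strict order-embedding of $(C_s, \in)$ into $([\alpha_0]^{<\omega}, <^{\ast})$. In particular $\mathrm{ot}(C_s) \leq \mathrm{ot}([\alpha_0]^{<\omega}, <^{\ast})$.

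Finally I would observe that $\mathrm{ot}([\alpha_0]^{<\omega}, <^{\ast})$ is bounded by an ordinal primitive recursive in $\alpha_0$: indeed, by stratifying on the maximum element, one sees inductively that the order type of the $<^{\ast}$-initial segment up to any $u \in [\alpha_0]^{<\omega}$ can be computed by a p.r. recursion from $\alpha_0$ and the elements of $u$. Since $\alpha$ is p.r. closed and $\alpha_0 < \alpha$, this order type is strictly below $\alpha$, so $\mathrm{ot}(C_s) < \alpha$ as desired.

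The only routine obstacle is writing out the p.r. bound on $\mathrm{ot}([\alpha_0]^{<\omega}, <^{\ast})$; everything else is immediate from the uniform bound $\beta^0_{\eta^+ s} < \alpha_0$ forced by the hypothesis, together with the $<^{\ast}$-monotonicity of $\eta \mapsto d_\eta$ already established.
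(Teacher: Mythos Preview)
Your proposal is correct and follows essentially the same route as the paper: bound all $\beta^i_{\eta^+ s}$ below $\alpha_0$ (the paper states this in one line, you spell out the monotonicity of $\beta\mapsto\lambda(f_{(\beta,0,s)})$), embed $C_s$ into $([\alpha_0]^{<\omega},<^\ast)$ via $\eta\mapsto d_\eta$ using the preceding lemma, and then bound the order type of the target by p.r.\ closure of $\alpha$. The paper simply asserts the standard fact $\mathrm{ot}([\alpha]^{<\omega},<^\ast)=\alpha$ for p.r.\ closed $\alpha$ and uses that $[\alpha_0]^{<\omega}$ sits as a proper $<^\ast$-initial segment, whereas you sketch the p.r.\ recursion directly; either way the conclusion is the same.
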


\nod{\bf{Proof: }} First note that $\tmop{ot} ( \langle [\alpha ]^{<
\omega}, <^{\ast} \rangle) = \alpha$. Let $\alpha_0 < \alpha$ {be}
{such}, {with} {the} {property} \tmop{that} $\lambda (f_{(\alpha_0,
0, s)}) = \nu_s$. Then $\{\beta^i_{\eta s} \mid \eta \leq \nu_s, i +
1 < m_{\eta s} \} \subseteq \alpha_0$. Thus $\tmop{ot} \langle
\{d_{\eta} \mid \eta \in C_s \}, <^{\ast} \rangle \leq \tmop{ot} (
\langle [\alpha ]^{< \omega}, <^{\ast} \rangle) < \alpha$. Thus
$\tmop{ot}
(C_s) < \alpha$.  \mbox{}\hfill Q.E.D. \\

To obtain the requisite $\langle C_{\nu} \mid \nu \in S \rangle$ for
a Global sequence in $K$, we assign the appropriate level $K_{\beta
(\nu)}$ as $M_s$ over which $\nu$ is definably singularised. Then $s
= \langle \nu, K_{\beta (\nu)} \rangle \in S^+$.  \mbox{}\hfill
Q.E.D.(Global $\LBox)$

\section{Obtaining Inner Models with measurable cardinals}

We assume that we have a Global $\square$ sequence $\langle C_{\nu}
| \nu \in S \rangle$ in $K$ constructed as in the last section.  We
 have:

\begin{theorem}
  \label{3.1}Assume $n >3$ and $\{\alpha < \omega_{n } \mid \alpha \in
  \mathrm{Cof}(\omega_{n-2}) \cap K$-$\tmop{Sing}\}$ is, in $V$, stationary below
  $\omega_{n}$. Then
  \[ T_{n } =_{\tmop{df}} \{\beta \in \mathrm{\tmop{Cof}}(\omega_1) \cap
     \omega_{n} \mid \mathrm{\tmop{ot}} (C_{\beta}) \geq \omega_{n - 3} \}
  \]
  is stationary in $\omega_{n }$.
\end{theorem}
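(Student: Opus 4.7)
The idea is to exploit the coherence clause (iii) of Theorem \ref{ksquare}: if $\beta$ is a limit point of $C_{\alpha}$ then $C_{\beta} = C_{\alpha}\cap\beta$, so $\mathrm{ot}(C_{\beta})$ equals the position of $\beta$ in $C_{\alpha}$. The strategy is therefore to start from some $\alpha\in K\text{-Sing}$ of $V$-cofinality $\omega_{n-2}$ supplied by the stationarity hypothesis, use the $\square$-sequence to obtain a club $C_{\alpha}\subseteq\alpha$ of order type at least $\omega_{n-2}$, and then pick out limit points $\beta$ of $C_{\alpha}$ whose position in $C_{\alpha}$ is $\geq\omega_{n-3}$ and whose $V$-cofinality is $\omega_{1}$. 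Such $\beta$ will automatically lie in $T_{n}$.

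More precisely, fix an arbitrary club $D\subseteq\omega_{n}$ and aim to find $\beta\in T_{n}\cap D$. Replace $D$ by its club subset of limit points $D'=\{\delta\in D\mid \sup(D\cap\delta)=\delta\}$, and use the stationarity hypothesis to pick $\alpha\in D'\cap\mathrm{Cof}(\omega_{n-2})\cap K\text{-Sing}$. Then $\alpha$ is singular in $V$ as well (its $V$-cofinality $\omega_{n-2}$ is strictly below $\alpha$ since $\alpha>\omega_{n-2}$), so the filled-out global $\square$ sequence assigns it a $C_{\alpha}$ satisfying (a)--(c) of Definition \ref{globalsquare}; because $\mathrm{cf}^V(\alpha)=\omega_{n-2}>\omega$, clause (i) of Theorem \ref{ksquare} makes $C_{\alpha}$ closed and unbounded in $\alpha$, whence $\mathrm{cf}(\mathrm{ot}(C_{\alpha}))=\omega_{n-2}$ and in particular $\mathrm{ot}(C_{\alpha})\geq\omega_{n-2}$.

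Now work inside $\alpha$ and intersect the following four subsets: the club $D\cap\alpha$ (club in $\alpha$ because $\alpha\in D'$); the set $C_{\alpha}^{\ast}$ of limit points of $C_{\alpha}$ (club, as $C_{\alpha}$ is club in $\alpha$); the end-segment $\{x\in C_{\alpha}\mid \mathrm{ot}(C_{\alpha}\cap x)\geq\omega_{n-3}\}$ (club, as its complement in $\alpha$ is bounded by the $\omega_{n-3}$-th element of $C_{\alpha}$, which is below $\alpha$ since $\mathrm{ot}(C_{\alpha})\geq\omega_{n-2}>\omega_{n-3}$); and $\mathrm{Cof}(\omega_{1})\cap\alpha$ (stationary in $\alpha$, because $\mathrm{cf}^{V}(\alpha)=\omega_{n-2}\geq\omega_{2}>\omega_{1}$). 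The intersection of three clubs with a stationary set is stationary, hence nonempty. Any $\beta$ picked from it is a limit point of $C_{\alpha}$, so by coherence $C_{\beta}=C_{\alpha}\cap\beta$, giving $\mathrm{ot}(C_{\beta})\geq\omega_{n-3}$; moreover $\mathrm{cf}^{V}(\beta)=\omega_{1}$ and $\beta\in D$, so $\beta\in T_{n}\cap D$, as required.

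The main obstacle is really a bookkeeping matter: verifying that the Jensen--Zeman construction on $S$ extends through the standard filling-out to all singulars of $V$ in a manner that preserves the coherence clause (iii) used above. This is exactly the folklore remark made just after the statement of Theorem \ref{ksquare}. Once it is granted, the argument is purely combinatorial and uses no further fine structure beyond the properties of the global $\square$ sequence produced in Section 3.
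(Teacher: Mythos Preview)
Your proof is correct and follows essentially the same route as the paper's: pick a $K$-singular $\gamma$ (your $\alpha$) of $V$-cofinality $\omega_{n-2}$ among the limit points of the given club, note $C_{\gamma}$ is club in $\gamma$ of order type $\geq\omega_{n-2}$, then select a limit point $\beta$ of $C_{\gamma}$ inside the club with $\mathrm{cf}^V(\beta)=\omega_1$ and position $\geq\omega_{n-3}$, and invoke coherence. Your version is slightly more explicit in justifying why such a $\beta$ exists (via the stationarity of $\mathrm{Cof}(\omega_1)$ below $\alpha$), whereas the paper simply asserts one can ``take'' such a $\beta$; and your worry about filling out is largely unnecessary here, since coherence clause (iii) of Theorem~\ref{ksquare} already guarantees that any limit point of $C_{\gamma}$ lies in $S$ and has its $C$-sequence defined.
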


\begin{proof}
  Let $C \subseteq \omega_{n }$ be an arbitrary closed and unbounded set in
  $\omega_{n }$. Take \ $\gamma \in C^{\ast} \cap \mathrm{Cof}(\omega_{n-2})$
  with $\gamma$ a $K$-singular; in other words with $C_{\gamma}$ defined. As
  $\tmop{cf} (\gamma) > \omega$, $C_{\gamma}$ is cub in $\gamma$.  Then $C
  \cap C_{\gamma}$ is closed unbounded in $\gamma$ of ordertype $\geq
  \omega_{n-2}$. Take $\beta \in$ $(C \cap C_{\gamma})^{\ast}$ such that
  $cf(\beta) = \omega_1$ and $\mathrm{{ot}} (C \cap C_{\gamma}
  \cap \beta) \geq \omega_{n - 3}$. By the coherency property \ref{Coherence}(c),
  $C_{\beta} = C_{\gamma} \cap \beta$. Thus $\beta \in C \cap
T_{n}
  \neq \emptyset$.
\end{proof}

Note that $(T_n)_{3 < n < \omega}$ as above would be a sequence of
sets to which we could apply the $\tmop{MS}$-principle, if we knew
that they were (in $V$) stationary beneath the relevant $\aleph_n$.
This is what the assumption in the above theorem achieves. The
following is essentially our main Theorem \ref{Main}.

\begin{theorem}
  \label{0-sharp}If $\tmtextrm{\tmop{MS}} ((\aleph_n)_{1 < n < \omega},
  \omega_1)$ holds then there exists $k < \omega$ so that for all $n>k$, \ there
  is $D_n$, closed and unbounded in $\omega_n$, so that $$D_n \cap
  \mathrm{Cof}(\omega_{n - 2}) \subseteq \{\alpha < \omega_{n} \mid  o^K
  (\alpha) \geq \omega_{n - 2} \}.$$
\end{theorem}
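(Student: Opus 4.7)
I would argue by contradiction: suppose there is no such $k$, so there are arbitrarily large $n$ (call these \emph{bad}) for which the set
\[ U_n := \{\alpha < \omega_n : \mathrm{cf}(\alpha) = \omega_{n-2} \text{ and } o^K(\alpha) < \omega_{n-2}\} \]
is stationary in $\omega_n$. On the club $[\omega_{n-1}, \omega_n)$ we have $|\alpha| = \omega_{n-1} > \omega_{n-2} = \mathrm{cf}(\alpha)$, so Weak Covering (Theorem~\ref{WCL}) gives $o^K(\alpha) \geq \omega_{n-2}$ for any $K$-regular such $\alpha$. Thus any $\alpha \in U_n$ must be $K$-singular on a club, whence $\mathrm{Cof}(\omega_{n-2}) \cap K\text{-}\mathrm{Sing} \cap \omega_n$ is stationary in $\omega_n$ for each bad $n$. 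By Theorem~\ref{3.1} it follows that $T_n$ is stationary in $\omega_n$ for each bad $n > 3$.

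For each $n$ with $1 < n < \omega$, set $S_n := T_n$ when $n$ is bad and $n > 3$, and otherwise let $S_n$ be any stationary subset of $\mathrm{Cof}(\omega_1) \cap \aleph_n$. Applying the hypothesis $\mathrm{MS}((\aleph_n)_{1 < n < \omega}, \omega_1)$ to this sequence and to the test structure $\mathfrak{A} = \langle H_\theta, \in, <_\theta, K, \vec{C}\rangle$ (for a suitably large $\theta$, a fixed well-ordering $<_\theta$ of $H_\theta$, and $\vec{C} = \langle C_\nu : \nu \in S\rangle$ the global $\square$-sequence from Section~3), we obtain $\mathfrak{B} \prec \mathfrak{A}$ such that $\beta_n := \sup(\mathfrak{B} \cap \aleph_n) \in S_n$ for every $n$. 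In particular, for every bad $n$ we have $\mathrm{cf}(\beta_n) = \omega_1$ and $\mathrm{ot}(C_{\beta_n}) \geq \omega_{n-3}$.

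Let $\pi : \bar{\mathfrak{B}} \to \mathfrak{B}$ be the Mostowski collapse and set $\bar K := \pi^{-1}(K)$. The plan from here is to coiterate $\bar K$ with $K$ and read off from the resulting iteration map that $K$ must contain many measurables of high Mitchell order, using the coherence of the $C_{\beta_n}$-sequences across all bad $n$ simultaneously. Concretely, each $\beta_n$ comes equipped with a singularizing mouse $M_{s_{\beta_n}}$ which pulls back through $\pi$ to a corresponding structure in $\bar K$; by Lemma~\ref{dpreserve} the associated $d$-parameters are preserved under the normal iteration, and by the analysis culminating in Lemma~\ref{ordertype} the order types $\mathrm{ot}(C_{\beta_n}) \geq \omega_{n-3}$ cannot be prolonged by such iterations. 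This should force, for each bad $n$, a stationary-in-$\aleph_n$ set of $\kappa$ with $o^K(\kappa) \geq \omega_{n-2}$, directly contradicting the assumption that $n$ is bad.

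The principal obstacle is the last step: translating the purely combinatorial coherence of $\langle C_{\beta_n} : n \text{ bad}\rangle$ into the existence of measurables of Mitchell order $\omega_{n-2}$ in $K$, and doing so uniformly across $\omega$-many levels. This is where mutual stationarity is used in full force, and where the explicit identification of the $C_{\nu}$-sequences with maps between singularizing mice (Definition~\ref{squareobjects} and Lemma~\ref{initseg}) plus the preservation results of Section~2 are indispensable. I would expect the bulk of Section~4 to be devoted to making this extraction precise.
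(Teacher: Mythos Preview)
Your setup through the application of MS is essentially correct and matches the paper's. But your description of the endgame has the contradiction running backwards. You write that the coiteration should ``force\ldots a stationary-in-$\aleph_n$ set of $\kappa$ with $o^K(\kappa)\geq\omega_{n-2}$''; yet (a) a stationary such set would not contradict the stationarity of your $U_n$, and (b) no measurables are manufactured anywhere in the argument --- they enter only at the very start, via Weak Covering, and never again. Your remark that order types ``cannot be prolonged by iterations'' is the right instinct, but the conclusion to draw from it is a \emph{uniform upper bound} on $\mathrm{ot}(C_{\beta_n^\ast})$, not the production of large-order points.

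Here is the actual shape. Write $\beta_n^\ast=\sup(\mathfrak{B}\cap\aleph_n)$ and $\beta_n=\pi^{-1}(\omega_n)$. Coiterate $K$ with $\bar K$: after an initial truncation on the $K$-side, the $\bar K$-side never moves (a nontrivial step), and the $K$-iterates $M_{i_n}$ line up with $\bar K$ below $\beta_n$. A delicate argument --- using that $\pi$ is continuous at ordinals of $V$-cofinality $\omega$, which is precisely why the $\aleph_n$ must be consecutive --- shows that no cofinal tail of critical points below $\beta_n$ is the iterated image of a single one; from this one deduces $s_n=\langle\beta_n,M_{i_n}\rangle\in S^+$. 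Since truncations and the finite parameters $p,d$ stabilize by some stage $i_{k_0}$, a hull of size $\beta_{k_0}+1$ is already cofinal in $\omega\rho_{s_n}$ for every relevant $n$, and Lemma~\ref{ordertype} (together with Lemma~\ref{dpreserve}) bounds $\mathrm{ot}(C_{s_n})$ uniformly by some $\tilde\beta<\beta_{k_0+1}$, independent of $n$. One then lifts $M_{i_n}$ \emph{up} via a pseudo-ultrapower along $\pi$ (not ``pulls back'') to a structure $M_n^\ast$, checks this is the canonical $K$-singularizing level for $\beta_n^\ast$, and the same bound transfers: $\mathrm{ot}(C_{\beta_n^\ast})<\omega_{k_0+1}$. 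But $\beta_n^\ast\in T_n$ demands $\mathrm{ot}(C_{\beta_n^\ast})\geq\omega_{n-3}$; taking a bad $n>k_0+3$ is the contradiction.
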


\nod{\bf{Proof: }} We suppose not. Then for arbitrarily large $n <
\omega \quad S^0_n =_{\tmop{df}} \{\alpha < \omega_n \mid \alpha \in
{\mathrm{Cof}}(\omega_{n - 2})\, \wedge \,\tmop{Sing}^K (\alpha$)\}
is stationary in $\omega_n$ by appealing to Mitchell's Weak Covering
Lemma for $K$, \ref{WCL}.

We shall define a sequence $(S_n)_{1<n < \omega}$ of stationary
sets. By Theorem \ref{3.1}, for arbitrarily large $n < \omega,$
$T_n$ is stationary in $\omega_n$; for such $n$ (which we shall call
{\tmem{relevant}}) let $S_n = T_n$; for all other $n>1$ take $S_n =
\mathrm{Cof}(\omega_1) \cap \omega_n$.

Define the first-order structure $\mathfrak{A}=
(H^{}_{\omega^{}_{\omega + 1}}, K_{\omega_{\omega + 1}}, \in,
\vartriangleleft, \langle f_n \rangle_{n < \omega}, \cdots)$ with a
wellordering $\vartriangleleft$ of the domain of $\mathfrak{A}$, and
the sequence of finitary functions $f_n$ including a complete set of
skolem functions for $\mathfrak{A}$. The mutual stationarity
property yields some $X \prec H_{\omega^{}_{\omega + 1}}$ such that
$$\{\omega_n \mid n \leq \omega\} \subseteq X, \hspace{0.75em}
\hspace{0.75em} \forall n >2 \hspace{0.15em} (\sup X \cap \omega_n)
\in S_n, \mbox{ and }\omega_2 \subseteq X.$$

(We may assume without loss of generality the latter clause, since a
direct argument shows that all ordinals less than, say, $\omega_k$
may be added to the hull $X$ without increasing the $\sup X \cap
\omega_n $ for any $n > k$. (This goes as follows: let $X_0$ be a
hull that satisfies the MS property and the first two requirements
above: $\{\omega_n \mid n \leq \omega\} \subseteq X_0, \, \forall n
>2 \hspace{0.25em} (\sup X_0 \cap \omega_n) \in S_n$. We now
consider the enlarged hull of $X =_{\tmop{df}} X_0 \cup \omega_k$ in
$\mathfrak{A}$. Let $n > k$. Consider for each $m,$ and each
$\vec{x} \in [X_0]^p$, $\sup \{ f_m ( \vec{\xi}$,
\tmtexttt{$\vec{x}) \cap \omega_n \mid \text{$\vec{\xi} \in
[\omega_k]^l \}$}$} where we have assumed that $f_m$ is $l + p$-ary.
But this is a supremum definable in $X_0$ from $f_n$, $\vec{x}$,
$\omega_n$, and $\omega_k$. \ Hence it is less than sup$(X_0 \cap
\omega_n$). \ By choice of $\langle f_n \rangle$, every $y \in X$ is
of the form $f_m ( \vec{\xi}$, \tmtexttt{$\vec{x})$} so this
suffices.) \

\ \ Let $\pi : ( \bar{H}_{}, \overline{K}, \in, \ldots) \cong (X, K
\cap X, \in, \ldots)$, be the inverse of the transitive collapse,
and $\beta_n =_{\tmop{df}} \pi^{- 1} (\omega_n)$ for $n \leq
\omega$. For each $2 < n < \omega : \beta_n > \aleph_2$ and
$\op{\tmop{cof}} (\beta_n) = \omega_1$. \ Let $\beta^{\ast}_n
=_{\tmop{df}} \sup (\pi$``$\beta_n)$. We now consider the
coiteration of $K$ with $\overline{K} .$ Let $((M_i, \pi_{i, j},
\nu_i)_{i \leq j \leq \theta} \,\, ,\,\, (N_i, \sigma_{i, j},
\nu_i)_{i \leq j \leq \theta})$ be the resulting coiteration of $(K,
\overline{K})$.\\

(1) {\tmem{The first ultrapower on the $K$ side is taken after a truncation.
In fact $\pi_{0, 1} : M^{\ast}_0 \longrightarrow M_1,$}} {\tmem{where $\pi
\neq \tmop{id}$ and $M^{\ast}_0$ is a proper initial segment of}} $K$.

\nod{\bf{Proof: }}Note that $\beta_3$ is a cardinal of
$\overline{H}$, whilst $K_{\beta_3} = \bar{K}_{\beta_3}$ as $X \cap
\omega_3$ is transitive. However $\tmop{cf} (\beta_3) = \omega_1$
and is thus not a true cardinal of $K$ (by the Covering Lemma for
$K$). Hence the first action of the comparison will be a truncation
on the $K$ side to a structure $M^{\ast}_0$ in which $\beta_3$ is a
cardinal., and thence the ultrapower map $\pi_{0, 1}$ as stated.  \mbox{}\hfill Q.E.D.(1)\\

(2) {\tmem{On the $\overline{K} $ side of the coiteration all the
maps $\sigma_{i, j}$ are the identity:\\ $\forall i \leq \theta N_i
= \overline{K} .$}}

\nod{\bf Proof: } Suppose this is false for a contradiction and let
$\iota$be the least index where an ultrapower of $N_{\iota} =
\overline{K}$ is taken by some $E^N_{\nu_{\iota}}$ with critical
point $\kappa_{\iota}$. On the $K$ side let $\zeta$ be least so that
$\mathcal{P}(\kappa_{\iota}) \cap M_{\iota} \| \zeta =
\text{$\mathcal{P}(\kappa_{\iota}) \cap N_{\iota}$} .$ Let us set
$M^{\ast}$ to be this $M_{\iota} \| \zeta$. (Note that no truncation
is taken in the comparison on the $\overline{K}$ side.). Note that
since $M_0^{\ast}$ was a truncate of $K$, we have that thereafter
each $M_i$ is sound above $\kappa_i$ and that $\omega \rho^{n +
1}_{M_i} \leq \kappa_i < \omega \rho^n_{M_i}$ for some $n = n (i)$.

As $E^N_{\nu_{\iota}}$ is a total measure on $N_{\iota} = \overline{K} $ we
have that $\tilde{E} =_{\tmop{df}} E^{K_{}}_{\pi (\nu_{\iota})} = \pi
(E^N_{\nu_{\iota}})$ is a full measure in $K$ with critical point
$\tilde{\kappa} =_{\tmop{df}} \pi (\kappa_{\iota})$.

We apply the measure $E^N_{\nu_{\iota}}$ to $M^{\ast}$ itself and form the
fine structural ultrapower $\widetilde{M} = \tmop{Ult}^{\ast} (M^{\ast},
\text{$E^N_{\nu_{\iota}}$)}$ with map $t : M^{\ast} \longrightarrow
\widetilde{M}$. Note that by the weak amenability of $E^N_{\nu_{\iota}}$,
$\widetilde{M} \cap \mathcal{P}(\kappa_{\iota}) = M^{\ast} \cap
\mathcal{P}(\kappa_{\iota})$, and that $t$ is $\Sigma_0^{(n)}$ and cofinal.

We should like to compare $M^{\ast} $ with $\widetilde{M}$ but for this we
need the following Claim.

{\tmem{Claim 1}} \ $\widetilde{M}$ is normally iterable above
$\kappa_{\iota}$.

Proof: First note:

(i) $M^{\ast}$ and $\overline{K}$ agree up to $\nu_{\iota}$, hence
if $E_{\iota}$ is the extender sequence on $M_{\iota}$ we have that
$\pi \upharpoonright J_{\nu_{\iota}}^{E_{\iota}} :
J_{\nu_{\iota}}^{E_{\iota}} \longrightarrow
J^{E^K}_{\widetilde{\nu}}$ cofinally for $\widetilde{\nu}
=_{\tmop{df}} \sup \pi \text{``} \nu_{\iota} .$

(ii) $\tmop{cf} (\nu_{\iota}) > \omega$ and hence we have a
canonical extension $\pi^{\ast} \supseteq \pi \upharpoonright
J_{\nu_{\iota}}^{E_{\iota}}$ with $\pi^{\ast} : M^{\ast}
\longrightarrow M'$ with \ \ \ $\text{$\omega \rho^{n +
1}_{M^{\ast}} \leq \kappa_{\iota} < \omega \rho^n_{M^{\ast}}$}$
implying that $\omega \rho^{n + 1}_{M'} \leq \tilde{\kappa} < \omega
\rho^n_{M'}$, $M'$ sound above $\tilde{\kappa}$, and $\pi^{\ast}$
$\Sigma_0^{(n)}$ preserving.

Proof: Note  that $\tmop{cf} (\nu_{\iota}) = \tmop{cf}
(\kappa_{\iota}^{+ M_{\iota}}) > \omega$ since otherwise we have
that $\kappa_{\iota}^{+ M_{\iota}}$ is a $\overline{K}$ cardinal,
which $H$ will think, by Weak Covering, has uncountable cofinality
equal to some $\beta_i$. As $\tmop{cf} (\beta_i) = \omega_1$ it
would be a contradiction to have $\tmop{cf} (\nu_{\iota}) = \omega$.
By the definition of $\zeta$ we have that $\text{$\text{$\omega
\rho^{n + 1}_{M^{\ast}} \leq \kappa_{\iota} < \omega
\rho^n_{M^{\ast}}$}$}$ for some $n$ and that $M^{\ast}$ is sound
above $\kappa_{\iota}$. Consequently $\nu_{\iota}$ is definably
singularized over $M^{\ast}$ and we have the right conditions to
apply \ref{kpseudo} with the other properties mentioned following
from that.  \mbox{}\hfill Q.E.D.(ii)\\

(iii) $\tilde{\kappa}$ a $K$-cardinal, \ $\omega \rho^{n + 1}_{M'} \leq
\tilde{\kappa}$, and $M'$ sound above $\kappa'$ imply that $M'$ is an initial
segment of $K$.

Applying the full measure $\widetilde{E}$ yields $\sigma : K
\longrightarrow_{\tilde{E}} \widetilde{K}$. Let $\widetilde{M}' =
\sigma (M')$, and this is also an initial segment of
$\widetilde{K}$. As $\pi^{\ast} \supseteq \pi \upharpoonright
J_{\nu_{\iota}}^{E_{\iota}}$ we have:

(iv) $X \in \text{$E^N_{\nu_{\iota}} \longleftrightarrow \pi^{\ast} (X) = \pi
(X) \in \widetilde{E}$}$.

Defining $\mathbb{D}(M^{\ast}, E^N_{\nu_{\iota}})$ the term model
for the ultrapower we have:

(v) (a) The map $d ([f]) = \sigma \circ \pi^{\ast} (f) (
\tilde{\kappa})$ is a structure preserving map $d : \text{
$\mathbb{D}(M^{\ast}, E^N_{\nu_{\iota}}) \longrightarrow
\widetilde{M}'$}$. \ \ (a) The map $k : \widetilde{M}
\longrightarrow \widetilde{M}'$ is $\Sigma_0^{(n)}$-preserving with
$k (\kappa_{\iota}) = \tilde{\kappa}$.

Proof: This is a standard computation for (a), and for (b) note that
$\omega \rho^{n + 1}_{\tilde{M}'} \leq \sigma ( \tilde{\kappa}) <
\omega \rho^n_{\widetilde{M}' }$ by (ii) and the elementarity of
$\sigma$. \ \mbox{}\hfill Q.E.D.(v)\\

By (v)(b) since $\widetilde{M}'$ is normally iterable above
$\tilde{\kappa}$ $\widetilde{M}$ will be normally iterable above
$\kappa_{\iota}$, as required. \mbox{}\hfill  Q.E.D. {\tmem{Claim
1}}.

{\tmem{Claim 2}} $E^N_{\nu_{\iota}} = E^{M^{\ast}}_{\nu_{\iota}}$.

Proof: Since $M^{\ast}$ and $\widetilde{M}$ agree up to
$\nu_{\iota}$ the coiteration of these two is above
$\kappa_{\iota}$. \ By {\tmem{Claim 1}} this coiteration is
successful with iterations $i : \widetilde{M} \longrightarrow
\widetilde{M}_{\theta}$ and $j : M^{\ast} \longrightarrow
M^{\ast}_{\theta}$ say.

(vi) The iteration $i$ of $\widetilde{M}$ is above
$(\kappa_{\iota}^+)^{\widetilde{M}} = (\kappa_{\iota}^+)^{M^{\ast}} .$

Proof: $\overline{K}, M^{\ast}, \widetilde{M} $all agree up to
$\nu_{\iota}$ and forming $\widetilde{W} = \tmop{Ult}
(J^{E^{M^{\ast}}}_{\nu_{\iota}}, E^N_{\nu_{\iota}})$ we see
therefore that it is an initial segment of $\widetilde{M}$. \ From
coherence of our extender sequences we know that
$$E^{\widetilde{M}}
\upharpoonright \nu_{\iota} = {E^{\overline{K}} \upharpoonright
\nu_{\iota} = E^{M^{\ast}} \upharpoonright \nu_{\iota}} \ \mbox{ and
}{E_{\nu_{\iota}}^{\widetilde{M} } = {\O} =
E_{\nu_{\iota}}^{\widetilde{W}}}.$$ By the initial segment property
of extender sequences we have that there are no further extenders on
the $E^{\widetilde{M}}$ sequence with critical point
$\kappa_{\iota}$. Hence all critical points used in forming the
iteration map $i$ are above
$(\kappa_{\iota}^+)^{\widetilde{M}}$.  \mbox{}\hfill Q.E.D. (vi)\\

The rest of the argument is fairly standard.

(vii) $\widetilde{M}_{\theta} = M^{\ast}_{\theta}$.

Proof: Let $A \in \Sigma_1^{(n)} (M^{\ast})$ in $p_{M^{\ast}}$ be
such that $A \cap \kappa_{\iota} \notin M^{\ast}$, \ and then note
that $A \cap \kappa_{\iota} \notin \tilde{M} $ as they agree about
subsets of $\kappa_{\iota}$. Hence if the iteration $j$ is simple,
then $M^{\ast}_{\theta}$ is not a proper initial segment of
$\text{$\widetilde{M}_{\theta}$}$. But if $j$ is non-simple then we
reach the same conclusion as no proper initial segment of
$\widetilde{M}_{\theta}$ can be unsound. \ Hence
$\text{$\widetilde{M}_{\theta}$}$ is an initial segment of
$M^{\ast}_{\theta}$. But again we cannot have that it is a proper
initial segment, since using the $\Sigma_0^{(n)}$ preservation
property of $t$ we'd have $A \cap \kappa_i$ in $M^{\ast}_{\theta}$ a
contradiction as before. \ \ \ \mbox{}\hfill Q.E.D. (vii)\\

(viii) (i) $\omega \rho^{n + 1}_{\widetilde{M}} = \omega \rho^{n +
1}_{M^{\ast}} = \omega \rho^{n + 1}_{M_{\theta}^{\ast}}$.

\ \ \ \ (ii) If $p = p_{M^{\ast}} \backslash \omega \rho^{n + 1}_{M^{\ast}}$
then $i \circ t (p) = p_{M^{\ast}_{\theta}, n + 1}$.

\ \ \ \ (iii) $t$ is $ \Sigma^{\ast}$-preserving.

Proof: These are standard arguments from the proof of solidity for
mice - \tmtextit{cf. }{\hspace{0.25em}}{\cite{Z02}} p153-4. In (ii)
one first sees that $i \circ t (p) \in P^{n +
1}_{M^{\ast}_{\theta}}$; a solidity argument on witnesses
$W_{M^{\ast}}^{\alpha, p}$ shows that in fact $\text{$i \circ t (p)
= p_{M^{\ast}_{\theta}, n + 1}$}$.

(ix) $j \upharpoonright \kappa = \tmop{id} = i \circ t \upharpoonright
\kappa$; however $\tmop{crit} (j) = \kappa_{\iota}$.

Proof: As the first clause is immediate, we argue that $j
(\kappa_{\iota}) > \kappa_{\iota}$. As $j$ is an iteration map $j
(p) \in P_{M^{\ast}_{\theta}}^{n + 1}$. By the Dodd-Jensen Lemma
(\em cf.}
%{\hspace{0.25em}}
{\cite{Z02}} Theorem 4.3.9) $j (p)
\leq^{\ast} i \circ t (p)$, and hence by (8)(ii) we have $j (p) = i
\circ t (p)$. By the soundness of above $\kappa_{\iota}$ we have
that $\kappa = \widetilde{h}_{M^{\ast}}^{n + 1} (i, \xi, p)$ for
some $i < \omega$, some $\xi < \kappa_{\iota}$. Hence $j
(\kappa_{\iota}) = \widetilde{h}_{M_{\theta}^{\ast}}^{n + 1} (i,
\xi, j (p))$. As $j (p) = i \circ t (p)$ we have $j (\kappa_{\iota})
= i \circ t ( \widetilde{h}_{M^{\ast}}^{n + 1} (i, \xi, p)) = i
\circ t (\kappa_{\iota}) > \kappa_{\iota}$. \mbox{}\hfill  Q.E.D.
(ix)\\

Hence $\kappa_{\iota}$ is the first point moved by $j$ and thus some
measure $E^{M^{\ast}}_{\gamma}$ is applied as the first ultrapower
on the $M^{\ast}$ side of the coiteration with $\tmop{crit}
(E^{M^{\ast}}_{\gamma}) = \kappa_{\iota}$ and $\gamma$ least with
$E^{M^{\ast}}_{\gamma} \neq E^{\widetilde{M}}_{\gamma}$. As
$E^{M^{\ast}} \upharpoonright \nu_{\iota} = E^{\widetilde{M}}
\upharpoonright \nu_{\iota}$ and (see the proof of (vi))
$E^{\widetilde{M}}_{\nu_{\iota}} = {\O}$ we must have $\gamma =
\nu_{\iota}$ here. But then
$$X \in E^{M^{\ast}}_{\nu_{\iota}} \longleftrightarrow \kappa_{\iota}
\in j (X) \longleftrightarrow \kappa_{\iota} \in i \circ t (X)
\longleftrightarrow \kappa_{\iota} \in t (X) \longleftrightarrow X
\in E^N_{\nu_{\iota}}.$$

Hence $E^N_{\nu_{\iota}} = E^{M^{\ast}}_{\nu_{\iota}}$ which is our
{\tmem{Claim 2}}. \mbox{}\hfill  Q.E.D. (2)\\

At the $\theta$'th stage therefore, $M_{\theta}$ is an end extension
of $\overline{K} .$ For $n < \omega$, let $i_n$ be the least stage
$i$ where $\kappa_i \geq \beta_n$ if such an $i$ exists, otherwise
set $i_n = \theta$. Let $k_0 < \omega$ be the least $k$ such that
any truncations performed on the $K$ iteration have been performed
before stage $i_k$. We may also assume that from this point
$i_{k_0}$ on then, that the least $m>0$ with
$\omega\rho^m_{M_{\iota}}< \kappa_{\iota}$ is fixed for all $\iota
\geq i_{k_0}$; for this $m$ then, we set $\rho =
\omega\rho^m_{M_{\iota}}$ for any $\iota \geq i_{k_0}$, and we shall
have that any $M_i$ is sound above $\kappa_i$ for $\iota \geq
i_{k_0}$, and thus that $M_\iota = \widetilde{h^{}}^m_{M_\iota}
(\kappa_\iota \cup \{p_{M_\iota} \})$. Further by choice of $m$ note
that for $n>k_0$, $\rho^{m - 1}_{M_{i_n}} > \kappa_{i_n}\geq
\beta_n$. As we have in the iteration that $\pi_{i, j} (\langle
d_{M_{i}},p_{M_i}\rangle) = \langle d_{M_{j}},p_{M_j}\rangle$, and
parameters are finite sequences, we may further assume that $k_0$
has also been chosen sufficiently large so that for any $n \geq
k_0$: (i) $d_{M_{i_n}},p_{M_{i_n}} \cap [\beta_{n - 1}, \beta_n) =
{\O}$, (ii) $k_0$ is itself
relevant.\\
%Note that  (i)
%is unproblematic due to the finiteness of the  parameters $d$ and $p$.\\

(3) {\tmem{Suppose }}$\langle \kappa_i |i < i_n \rangle$ {\tmem{is unbounded
in}} $\beta_n$, {\tmem{where $n$ is relevant}}. {\tmem{Then for no $i_0 < i_n$
do we have $\pi_{i_0, i} (\kappa_{i_0}) = \kappa_i$}} {\tmem{for unboundedly
many}} $\kappa_i < \kappa_{i_n} .$

Proof: If the conclusion failed then we should have $\pi_{i, j}
(\kappa_{i_{}}) = \kappa_j$ for an $\omega_1$-sub-sequence of the
 sequence of critical points $\langle \kappa_i |i < i_n \rangle$; let us
choose such an $\omega_1$-sub-sequence, and call the set of its
elements $\overline{D}$
%with $\overline{D}$ then a subset of $ \beta_n$, say,
with the choice of $\overline{D}$ ensuring that $\overline{D}$ is
closed below $\beta_n$. These are all inaccessible in $\overline{K}
.$ Applying $\pi$, if we set $D = \pi$``$\overline{D}$, then we have
that $D$ is a cub set of order type $\omega_1$ below
$\beta_n^{\ast}$ of $K$-inaccessibles. Note that $\pi$ is continuous
on $\overline{D}$ since $\overline{H}$ is correct about whether any
ordinal $\alpha$ has cofinality $\omega$ or not, since all the
$\beta_n (n < \omega)$ have uncountable cofinality; hence, easily,
if $\kappa_{\lambda}$ is a limit point of $\overline{D}$, then it
has cofinality $\omega$ in $\overline{H}$. If $f : \omega
\longrightarrow \kappa_{\lambda}$ is the least function in
$\overline{H}$ witnessing this, then $\pi (\kappa_{\lambda}) = \pi
(\sup \{f$``$\omega\}) = \sup \{\pi (f (n)) | n \in \omega\}$. (We
are using here that the MS property is formulated using {\tmem{all}}
the $\aleph_n$'s and not just a subsequence.) \ But $n$ is relevant
so $\beta_n^{\ast}$ is singular in $K,$ but of uncountable
cofinality. Thus the closed $C_{\beta_n^{\ast}}$ sequence of $K$ of
$K$-singular ordinals, has non-empty intersection with $D$, which is
absurd.\\

(4) \ {\tmem{If $n \geq k_0$ is relevant then}} $\beta_n$ {\tmem{is}}
$\Sigma_1^{(m)}$ {\tmem{singularised over $M_{i_n}$}} {\tmem{and the latter is
sound above}} $\beta_n .$

Proof: The last conjunct follows from the definition of $i_n : M_{i_n}$ is
sound above $\text{$\widetilde{\kappa} =_{\tmop{df}}$ sup$\langle \kappa_i |i
< i_n \rangle$}$ Divide into the two cases of $\tilde{\kappa} < \beta_n$ or
$\tilde{\kappa} = \beta_{n.} $ In the first case then $M_{i_n} =
\widetilde{h^{}}^m_{M_{i_n}} ( \widetilde{\kappa} \cup \{p_{M_{i_n}} \})$ and
hence $\beta_n$ is so singularised over $M_{i_n}$; in the second case \ take
$\delta < \beta_n$, $\delta \geq \omega \rho_{M_{i_n}}$. Take $i$ minimal such
that $\kappa_i \in [\delta, \beta_n)$. Then $M_i =
\widetilde{h^{}}^m_{M_{i_{}}} (\delta \cup \{p_{M_{i_{}}} \})$ and in
particular $\kappa_i \in \text{$\widetilde{h^{}}^m_{M_{i_{}}} (\delta \cup
\{p_{M_{i_{}}} \})$}$. By (3) take $\gamma < \beta_n$ such that whenever
$\kappa_j \in (\gamma, \beta_n)$ then $\kappa_j \neq \pi_{i j} (\kappa_i)$,
and take some index $j$ such that $\kappa_j \in (\gamma, \beta_n)$. By
elementarity, $\pi_{i j} (\kappa_i) \in \widetilde{h^{}}^m_{M_{j_{}}} (\delta
\cup \{p_{M_{j_{}}} \})$. Since $\kappa_j > \pi_{i j} (\kappa_i)$, the point
$\pi_{i j} (\kappa_i)$ is not moved in the further iteration past stage $j$,
and so $\pi_{i j} (\kappa_i) \in \widetilde{h^{}}^m_{M_{i_n}} (\delta \cup
\{p_{M_{i_n}} \})$. We thus have that
$$(\ast) \quad \rho > \alpha_{\beta_n} =_{\tmop{df}} \max \{\alpha
\mid \sup(\widetilde{h^{}}^m_{M_{i_n}} (\alpha \cup \{p_{M_{i_n}}
\}) \cap \beta_n) = \alpha\}.$$
 But now there must be some $\gamma <
\beta_n$ with sup($\widetilde{h^{}}^m_{M_{i_n}} (\gamma \cup
\{p_{M_{i_n}} \}) \cap \beta_n) = \beta_n$. Because if this failed
we could choose a sequence
$$\gamma_0 = \rho, \gamma_{i + 1} =
\sup(\widetilde{h^{}}^m_{M_{i_n}} (\gamma_i \cup \{p_{M_{i_n}} \})
\cap \beta_n) < \beta_n, \mbox{ and take }\gamma = \sup_i
\gamma_i.$$ As $\tmop{cf} (\beta_n) > \omega$, $\gamma < \beta_n$.
However we have then that $$\gamma =
sup(\widetilde{h^{}}^m_{M_{i_n}} (\gamma \cup \{p_{M_{i_n}} \}) \cap
\beta_n) < \beta_n $$ and simultaneously $\gamma >
\alpha_{\beta_n}$. Contradiction!
(4) is thus proven. \mbox{}\hfill  Q.E.D.(4) \\

(5) {\tmem{If $n$ is relevant, then in the notation of \tmtextup{(4)}, if $m >
1$ then for no smaller $m' < m$ is $\beta_n$ $\Sigma_1^{(m' - 1)}$
singularised over}} $M_{i_n}$.

Proof: \ Just note that as $\rho^{m' - 1}_{M_{i_n}} \geq \rho^{m -
1}_{M_{i_n}} > \beta_n$, any purported $\Sigma_1^{(m' -
1)}$-singularisation over $M_{i_n}$ yields a cofinalising function
{\tmem{in}} $M_{i_n}$. This is absurd as $\beta_n$ is regular in
$M_{i_n}$. \ \ \mbox{}\hfill  Q.E.D.(5) \\

We thus have, by (4), that for relevant $n,$ $s_n =_{\tmop{df}}
\langle \beta_n, M_{i_n} \rangle \in S^+ .$ We therefore have
$C_{s_n}$ sequences associated to such $s_n$ as in the Global
$\LBox$ proof of the previous section.\\

(6){\tmem{ For relevant $n \geq k_0,$ we have}} $\tmop{ot} (C_{s_n})
\leq \widetilde{\beta}$ {\tmem{where }}$\widetilde{\beta}$ {\tmem{is
the least p.r.closed ordinal above}} $\beta_{k_0}$.

Proof: Set $i = i_{k_0}$; $j = i_n$. Then by the usual property of ultrapowers
\ $\pi_{i, j}$``$\omega \rho^{m - 1}_{M_i}$ \ is cofinal in $\omega \rho^{m -
1}_{M_j}$.

Set $s = s_{k_0}$ and let $\delta = \delta_{k_0}$ be least such that
$\lambda (f_{(\delta, 0, s)}) = \beta_{k_0} (= \nu_s)$ where
$f_{(\delta, 0, s)} \Longrightarrow s.$ Then $\delta < \beta_{k_0}$.
Let $Y =_{\tmop{df}} \pi_{i, j} \text{``} \tmop{ran} (
f^{\ast}_{(\delta, 0, s_{_{}})})$. As $\tmop{ran} (
f^{\ast}_{(\delta, 0, s_{})})$ is a $\Sigma_1^{(m - 1)}$ hull in
$M_{s_{}} (= M_i)$ we have that $Y$ is a $\Sigma_1^{(m - 1)}$ hull
in $M_j (= M_{s_n}) .$ We note that $\alpha_{s_{}}, \alpha_{s_n}$
(in the sense of Definition \ref{squareobjects} f)) are below $\rho$
by $(\ast)$ of (4). \ Consequently if we define $\widetilde{Y}
=_{\tmop{df}} \tmop{ran} (f^{\ast}_{(\beta_{k_0} + 1, 0, s_n)})$
then $\widetilde{Y}$ is a $\Sigma_1^{(m - 1)}$ hull of $M_j$.
However $\widetilde{Y} \supseteq Y$, as $\pi_{i, j} (p_s, d_s) =
p_{s_n}, d_{s_n}$, $\pi_{i, j}$ is $\Sigma_1^{(m - 1)}$-preserving,
and $\pi_{i, j} \upharpoonright \beta_{k_0} = \tmop{id}$. (We need
Lemma \ref{dpreserve} here on the preservation of the $d_s$
parameters under iteration.)

By choice of $\delta$ and Lemma \ref{suprema} $\rho (f_{(\delta, 0,
s)}) = \omega \rho_s$. Hence $Y$ is cofinal in $\omega \rho_{s_n}$.
However then $\widetilde{Y}$ is also so cofinal. That is $\rho
(f_{(\beta_{k_0} + 1, 0, s_n)}) = \omega \rho_{s_n}$ which again by
Lemma \ref{suprema} implies $\lambda (f_{(\beta_{k_0} + 1, 0, s_n)})
= \nu_{s_n} = \beta_n$. By Lemma \ref{ordertype} this implies
$\tmop{ot} (C_{s_n}) \leq \widetilde{\beta}$.
 \mbox{}\hfill  Q.E.D.(6)\\

For relevant $n$ we form the ``lift-up'' map $\pi_n^{\ast} : M_{i_n}
\longrightarrow M^{\ast}_n$ which extends $\pi \upharpoonright (
\overline{K} | \beta_n^+$) (where $\beta_n^+ =
(\beta_n^+)^{\overline{K}}$). We obtain the structure $M^{\ast}_n$
and the map $\pi_n^{\ast}$ as a pseudo-ultrapower.\\

(7)(a){\tmem{ For relevant $n$, $\pi_n^{\ast}$ is $\Sigma_1^{(m -
1)}$-preserving, and $\beta_n^{\ast}$ is $\Sigma_1^{(m - 1)}$-singularised
over $M^{\ast}_n$; further, if $m > 1$, then for no smaller $m' < m,$ is
$\beta_n^{\ast}$ is $\Sigma_1^{(m' - 1)}$-singularised over}} $M^{\ast}_n$.

\ \ \ (b) $M^{\ast}_n$ \ \tmtextit{is normally iterable above}
$\beta_n^{\ast} .$

Proof : (a) The Pseudo-Ultrapower Theorem \ref{kpseudo} (with $k = m
- 1)$ shows the right degree of elementarity of $\pi_n^{\ast}$,
\tmtextit{i.e.}{\hspace{0.25em}} that it is $\Sigma_0^{(m - 1)}$
preserving. It further states that the map is cofinal and thus
$\Sigma_1^{(m - 1)}$-preserving, and that it yields that
$\beta_n^{\ast}$ is $\Sigma_1^{(m - 1)}$-singularised over
$M^{\ast}_n$, whilst $\beta_n^{\ast}$ is $\Sigma_1^{(m' -
1)}$-regular over $M^{\ast}_n$ for any $m' < m$ (if $m > 1)$. For
(b) this is a standard argument about canonical extensions defined
from pseudo-ultrapowers using the fact that $\tmop{cf} ( \beta_n^{})
= \tmop{cf} (( \beta_n^{})^{+ M_n}) > \omega$. \ (Note $\tmop{cf}
(\beta_n^+)^{\overline{K}} = \omega_1$, either because
$(\beta_n^+)^{\overline{K}} = (\beta_n^+)^{\overline{H}} = \beta_{n
+ 1}$, or otherwise by applying the Weak Covering Lemma inside
$\overline{H} : \overline{H} \models$``$\tmop{cf}
(\beta_n^+)^{\overline{K}} = \beta_n$'' , and $\beta_n$ of course
has cofinality $\omega_1$.) \ See {\cite{Z02}} \ Lemma 5.6.5.
\mbox{}\hfill  Q.E.D.(7)\\

(8) $M^{\ast}_n$ \ \tmtextit{is an initial segment of} $K.$ \

Proof: Note that by construction $M^{\ast}_n \upharpoonright
\beta_n^{\ast} = K \upharpoonright \beta_n^{\ast} .$ By 7(i)
$\rho_{M^{\ast}_n}^m \leq \beta_n^{\ast}$; again the
pseudo-ultrapower construction shows $M^{\ast}_n$ is sound above
$\beta_n^{\ast}$ and hence is coded by a $\Sigma_1^{(m - 1)}
(M^{\ast}_n)$ subset of $\beta_n^{\ast}$, $A$ say. An elementary
iteration and comparison argument shows that, when $K$ is compared
with $M^{\ast}_n$, to models $N_{\eta}, M^{\ast}_{\eta}$ then $A$ is
$\Sigma_1^{(m - 1)}$ definable over $N_{\eta}$, and thus is in $K$
itself. As $M^{\ast}_n$ is a mouse in $K$, its soundness above
$\beta_n^{\ast}$ implies that after any supposedly necessary
coiteration, we must have $N_{\eta} = M^{\ast}_{\eta} $ and hence
$\tmop{core} (N_1) = \tmop{core} ( N_{\eta}) = \tmop{core}
(M^{\ast}_{\eta}) = M^{\ast}_n$. Hence $M^{\ast}_n$ is an initial
segment of $K$. \mbox{}\hfill  Q.E.D.(8)\\

(9)(a) $s^{\ast}_n = \langle  \beta_n^{\ast}, M^{\ast}_n \rangle \in S^+$;

\ \ (b) $M^{\ast}_n$ \ \tmtextit{is the assigned $K$-singularising structure
for $\beta_n^{\ast}$; hence} \tmtextit{in $K$, $C_{\beta_n^{\ast}}$ is defined
over $M^{\ast}_n$, that is} $C_{\beta_n^{\ast}} =_{\tmop{df}} C_{s^{\ast}_n}$.

Proof: For (a), by (7)(a) $M_n^{\ast}$ singularises appropriately,
it is sound above $\beta_n^{\ast}$, and by (8) it is a mouse. For
(b) we have shown that $M^{\ast}_n$ is an initial segment of $K,$
and thus conforms to the definition of the segment chosen to define
the canonical $C$-sequence associated to $\beta_n^{\ast}$ in $K$. \mbox{}\hfill Q.E.D.(9)\\
We thus conclude:\\

(10) \tmtextit{For relevant} $n \geq k_0$ $\tmop{ot} (C_{\beta_n^{\ast}}) \leq
\pi ( \widetilde{\beta}) < \pi (\beta_{k_0 + 1}) = \omega_{k_0 + 1}$.

Proof: By (6) $\tmop{ot} (C_{s_n}) \leq \widetilde{\beta}$ because $
\tilde{h}_{s_n} (\beta_{k_0} + 1, p (s_n)) $ is cofinal in $ \omega
\rho_{s_n} = \omega \rho^{m - 1}_{M_{i_n}}$. Set $\beta' =
\pi^{\ast}_n (\beta_{k_0} + 1) .$By the $\Sigma_1^{(m -
1)}$-elementarity of $\pi^{\ast}_n$ we shall have that
$\pi^{\ast}_n$``$\tilde{h}_{s_n} (\beta_{k_0} + 1, p (s_n)_{})
\subset \tilde{h}_{s^{\ast}_n} (\beta', p (s^{\ast}_n))$. As
$\pi^{\ast}_n \upharpoonright \omega \rho_{s_n}$ is cofinal into
$\omega \rho_{s^{\ast}_n}$, we deduce that $\rho (f_{(\beta', 0,
s_n^{\ast})}) = \text{$\omega \rho_{s^{\ast}_n}$.}$ By Lemma
\ref{suprema} this ensures that $\lambda (f_{(\beta', 0,
s_n^{\ast})}) = \nu_{s^{\ast}_n} = \beta^{\ast}_n$. This in turn
implies by Lemma \ref{ordertype}, that $\tmop{ot} (C_{s_n^{\ast}})$
is less than the least p.r. closed ordinal greater than $\beta'$.
However $\pi^{\ast}_n \upharpoonright \beta_n^+$ extends $\pi^{}
\upharpoonright \beta_n^+$, and thus this ordinal is $\pi^{} (
\tilde{\beta}) .$ The final inequality is clear.\\

Now (10) yields the final contradiction, as for relevant $n$, $S_n$
was chosen to consist of points $\beta$ where $\tmop{ot} (C_{\beta})
\geq \omega_{n - 3}$, whereas (10) establishes an ultimate bound on
such order types of $\omega_{k_0 + 1}$. \mbox{}\hfill Q.E.D.(Theorem
\ref{0-sharp})\\

We finally remark that the Corollary 1.5 is immediate: after
shifting our attention to cardinals above $\aleph_k$ we still use
the same hypothesis concerning sufficient singular ordinals in $K$
in order to establish the stationarity of the $T_n$ now contained in
$\mathrm{Cof}(\omega_k)$. We take $\omega_k \subseteq X$  and now
the analogues of the ordinals $\beta_n$ have cofinality $\omega_k$;
 % Weak Covering applied inside $H$ shows that
  $H$ is correct
about the cofinality of any ordinal whose $V$-cofinality is less
than $\omega_k$. The proof of (3) now shows that there is no closed
$\omega_k$ subsequence of critical points $\kappa_i$ unbounded in
such a $\beta_n$, as the map $\pi$ is now continuous at points of
cofinality less than $\omega_k$. Hence we can deduce (4) that the
iterates are indeed singularizing structures for the $\beta_n$ as
required.

\end{document}